\DeclareMathSymbol\HH 0{AMSb}{`H}
\DeclareMathSymbol\I  0{AMSb}{`I}
\DeclareMathSymbol\R  0{AMSb}{`R}
\def\q{\mathbf{q}}
\theoremstyle{plain}
\newtheorem{theorem}{Theorem}[section]
\newtheorem{lemma}[theorem]{Lemma}
\newtheorem{proposition}[theorem]{Proposition}
\newtheorem{corollary}[theorem]{Corollary}
\newtheorem{definition}[theorem]{Definition}
\theoremstyle{definition}
\newtheorem{question}{Question}[section]
\theoremstyle{remark}
\newtheorem{remark}{Remark}
\newtheorem{fact}{Fact}
\newtheorem{claim}[remark]{Claim}
\numberwithin{equation}{section}
\DeclareMathOperator{\dom}{dom}
\DeclareMathOperator{\supp}{supp}
\DeclareMathOperator{\Fn}{\operatorname{Fn}}
\DeclareMathOperator{\val}{\operatorname{val}}
\title
{S-spaces and large continuum}
\author[A. Dow]{Alan Dow}
\address{University of North Carolina at Charlotte, 
Charlotte, NC 28223}
\email{adow@uncc.edu}
\author[S. Shelah]{Saharon Shelah}
\address{Department of Mathematics, Rutgers University, Hill Center,
 Piscataway, 
 New Jersey, U.S.A. 08854-8019}
\curraddr{Institute of Mathematics\\Hebrew University\\
Givat Ram, Jerusalem 91904, Israel}
\email{shelah@math.rutgers.edu}
\date{\today}
\thanks{
The research of the second  author was
 supported by
   the United States-Israel Binational Science Foundation (BSF Grant
   no. 2010405), and by the 
 NSF grant No. NSF-DMS 1101597. This is manuscript number F2103.}
\keywords{S-spaces, forcing    
}
\subjclass{54A35, 03E35 }
\begin{document}
\begin{abstract}
 We prove that it is consistent with large values of the continuum
 that there are no S-spaces. We also show that we can
 also have that compact separable spaces
 of countable tightness have cardinality at most the continuum.
 \end{abstract}
\maketitle

\bibliographystyle{plain}

\section{Introduction}
 
 An S-space is a regular hereditarily separable space
 that is not Lindel\"of. If an S-space exists it can be assumed
 to be a topology on $\omega_1$ in which initial segments
 are open
 \cite{SandL}. The continuum hypothesis implies that
 S-spaces exist \cite{HJ73}
 and the existence of a Souslin tree
 implies that S-spaces exist \cite{RudinS}.
  Therefore it is consistent
 with any value of $\mathfrak c$ that S-spaces exist.
  Todorcevic \cite{stevoSspace} proved
  the major result that it is 
  consistent with $\mathfrak c=\aleph_2$ that
  there are no S-spaces. He also remarks that
 this   follows from PFA.  
  We prove that it is consistent with arbitrary large values
  of $\mathfrak c$ that there are no S-spaces. 
  Our method adapts the approach used 
   in \cite{stevoSspace} and incorporates
   ideas, such as \textit{the Cohen real trick} in Lemma \ref{Cohengeneric},
    first introduced in  \cites{Avraham,ARS}. 
   
   The outline of the proof (of Theorem \ref{mainthm})
   is that we choose a
   regular cardinal $\kappa$ in a model of GCH. 
   We construct a preparatory mixed 
   support iteration sequence
    $\langle P_\alpha, \dot Q_\beta : \alpha \leq\kappa,
     \ \beta <\kappa\rangle$ consisting of iterands
     that are Cohen posets and 
     cardinal preserving subposets
     of Jensen's poset for adding a generic cub.
Following methods first introduced in \cite{Mitchell},
 but more closely those of \cite{stevoSspace}, 
 the poset $P_\kappa$ is shown to be cardinal 
  preserving.
     We then extend the iteration sequence to one
     of length $\kappa+\kappa$ with iterands 
     that are ccc posets of cardinality 
     less than $\kappa$.
     These iterands are the same as those used
     in \cite{stevoSspace}. For cofinally many $\beta<\kappa$, 
      $\dot Q_{\kappa+\beta}$ is constructed so as
     to add an uncountable discrete subset to
     a $P_\beta$-name of an S-space. The bookkeeping
     is routine to ensure that $P_{\kappa+\kappa}$ forces
     there are no S-spaces. The challenging part of
     the proof is to prove that these $\dot Q_\beta$ 
     ($\kappa \leq \beta < \kappa+\kappa$)
     are ccc in this new setting. In the final section,
      we use similar techniques to produce a model
      in which compact separable spaces of countable tightness
       have cardinality at most $\mathfrak c$.

 \section{Constructing $P_{\kappa}$}
  
  Throughout the paper we assume that GCH holds and
  that $\kappa>\aleph_2$ is a regular uncountable cardinal.
 
\begin{definition}
 The Jensen poset $\mathscr J$ is the set of pairs
  $(a,A)$ where $a$ is a countable closed subset of $\omega_1$
  and $A\supset a $ is an uncountable closed subset of $\omega_1$. 
  The condition   $(a,A)$ is an extension of $(b,B)\in \mathscr J$ providing
 $a$ is an end-extension of   $b$   and  $A\subset B $.
 \end{definition}

   We use $\mathbf{E}$ to denote the set $\{ \lambda+2k :
    \lambda <\kappa \ \mbox{a limit} , k\in\omega\}$. 
    We also choose a family  
  $\mathscr I =
 \{ I_\gamma : \gamma\in \mathbf{E}\}$  
 of subsets of $\kappa$ such that, for each $\mu<\gamma\in \mathbf{E}$
   
\begin{enumerate}
  \item 
  $   \gamma\in  I_\gamma\subset \gamma+1$
   and $|I_\gamma| \leq \aleph_1$,
\item if $\gamma<\omega_2$, then $I_\gamma = \gamma+1$,
 \item if $\mu\in I_\gamma\cap\mathbf{E}$, then 
 $I_\mu\subset I_\gamma$ 
 \item for all $I\in [\kappa]^{\aleph_1}$, 
 the set $\{ \gamma :  I \subset I_\gamma\}$ is
  unbounded in $\kappa$.
\end{enumerate}

 \noindent Say that a set $I\subset \kappa$ is $\mathscr I$-saturated
 if it satisfies that $I_\mu\subset I$ for all $\mu\in I\cap\mathbf{E}$.
 Of course, each $I_\gamma\in \mathscr I$ is 
  $\mathscr I$-saturated.
 
\begin{definition}

\textbf{A.}\ 
 We define a mixed\label{conditions}
  support iteration sequence $\langle
  P_{\alpha },\dot Q_{\beta } : \alpha \leq \kappa,\ \ 
  \beta <\kappa\rangle$:
\begin{enumerate}
 \item $P_{ 0}=\emptyset$,
 \item $p\in P_{\alpha }$ is a function with $\dom(p)$,
 a countable subset of $  \alpha$, such that
  $\dom(p)\cap \mathbf{E}$ is finite, 
  \item for all $p\in P_{\alpha }$ and $\beta\in \dom(p)$,
    $p(\beta)$ is a $P_\beta$-name forced by $1_{P_\beta}$ 
    to be an element of $\dot Q_\beta$,
  \item the support of a $P_{\alpha }$-name
   $\tau$, $\supp(\tau)$, is defined,
  by recursion on $\alpha$ 
   to
  be the union of the
  set $\{  \supp(\sigma) \cup \dom(q) : ( \sigma,q)\in \tau\}$,
  \item for $\alpha\in \mathbf{E}$, $\dot Q_{\alpha}$ is 
  the trivial
  $P_{\alpha}$-name for $\mathcal C_{\omega_1} = \Fn(\omega_1,2)$
  (i.e. each element of $\dot Q_{\alpha}$ has empty support),
  \item for $\alpha\in \mathbf{E}$, $\dot Q_{\alpha+1 }$ is the 
  subposet of the standard $P_{\alpha+1}$-name for $\mathscr J$ 
  consisting of the $P_{\alpha+1}$-names that are forced to
  have  the form $(\dot a,\dot A)$  
   where  $\supp(\dot a) \subset \mathbf{E}\cap I_\alpha$,
  $ \supp(\dot A)\subset \alpha$,
   and $1_{P_\alpha+1} $
   forces that $(\dot a, \dot A)\in \mathcal J$.
 $\dot Q_{\alpha+1}$ is chosen so as to be sufficiently rich in names
 in the sense that if $p\in P_{\alpha+1}$ and 
 $\dot q$ is a $P_{\alpha+1}$-name such 
 that $p\Vdash_{P_\alpha}\dot q\in \dot Q_{\alpha+1}$, 
 then there is a $\dot q_1\in \dot Q_{\alpha+1}$
 such that $p\Vdash \dot q = \dot q_1$.
\end{enumerate}
\textbf{B.}\ 
For each $\alpha\in \mathbf{E}$, we let $\dot C_\alpha$ denote
the $P_{\alpha+2}$-name of
the generic subset of $\omega_1$  added by $\dot Q_{\alpha+1}$.  
 \end{definition}

\begin{remark}
 Since we defined   the family $\mathscr I$ to have
 the property that $I_\gamma  = \gamma +1$ for
  all $\gamma\in \omega_2\cap \mathbf{E}$,
  it follows that
  $P_{\omega_2}$ is isomorphic
  to that used in \cite{stevoSspace}. It also
  follows that for all $\beta\in\omega_2\cap \mathbf{E}$,
   $P_{\beta+1}\Vdash \dot Q_{\beta+1}$ is countably closed. 
   We necessarily lose this property for $\omega_2
   \leq \beta$ for any family $\mathscr I$ satisfying
   our properties (1)-(4). Nevertheless,  our 
   development of the properties of $P_\kappa$
   will closely follow that of \cite{stevoSspace}.
\end{remark}

\begin{remark}  We prove in Lemma \ref{itiscub} that,
for each $\alpha\in \mathbf{E}$, 
 $\dot C_\alpha$   is forced, as hoped,
 to be a cub. However, 
even though, for $\beta\geq\omega_2$,
  $P_{\beta+1}$ 
does not force that $\dot Q_{\beta+1}$ is countably 
closed,  we make note of subsets of
the iteration sequence that have special properties, such as
in Lemma \ref{desc}.
\end{remark}

 For any ordered pair $(a,b)$, 
 let $\pi_0(  (  a,b) ) =   a$ and $\pi_1((a,b)) = b$. 
 For convenience, for an element $v$ of $V$ and any $\alpha<\kappa$,
  we identify the usual trivial $P_\alpha$-name for $v$ with $v$ itself.
  In particular, if $s\in\mathcal C_{\omega_1}$ and $\alpha\in 
  \mathbf{E}$, then $s\in \dot Q_{\alpha}$. Similarly, 
  if $(\dot a,\dot A)$ is a pair of the form specified in 
  Definition \ref{conditions}(6), then again
    $(\dot a, \dot A)$ can be regarded as  an element of $\dot Q_{\alpha+1}$.
 We will say that a $P$-name $\tau$ for a subset of an
    ordinal $\lambda$ and poset $P$ is canonical if
    it is a subset of $\lambda\times P$ (and optionally,
    if  $\{ p : (\alpha,p) \in \tau\}$ is an antichain
    for all $\alpha\in\lambda$). 
      Let $\mathscr D_\beta$ denote the set of 
    canonical $P_{\beta}$-names of closed and unbounded
    subsets of $\omega_1$.

\begin{definition} For each $\alpha <\kappa$, let 
 $P_\alpha'$  denote the 
subset of $P_\alpha$, where $p\in P_\alpha'$ providing
for all $\beta\in \dom(p)\cap \mathbf{E}$,  $p(\beta)$ is,
literally,  an element of $\mathcal C_{\omega_1}$.
\end{definition}

\begin{lemma} For all $\alpha\leq \kappa$,  $P_\alpha'$ is
 a dense subset of $P_\alpha$.
\end{lemma}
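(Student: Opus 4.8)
The plan is to prove density by induction on $\alpha \leq \kappa$, exploiting the ``sufficiently rich in names'' clause of Definition \ref{conditions}(6) together with the fact that $\dot Q_\alpha$ for $\alpha \in \mathbf{E}$ is the \emph{trivial} name for $\mathcal C_{\omega_1}$. The base case $\alpha = 0$ is vacuous, and at limit stages one uses that both $P_\alpha$ and $P_\alpha'$ are taken with supports that are countable and have finite intersection with $\mathbf{E}$, so a condition and its coordinatewise modifications live at a bounded stage already handled; more precisely, given $p \in P_\alpha$ with $\alpha$ a limit, each coordinate $p(\beta)$ is a $P_\beta$-name with $\beta < \alpha$, so one can work below $\sup\dom(p) \leq \alpha$. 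Thus the real content is the successor step.

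For the successor step, suppose $P_\beta'$ is dense in $P_\beta$ for all $\beta < \alpha + 1$ (equivalently $\beta \leq \alpha$), and let $p \in P_{\alpha+1}$; we must find $p' \in P_{\alpha+1}'$ with $p' \leq p$. The idea is to handle the top coordinate $\alpha$ first (if $\alpha \in \dom(p)$), then recurse. If $\alpha \notin \mathbf{E}$ there is nothing to adjust at coordinate $\alpha$ beyond what induction already gives for $p \restriction \alpha$ (and reattaching $p(\alpha)$). If $\alpha \in \mathbf{E}$ and $\alpha \in \dom(p)$, then $p(\alpha)$ is a $P_\alpha$-name forced to lie in $\dot Q_\alpha = \mathcal C_{\omega_1}$. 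Since $\mathcal C_{\omega_1} = \Fn(\omega_1,2)$ is a forcing notion in the ground model with the property that below any condition the name of a generic element is decided on a dense set, I would pass to $p \restriction \alpha \in P_\alpha$, use the inductive hypothesis to find $q \leq p \restriction \alpha$ in $P_\alpha'$, and then further extend $q$ to decide $p(\alpha)$ to be a literal element $s \in \mathcal C_{\omega_1}$: because $\mathcal C_{\omega_1}$ is a set in $V$ and $1_{P_\alpha} \Vdash p(\alpha) \in \mathcal C_{\omega_1}$, there is $q' \leq q$ in $P_\alpha$ and $s \in \mathcal C_{\omega_1}$ with $q' \Vdash p(\alpha) = s$; applying the inductive density of $P_\alpha'$ once more to $q'$ (or observing the modification keeps us in $P_\alpha'$ since we only shrank the $\mathbf{E}$-free part), we obtain $q'' \in P_\alpha'$, and set $p' = q'' {}^\frown \langle s\rangle$ (i.e. $p' \restriction \alpha = q''$ and $p'(\alpha) = s$), so that $p'(\alpha)$ is literally in $\mathcal C_{\omega_1}$ as required for membership in $P_{\alpha+1}'$, and $p' \leq p$.

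For the case $\alpha \in \mathbf{E}$ but we need to be careful that $\dom(p) \cap \mathbf{E}$ is finite: rather than fixing up only the top coordinate, I would enumerate the finitely many $\beta_0 < \beta_1 < \cdots < \beta_n$ in $\dom(p) \cap \mathbf{E}$ and perform the above name-deciding extension at each, working downward from $\beta_n$, at each stage using the inductive density of $P_{\beta_i}'$ inside $P_{\beta_i}$ to absorb the previous modifications and then extending to decide $p(\beta_i)$ as a literal finite partial function into $2$. Finiteness of $\dom(p)\cap\mathbf{E}$ ensures this process terminates. The successor step for $\alpha+1$ with $\alpha+1 \in \mathbf{E}$ is the same as for $\alpha \notin \mathbf{E}$ at coordinate $\alpha$ since $\alpha + 1 \in \mathbf{E}$ forces $\alpha \notin \mathbf{E}$ (as $\mathbf{E}$ consists of even ordinals of the form $\lambda + 2k$, so no two consecutive ordinals are in $\mathbf{E}$). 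The main obstacle I anticipate is purely bookkeeping: making sure that when we extend a condition in $P_{\beta_i}$ to decide a name at coordinate $\beta_i$, the extension does not re-complicate a coordinate $\beta_j$ with $j > i$ that we have already made literal — this is handled because those higher coordinates are now honest ground-model elements $s \in \mathcal C_{\omega_1}$ and extending below in $P_{\beta_i}$ (with $\beta_i < \beta_j$) cannot touch coordinate $\beta_j$ at all, as $\beta_j \notin \dom$ of anything in $P_{\beta_i}$. Hence the only genuine input is the ``sufficiently rich in names'' hypothesis (which is not even needed here since $\dot Q_\alpha$ for $\alpha\in\mathbf E$ is already the literal poset $\mathcal C_{\omega_1}$) and the standard fact that names for elements of a ground-model set are decided densely.
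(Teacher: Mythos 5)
Your proof is correct and follows essentially the same route as the paper's: at a successor $\beta+1$ with $\beta\in\mathbf{E}$ one extends $p\restriction\beta$ to decide the name $p(\beta)$ as a literal $s\in\mathcal C_{\omega_1}$ and then invokes the inductive density of $P_\beta'$, while at limits one exploits the finiteness of $\dom(p)\cap\mathbf{E}$ to confine the repair to a bounded initial segment and glue the untouched tail back on. The only slip is the gloss ``work below $\sup\dom(p)\leq\alpha$'' in the limit case --- $\dom(p)$ may well be cofinal in $\alpha$, so the bound must be taken on the finite set $\dom(p)\cap\mathbf{E}$ (as your preceding clause correctly indicates), not on all of $\dom(p)$.
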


\begin{proof}   Assume $\alpha \leq\kappa$ and that, by
induction, $P_\beta'$ is a dense subset of $P_\beta$
for all $\beta < \alpha$. Consider any $p\in P_\alpha$.
If $\alpha$ is a limit, choose any $\beta <\alpha$ such
  that $\dom(p)\cap \mathbf{E}\subset\beta$. Choose
  any $p'\in P_{\beta}'$ so that $p'<p\restriction \beta$. 
  We then have that $p'\cup p\restriction (\alpha\setminus\beta)$
  is a condition in $P_\alpha$ that is below $p$.
  
  Now let $\alpha = \beta+1$.
   If $\beta\in\mathbf{E}$,
   then choose $p'\in P_\beta'$ so that
   there is an $s\in \mathcal C_{\omega_1}$
   such that $p'\Vdash_{P_\beta}p(\beta) = s$.
   Then the desired extension of $p$ in 
    $P_\alpha'$ is $p'\cup \langle \beta,s\rangle$.
    Similarly, if  
   $\beta\notin\mathbf{E}$ and $p'\in P_{\beta}'$
    with $p'<p\restriction\beta$,
   then $p'\cup \langle \beta,p(\beta)\rangle\in P_\alpha'$. 
\end{proof}

\begin{proposition}
 If $p\in P_{\kappa}$\label{shrink}  
 then for every $I\subset \kappa$,
 $p\restriction I\in P_{\kappa}$ and $p\leq p\restriction I$.
 \end{proposition}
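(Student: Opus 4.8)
The plan is to prove, by induction on $\alpha\le\kappa$, the apparently stronger statement: for every $p\in P_\alpha$ and every $I\subseteq\kappa$, the restriction $p\restriction I$ (meaning the restriction of the function $p$ to $\dom(p)\cap I$) belongs to $P_\alpha$ and $p\le p\restriction I$. Taking $\alpha=\kappa$ gives the proposition, and since $\dom(p)\subseteq\alpha$ we may as well assume $I\subseteq\alpha$.

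The membership assertion is essentially immediate from Definition~\ref{conditions} and does not use the induction: $p\restriction I$ is obtained from $p$ by deleting some coordinates and leaving every surviving value untouched, so $\dom(p\restriction I)=\dom(p)\cap I$ is again a countable subset of $\alpha$ whose intersection with $\mathbf E$ is finite, and for each $\beta\in\dom(p\restriction I)$ the value $(p\restriction I)(\beta)=p(\beta)$ is still exactly the same $P_\beta$-name, which $1_{P_\beta}$ already forces to lie in $\dot Q_\beta$. The only clause that might look delicate is $\beta=\gamma+1$ with $\gamma\in\mathbf E$, where membership in $\dot Q_{\gamma+1}$ is a constraint on the name $p(\gamma+1)$ itself --- that it be forced to have the form $(\dot a,\dot A)$ with $\supp(\dot a)\subseteq\mathbf E\cap I_\gamma$, $\supp(\dot A)\subseteq\gamma$, and naming an element of $\mathscr J$ --- and this is a property of the name, hence is unaffected by passing from $p$ to $p\restriction I$.

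For the order, recall that $p\le q$ in $P_\alpha$ means $\dom(q)\subseteq\dom(p)$ together with $p\restriction\beta\Vdash_{P_\beta}p(\beta)\le_{\dot Q_\beta}q(\beta)$ for every $\beta\in\dom(q)$ (which, as usual, carries with it $p\restriction\beta\le_{P_\beta}q\restriction\beta$). Here $\dom(p\restriction I)=\dom(p)\cap I\subseteq\dom(p)$, and for each $\beta\in\dom(p\restriction I)$ we have $(p\restriction I)\restriction\beta=p\restriction(I\cap\beta)$, so the requirement $p\restriction\beta\le_{P_\beta}(p\restriction I)\restriction\beta$ is exactly the inductive hypothesis at the level $\beta<\alpha$, while $p\restriction\beta\Vdash p(\beta)\le_{\dot Q_\beta}(p\restriction I)(\beta)$ holds trivially because $(p\restriction I)(\beta)=p(\beta)$ and $1_{P_\beta}$ forces $\le_{\dot Q_\beta}$ to be reflexive. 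Running the induction in the usual way --- base case vacuous; at a successor $\alpha=\gamma+1$ split on whether $\gamma\in I$, appending the unchanged pair $(\gamma,p(\gamma))$ when it is; at a limit, note every coordinate of $p$ already lies below some $\gamma<\alpha$ and verify the two conditions coordinatewise --- completes the argument. There is no real obstacle here; the only reason to phrase this as an induction on $\alpha$ at all is the recursive clause $p\restriction\beta\le_{P_\beta}(p\restriction I)\restriction\beta$ concealed in the definition of the order.
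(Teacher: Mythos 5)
Your proof is correct, and it matches the paper's treatment: the paper states Proposition \ref{shrink} without proof, regarding it as immediate from Definition \ref{conditions}, and your induction is exactly the routine verification being left implicit (values are unchanged, so membership in $\dot Q_\beta$ and the forcing clause for the order are automatic, and the recursive clause $p\restriction\beta\le (p\restriction I)\restriction\beta$ is handled by the induction).
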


\begin{definition}
 For\label{moresubs} a subset $I\subset \kappa$ and $\alpha\leq \kappa$,
  let $P_{\alpha   }(I)$ denote the subset
   $\{ p\in P'_{\alpha } : \dom(p)\subset I\}$. 
   \end{definition}
   
     Recall that for   posets $(P,<_P)$
 and $(R,<_R)$, $P$ is a complete
 subposet  of $R$, i.e. $P\subset_c R$, providing
 
\begin{enumerate}
 \item $P\subset R$, $<_P = <_R\cap (P\times P)$,
 \item $\perp_P = \perp_R\cap (P\times P)$, where $\perp$ is the incompatibility relation,
  \item for each $r\in R$, the set of projections,
   $\mbox{proj}_P(r) $, is not empty, where
    $\mbox{proj}_P(r) =
    \{ p \in P : (\forall q\in P) (q<_P p \Rightarrow q\not\perp_R r)\}$.
 \end{enumerate}
 If $P\subset_c R$, then $R/P$ is often used to denote
 the $P$-name of the poset satisfying that
  $R \simeq P\star R/P$. In fact,
  $R/P$ can be defined so that simply
  if  $G\subset P$ is a generic
  filter, then $\val_G(R/P) = \{ 
   r\in R : \mbox{proj}_P(r)\cap G\neq\emptyset\}$  
   with the ordering inherited from $<_R$.
   With this view,  $\val_G(R/P) = G^+$ where,
    as is standard, $G^+ =\{ r\in R : 
    (\forall p\in G) r\not\perp p\}$.
    Of course it follows that for $\beta < \alpha \leq\kappa$,
     $P_\beta \subset_c P_\alpha$.

It is clear that  $P_{\alpha}(\mathbf{E})$ is isomorphic
to (the usual dense subset of) a finite support iteration of 
the Cohen poset $\mathcal C_{\omega_1}$.

\begin{proposition}
 For each $\alpha\leq\kappa$, the set\label{justCohen}
    $P_{\alpha }(\mathbf{E})\subset_c P_\alpha$  and  is ccc.
\end{proposition}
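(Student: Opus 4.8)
\emph{Proof strategy.} The plan is to dispatch the two assertions in turn, the \emph{ccc} one being essentially free. By the remark immediately preceding the Proposition, $P_\alpha(\mathbf E)$ is isomorphic to the standard dense subset of the finite support iteration of $\mathcal C_{\omega_1}=\Fn(\omega_1,2)$ over $\mathbf E\cap\alpha$ — which, since each $\dot Q_\beta$ (for $\beta\in\mathbf E$) has empty support, is really a finite support \emph{product} — hence to $\Fn\bigl(\omega_1\ttimes(\mathbf E\cap\alpha),2\bigr)$. A poset of the form $\Fn(S,2)$ is Knaster by a $\Delta$-system argument on its finite domains, so $P_\alpha(\mathbf E)$ is Knaster, in particular ccc. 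I would say this in a sentence and then concentrate on $P_\alpha(\mathbf E)\subset_c P_\alpha$.

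For $\subset_c$ I would check the three clauses of the definition. Clause (1) is immediate, since $P_\alpha(\mathbf E)$ was defined as a subset of $P'_\alpha\subseteq P_\alpha$ carrying the restriction of $<_{P_\alpha}$. I would first record a \emph{combinatorial criterion for the order}: if $x\in P'_\alpha$, $y\in P_\alpha(\mathbf E)$, and $\dom(x)\cap\mathbf E\supseteq\dom(y)$, then $x\le_{P_\alpha}y$ iff $x(\beta)\supseteq y(\beta)$ for every $\beta\in\dom(y)$; this holds because for $\beta\in\dom(y)\subseteq\mathbf E$ both $x(\beta)$ and $y(\beta)$ are literal elements of $\mathcal C_{\omega_1}$ (here $x\in P'_\alpha$ is what matters), so the relation $x(\beta)\le_{\dot Q_\beta}y(\beta)$ is a ground-model fact, forced by $x\restriction\beta$ exactly when it is true. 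The substance of the argument is then a single claim: \emph{if $r\in P'_\alpha$ and $q\in P_\alpha(\mathbf E)$ with $q\le_{P_\alpha}r\restriction\mathbf E$, then $q$ and $r$ are compatible in $P_\alpha$}. To prove it I would take $s=q\cup\bigl(r\restriction(\alpha\setminus\mathbf E)\bigr)$; since $\dom(q)\subseteq\mathbf E$ the two pieces have disjoint domains, so $s$ is a function, $\dom(s)$ is countable with $\dom(s)\cap\mathbf E=\dom(q)$ finite, and every value is a $P_\beta$-name forced into $\dot Q_\beta$, whence $s\in P_\alpha$ by Definition~\ref{conditions}. Then $s\le q$ is trivial, and $s\le r$ holds because $s$ agrees with $r$ off $\mathbf E$, while on $\dom(r)\cap\mathbf E$ — which lies inside $\dom(q)$ because $q\le r\restriction\mathbf E$ — the criterion gives $s(\beta)=q(\beta)\supseteq r(\beta)$, i.e. $1_{P_\beta}\Vdash s(\beta)\le_{\dot Q_\beta}r(\beta)$, and $\dom(s)\supseteq\dom(r)$.

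Granting the claim, clauses (2) and (3) would follow using the density of $P'_\alpha$ in $P_\alpha$ established above. For (2): if $p,q\in P_\alpha(\mathbf E)$ have a common extension $r_0\in P_\alpha$, refine to $r\le_{P_\alpha}r_0$ with $r\in P'_\alpha$; then $r\restriction\mathbf E\in P_\alpha(\mathbf E)$, and since $r\le p$, $r\le q$ and $\dom(p),\dom(q)\subseteq\mathbf E$, the criterion yields $r\restriction\mathbf E\le p$ and $r\restriction\mathbf E\le q$, so $p$ and $q$ are compatible in $P_\alpha(\mathbf E)$ (the reverse inclusion of incompatibility relations being trivial). For (3): given $r_0\in P_\alpha$, refine to $r\le_{P_\alpha}r_0$ with $r\in P'_\alpha$; the claim says every $q\le_{P_\alpha(\mathbf E)}r\restriction\mathbf E$ is compatible with $r$, hence with $r_0$, so $r\restriction\mathbf E\in\mbox{proj}_{P_\alpha(\mathbf E)}(r_0)$, which is therefore nonempty. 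I expect clause (3) — producing the projection — to be the main point; what makes the gluing in the claim legitimate is that the Cohen iterands have empty support, so a pure-Cohen condition constrains nothing on the remaining coordinates and can always be re-attached to $r\restriction(\alpha\setminus\mathbf E)$, and passing first into $P'_\alpha$ is what both makes $r\restriction\mathbf E$ a genuine element of $P_\alpha(\mathbf E)$ and renders the $\mathbf E$-coordinate comparisons generic-independent. No induction on $\alpha$ is needed: every step is checked directly against Definition~\ref{conditions} and the definition of the iteration order.
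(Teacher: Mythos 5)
Your proof is correct. The paper gives no proof of this proposition at all --- it is stated as immediate from the preceding observation that $P_\alpha(\mathbf{E})$ is isomorphic to (the usual dense subset of) a finite-support iteration of $\mathcal C_{\omega_1}$ --- and your argument is a sound and complete elaboration of exactly what is being implicitly invoked: the Knaster property of $\Fn(S,2)$ for the ccc claim, and, for the completeness of the embedding, the fact that a pure-Cohen condition extending $r\restriction\mathbf{E}$ can always be glued back onto $r\restriction(\alpha\setminus\mathbf{E})$ because the $\mathbf{E}$-iterands have empty support and (after passing to the dense set $P'_\alpha$) their coordinate comparisons are ground-model facts.
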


 \begin{definition} For each $\alpha\in \mathbf{E}$, 
 let $Q'_{\alpha+1}$ be the subset of $\dot Q_{\alpha+1}$
 consisting of those pairs $(\dot a,\dot A)$ as in Definition \ref{conditions}(6).
 \end{definition}

We may note that, for each $(\dot a,\dot A)\in  Q_{\alpha+1}'$,
  $\dot a$ is a $P_{\alpha+1}(I_\alpha\cap \mathbf{E})$-name
  and $\dot A$ is a $P_\alpha$-name that is forced by $1_{P_\alpha}$ to be a cub
  subset of $\omega_1$. Also, for every $p\in P_{\alpha+1}$,
   $p\restriction\alpha \Vdash p(\alpha+1)\in Q_{\alpha+1}'$.
 
 \begin{lemma} If $\alpha\in \mathbf{E}$ and\label{desc},
 and 
  $\{ (\dot a_n, \dot A_n) : n\in \omega\}\subset Q_{\alpha+1}'$ is
  a sequence that  satisfies,
  for each $n\in\omega$,
   $1\Vdash_{P_{\alpha+1}} (\dot a_{n+1},\dot A_{n+1})\leq (\dot a_n,\dot A_n)$,
   then there is a condition $(\dot a,\dot A)\in Q_{\alpha+1}'$
   such that 
   \begin{enumerate}
   \item $1\Vdash_{P_{\alpha+1}} $ forces that $\dot a$ is the closure of $ \bigcup\{\dot a_n : n\in\omega\}$,
   \item $1_{P_\alpha}$ forces that $\dot A $ equals $\bigcap \{ \dot A_n : n\in\omega\}$,
   \item $1\Vdash_{P_{\alpha+1}}$ forces that $(\dot a,\dot A) = \bigwedge\{
   (\dot a_n, \dot A_n) : n\in \omega\}$.
   \end{enumerate} 
   \end{lemma}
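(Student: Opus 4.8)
The plan is to exhibit $\dot A$ and $\dot a$ explicitly and then verify the three clauses together with the assertion $(\dot a,\dot A)\in Q'_{\alpha+1}$. For $\dot A$ I would take a canonical $P_\alpha$-name for $\bigcap_n\dot A_n$; since the $\dot A_n$ are $P_\alpha$-names and only countably many are involved, and a canonical name for a subset of $\omega_1$ has support among the domains of its conditions, the requirement $\supp(\dot A)\subset\alpha$ is automatic. As the $\dot A_n$ are forced to form a decreasing sequence of cubs in $\omega_1$, a routine diagonalization (choose $\gamma_n\in\dot A_n$ strictly increasing; then $\sup_n\gamma_n\in\bigcap_n\dot A_n$) shows that $1_{P_\alpha}$ forces $\dot A=\bigcap_n\dot A_n$ to be a cub. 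This is clause (2).

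Next, recalling that each $\dot a_n$ is a $P_{\alpha+1}(I_\alpha\cap\mathbf{E})$-name, I would let $\dot a$ be the $P_{\alpha+1}(I_\alpha\cap\mathbf{E})$-name for $\mathrm{cl}\bigl(\bigcup_n\dot a_n\bigr)$: the statements ``$\gamma\in\bigcup_n\dot a_n$'' and ``$\gamma$ is a limit point of $\bigcup_n\dot a_n$'' are decided by the $P_{\alpha+1}(I_\alpha\cap\mathbf{E})$-generic, so such a name exists with $\supp(\dot a)\subset\mathbf{E}\cap I_\alpha$. Since a countable union of countable sets is countable, $1_{P_{\alpha+1}}$ forces $\bigcup_n\dot a_n$, hence its closure $\dot a$, to be a countable closed subset of $\omega_1$, and since the $\dot a_n$ are forced to be increasing end-extensions, $\dot a$ is forced to end-extend each $\dot a_n$; this is clause (1). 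To land $(\dot a,\dot A)$ in $Q'_{\alpha+1}$ it remains to force $\dot a\subseteq\dot A$: passing to a generic extension, for any fixed $m$ one has $a_n\subseteq A_n\subseteq A_m$ for $n\ge m$ and $a_n\subseteq a_m\subseteq A_m$ for $n\le m$, so $\bigcup_n a_n\subseteq A_m$; intersecting over $m$ and using closedness of $\dot A$ gives $\mathrm{cl}(\bigcup_n a_n)\subseteq\dot A$.

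For clause (3), that $(\dot a,\dot A)\le(\dot a_n,\dot A_n)$ for every $n$ is immediate from (1) and (2). For the converse I would take any $(\dot b,\dot B)$ forced to be a lower bound of the $(\dot a_n,\dot A_n)$ in $\mathscr J$: then $\dot b$ end-extends every $\dot a_n$, so $\bigcup_n\dot a_n\subseteq\dot b$, and closedness of $\dot b$ yields $\dot a\subseteq\dot b$; moreover any element of $\dot b\setminus\dot a$ must lie above $\max\dot a_n$ for every $n$, hence above $\sup_n\max\dot a_n=\max\dot a$, so $\dot b$ end-extends $\dot a$. Likewise $\dot B\subseteq\bigcap_n\dot A_n=\dot A$. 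Hence $1\Vdash_{P_{\alpha+1}}(\dot b,\dot B)\le(\dot a,\dot A)$, so $(\dot a,\dot A)=\bigwedge_n(\dot a_n,\dot A_n)$.

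The only delicate point is the support bookkeeping for $\dot a$ in the second step: one must check that forming a countable union and then a topological closure of $P_{\alpha+1}(I_\alpha\cap\mathbf{E})$-names introduces no support outside $\mathbf{E}\cap I_\alpha$, which holds because that subposet is a ccc finite-support Cohen iteration completely embedded in $P_{\alpha+1}$ and so decides all of the membership and limit-point statements involved. Everything else is standard material on intersections of cubs and on end-extensions of countable closed subsets of $\omega_1$, so I expect no substantive obstacle beyond keeping the names in the prescribed form.
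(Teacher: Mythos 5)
Your proposal is correct and follows essentially the same route as the paper: take the closure of the union of the $\dot a_n$ and the intersection of the $\dot A_n$, observe that this is the meet in $\mathscr J$, and note that the only delicate point is arranging $\supp(\dot a)\subset \mathbf{E}\cap I_\alpha$, which works because each $\dot a_n$ is already a $P_{\alpha+1}(I_\alpha\cap\mathbf{E})$-name. The paper states these verifications more tersely; your additional checks (the cub diagonalization, $\dot a\subseteq\dot A$, and the end-extension argument for minimality) are the routine details it leaves implicit.
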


\begin{proof}  In the forcing extension by a $P_{\alpha+1}$-generic
filter $G$, it is clear
that $(\mbox{cl}\left(\bigcup\{  \val_G(\dot a_n)\right),
   \bigcap \{\val_G(\dot A_n) : n\in \omega\})$ is the meet in 
     $\mathcal J$ of the sequence
       $\{ (\val_G(\dot a_n), \val_G(\dot A_n)) : n\in \omega\}$.
       We just have to be careful about the supports of the names for these
       objects.
Each $\dot a_n$ is a $P_{\alpha+1}(I_\alpha)$-name and so 
it is clear that there is a $P_{\alpha+1}(I_\alpha\cap \mathbf{E})$-name,
 $\dot a$, 
such that $1\Vdash_{P_{\alpha+1}} \dot a = 
\mbox{cl}\left(\bigcup\{\dot a_n : n\in \omega\}\right)$. This is the only
subtle point. Any $P_\alpha$-name, $\dot A$, for $\bigcap\{ \dot A_n : n \in\omega\}$
is adequate (although we are using that each $\dot A_n$ is a 
 $P_\alpha$-name forced by $1$ to be a cub).
\end{proof}

When we have a sequence $\{(\dot a_n, \dot A_n) :n\in\omega\} \subset
 \dot Q_{\alpha+1}'$ as in the hypothesis of Lemma \ref{desc}, we
 will use $\bigwedge\{ (\dot a_n, \dot A_n): n\in \omega\}$ to denote
 the element $(\dot a,\dot A)$ in the conclusion of the Lemma.

Let $<_E$ denote the relation on $P_{\kappa}$
defined by $p_1 <_{ E} p_0$ providing 
\begin{enumerate}
\item  
$p_1 \leq p_0$,
\item
$p_1\restriction \mathbf{E} = p_0\restriction \mathbf{E}$,
\item for $\beta\in \dom(p_0)$, 
  $\mathbf{1}_{P_\beta} \Vdash p_1(\beta) < p_0(\beta)$.
\end{enumerate}

For $r\in P_{\kappa }(\mathbf{E}) $ 
and compatible $p\in P_{\kappa}$,
 let $p\wedge r$ denote the condition with domain
  $\dom(p)\cup \dom(r)$ satisfying $(p\wedge r)(\beta)
   = p(\beta)\cup r(\beta)$ for  $\beta\in \dom(r)$
   and $ (p\wedge r)(\beta) = p(\beta)$ for
    $\beta\in \dom(p)\setminus \dom(r)$. 
    For convenience, let $p\wedge r$ equal $p$
    if $r\in P_{\kappa}$ is not compatible with $p$.

\begin{lemma}
 Assume that $\{ p_n : n\in\omega\}\subset P'_\kappa$
 is a $<_E$-descending sequence. Then there\label{omega}
 is a $p_\omega\in P_\kappa'$ such
 that $\dom(p)=\bigcup_n \dom(p_n)$
 and 
 $p_\omega<_E p_n$ for all $n\in\omega$.
\end{lemma}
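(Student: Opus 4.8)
The plan is to assemble $p_\omega$ on the set $D := \bigcup_n\dom(p_n)$ coordinate by coordinate, with no recursion along the iteration: each value $p_\omega(\beta)$ will be forced by $\mathbf 1_{P_\beta}$ — not merely by $p_\omega\restriction\beta$ — to lie in $\dot Q_\beta$, so that $p_\omega\in P_\kappa$ is immediate once its domain is checked. Two structural observations come first. Since $p_{n+1}\le p_n$, the domains $\dom(p_n)$ increase, so $D$ is a countable subset of $\kappa$; and since $p_n\restriction\mathbf E = p_0\restriction\mathbf E$ for all $n$, we have $\dom(p_n)\cap\mathbf E = \dom(p_0)\cap\mathbf E$ for all $n$, whence $D\cap\mathbf E = \dom(p_0)\cap\mathbf E$ is finite. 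For $\beta\in D$ write $n_\beta := \min\{n : \beta\in\dom(p_n)\}$; then $\beta\in\dom(p_m)$ for all $m\ge n_\beta$, and by clause (3) in the definition of $<_E$ the tail $\langle p_m(\beta) : m\ge n_\beta\rangle$ is forced by $\mathbf 1_{P_\beta}$ to be $\le_{\dot Q_\beta}$-decreasing.

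Now I define $p_\omega$ on $D$. If $\beta\in\mathbf E$, put $p_\omega(\beta) := p_0(\beta)$; as $p_0\in P'_\kappa$ this is literally an element of $\mathcal C_{\omega_1}$, and $p_\omega(\beta) = p_m(\beta)$ for every $m$ with $\beta\in\dom(p_m)$. If $\beta\notin\mathbf E$, then $\beta = \alpha+1$ with $\alpha\in\mathbf E$, so $\dot Q_\beta = \dot Q_{\alpha+1}$, and (modulo the remark below) the tail $\langle p_m(\beta) : m\ge n_\beta\rangle$ is a $\le$-decreasing $\omega$-sequence in $Q'_{\alpha+1}$; I then set $p_\omega(\beta) := \bigwedge\{p_m(\beta) : m\ge n_\beta\}$, the element of $Q'_{\alpha+1}$ furnished by Lemma \ref{desc}. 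By that lemma $\mathbf 1_{P_\beta}$ forces $p_\omega(\beta)\in\dot Q_\beta$ and forces $p_\omega(\beta)\le p_m(\beta)$ for every $m\ge n_\beta$.

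Three things then need checking. (i) $p_\omega\in P_\kappa$: it is a function with countable domain $D$, $D\cap\mathbf E$ is finite, and each $p_\omega(\beta)$ is a $P_\beta$-name forced by $\mathbf 1_{P_\beta}$ into $\dot Q_\beta$. (ii) $p_\omega\in P'_\kappa$: its values on $\mathbf E\cap D$ are the literal Cohen conditions $p_0(\beta)$. (iii) $p_\omega <_E p_n$ for each $n$: clause (2) holds since $p_\omega\restriction\mathbf E = p_0\restriction\mathbf E = p_n\restriction\mathbf E$; clause (3) holds since for $\beta\in\dom(p_n)\cap\mathbf E$ we have $p_\omega(\beta) = p_n(\beta)$, while for $\beta\in\dom(p_n)\setminus\mathbf E$ we have $n\ge n_\beta$ and so $\mathbf 1_{P_\beta}$ forces $p_\omega(\beta)\le p_n(\beta)$; and clause (1), $p_\omega\le p_n$, now follows because $\dom(p_n)\subseteq D$ and, for $\beta\in\dom(p_n)$, $p_\omega\restriction\beta$ forces $p_\omega(\beta)\le p_n(\beta)$ (being forced by $\mathbf 1_{P_\beta}$).

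The only genuinely delicate point — everything else being bookkeeping on domains and the constant Cohen coordinates — is the step at the coordinates $\beta = \alpha+1$: one needs that the tail $\langle p_m(\beta)\rangle$ really may be treated as a sequence in $Q'_{\alpha+1}$ (this uses that for every $p\in P_{\alpha+1}$ one has $p\restriction\alpha\Vdash p(\alpha+1)\in Q'_{\alpha+1}$, together with the richness of $\dot Q_{\alpha+1}$ in names), and that the meet produced by Lemma \ref{desc} is again a name of the prescribed form — first coordinate a name with support inside $\mathbf E\cap I_\alpha$, second coordinate a $P_\alpha$-name for a cub — which is exactly the support computation isolated in the proof of Lemma \ref{desc}.
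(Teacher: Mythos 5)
Your proof is correct and takes essentially the same route as the paper's: keep the (constant) Cohen values $p_0(\beta)$ at coordinates $\beta\in\mathbf{E}$ and use the meet $\bigwedge$ furnished by Lemma \ref{desc} at the coordinates $\beta+1$, the only difference being that the paper organizes this as an induction on $\beta\in\mathbf{E}$ while you assemble the condition coordinate-wise, which is legitimate precisely because each value is forced into $\dot Q_\beta$ by $\mathbf{1}_{P_\beta}$ rather than merely by $p_\omega\restriction\beta$.
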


\begin{proof}
We let $J = \bigcup\{\dom(p_n) : n\in \omega\}$.
 We define $p_\omega\restriction \beta$ by induction
 on $\beta\in \mathbf{E}$ so that
  $\dom(p_\omega\restriction\beta) = J\cap \beta$.
  For limit $\alpha$, simply $p_\omega\restriction\alpha
   =\bigcup_{\beta<\alpha} p_\omega\restriction\beta$. 
   If $p_\omega\restriction \beta <_E p_n\restriction\beta$
   for all $n\in\omega$ and $\beta <\alpha$,
    then we have $p_\omega\restriction\alpha <_E p_n\restriction
    \alpha$ for all $n\in\omega$. 
    Now let $\alpha = \beta+2$ with $\beta\in \mathbf{E}$
    and assume that we have defined
     $p_\omega\restriction \beta$ as above. 
     If $\beta\in J$, then let $p_\omega(\beta) = p_0(\beta)$.
     If $\beta+1\in J$, then 
      $\mathbf{1}_{P_{\beta+1}}$
      forces that $\{ p_n(\beta+1) :  n\in\omega\}$ 
      is a descending sequence in $\dot Q_{\beta+1}$.
      We define $p_\omega(\beta+1)$
      to equal $\bigwedge \{ p_n(\beta+1) : n\in\omega\}$. 
      It follows by the definition of
    $\bigwedge \{ p_n(\beta+1) : n\in\omega\}$,
    that $\mathbf{1}_{P_{\beta+1}}\Vdash
     p_\omega(\beta+1) < p_n(\beta+1)$ for all $n\in\omega$.  
\end{proof}

\begin{lemma}
 For\label{1proper}
  every $p_0\in P'_{\kappa }$ and dense 
 subset $D$ of $P_{\kappa }$, there is a
  $p <_E p_0$ satisfying that
  the set $D\cap \{ p\wedge r : r\in P_{\kappa }(\mathbf{E})\}$
  is predense below $p$. Moreover,
   there is a countable subset of
   $D\cap \{ p\wedge r : r\in P_{\kappa}(\mathbf{E})\}$
   that is predense below $p$.
\end{lemma}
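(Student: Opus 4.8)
The plan is to build $p$ as the limit of a $<_E$-descending $\omega$-sequence $\langle p_n : n \in \omega\rangle$ obtained by a fusion/bookkeeping argument, where at each stage we enumerate a countable collection of ``tasks'' and handle one task at stage $n$, each task ensuring that a single element of $P_\kappa(\mathbf{E})$-worth of the condition can be met inside $D$. The key combinatorial point is that, working below a fixed $p_n \in P'_\kappa$, the poset $\{p_n \wedge r : r \in P_\kappa(\mathbf{E})\}$ is (isomorphic to) a ccc poset — essentially a finite support iteration of Cohen posets, by Proposition \ref{justCohen} — and hence has a countable dense (indeed, for our purposes, a countable predense-refining) subset of relevant conditions $r$ that we need to consider. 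So the bookkeeping only ever needs to juggle countably many $r$'s, which is what makes an $\omega$-length fusion sufficient.

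First I would fix, at each stage $n$, an enumeration context: since $P_n'$-conditions have countable domain and $\dom(p_n)\cap\mathbf{E}$ is finite, the set of ``relevant'' $r\in P_\kappa(\mathbf{E})$ compatible with $p_n$ and with domain contained in a suitable countable $\mathscr I$-saturated set is essentially countable up to the equivalence of forcing the same thing below $p_n$; enumerate these as $\langle r_{n,k} : k\in\omega\rangle$ and set up a pairing function so that every pair $(n,k)$ is visited. At the stage handling $(n,k)$: we have the current condition $q$ (the most recent $p_m$), and we look at $q \wedge r_{n,k}$. If this is a condition and it is compatible with $D$ in the sense that some $d\in D$ with $d \le q\wedge r_{n,k}$ exists, we want to reflect $d$ back down onto the $\mathbf{E}$-coordinates: that is, we find $d \le q\wedge r_{n,k}$ in $P'_\kappa$, and then let $q' = (d\restriction(\kappa\setminus\mathbf{E})) \cup (q\restriction\mathbf{E})$ — i.e., we keep $q$'s Cohen part but improve the non-Cohen part to match $d$. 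The verification that $q' <_E q$ and that $q'\wedge r_{n,k} \le d$ (hence lands in $D$, since $D$ is dense and downward... well, $D$ is merely dense, so we actually need $q'\wedge r_{n,k}$ to extend some element of $D$, which follows because $q'\wedge r_{n,k} \le d \in D$ and we can replace ``$D$'' by its downward closure or just observe $q'\wedge r_{n,k}$ witnesses predensity) uses the structure of $<_E$: condition (2) holds by construction, condition (1) by $d \le q$, and condition (3) because on each $\beta \in \dom(q)$ we have strengthened $q(\beta)$ to $d(\beta)$.

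Then I would apply Lemma \ref{omega} to the $<_E$-descending sequence $\langle p_n : n\in\omega\rangle$ to obtain $p = p_\omega <_E p_n$ for all $n$, with $\dom(p) = \bigcup_n \dom(p_n)$. It remains to check that $\{p \wedge r : r \in P_\kappa(\mathbf{E})\} \cap D$ (or rather the countable subcollection of it we produced, namely $\{(p\wedge r_{n,k})$-witnesses handled above$\}$) is predense below $p$. Given any $p' \le p$, first pass to $p'' \in P'_\kappa$ below $p'$, then look at $p''\restriction\mathbf{E} \in P_\kappa(\mathbf{E})$; by the density argument inside the ccc Cohen iteration (Proposition \ref{justCohen}), $p''\restriction\mathbf{E}$ is compatible with one of the $r_{n,k}$ that was live at the stage we handled it, and chasing through the fusion construction one sees the corresponding element of $D$ is compatible with $p''$, hence with $p'$. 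The main obstacle I anticipate is the last step: matching up an arbitrary extension $p'$ of $p$ with the ``right'' task $(n,k)$ — one must argue that the $\mathbf{E}$-part of a strengthening of $p$ can be captured by finitely/countably much information that was already present at some finite stage, so that the $r_{n,k}$ we enumerated really does meet it. This is exactly the kind of reflection that the finiteness of $\dom(\cdot)\cap\mathbf{E}$ and the ccc-ness of $P_\kappa(\mathbf{E})$ are designed to give, but making the bookkeeping airtight (ensuring no relevant $r$ is missed, and that ``live'' tasks stay live) is the delicate part.
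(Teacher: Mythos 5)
The central step of your plan fails: $P_\kappa(\mathbf{E})$ is ccc but does \emph{not} have a countable dense (or predense-refining) subset, so the claim that ``the bookkeeping only ever needs to juggle countably many $r$'s'' is false. Already a single coordinate $\mathcal C_{\omega_1}=\Fn(\omega_1,2)$ has density $\aleph_1$: any countable family of conditions has all its domains contained in some $\gamma<\omega_1$, and then no member of the family extends (or anticipates) a condition deciding the coordinate $\gamma$. Consequently an $\omega$-length fusion run against a pre-enumerated countable list of $r$'s cannot establish predensity: an extension $p'\le p$ of the fusion limit may strengthen the Cohen coordinates at arbitrary countable ordinals, and compatibility of $p'\restriction\mathbf{E}$ with some listed $r_{n,k}$ does not give compatibility of $p'$ with the actual witness $d\in D$ chosen at stage $(n,k)$, since $d\restriction\mathbf{E}$ in general properly extends $r_{n,k}$ in ways that were not (and could not have been) anticipated by a countable list. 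Nor can you repair this by lengthening the fusion to $\omega_1$: Lemma \ref{omega} only produces $<_E$-lower bounds of $\omega$-sequences, and $\omega_1$-limits do not exist because the non-Cohen supports are countable. This is exactly the ``delicate part'' you flagged at the end; it is not a bookkeeping issue but a genuine obstruction to the whole strategy.

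The paper avoids this by not enumerating the $r$'s in advance. It runs a transfinite recursion of length $<\omega_1$: at stage $\eta$ it forms $\bar p_\eta$ (the previous condition at successors, an $\omega$-limit via Lemma \ref{omega} along a cofinal sequence at limits) and asks whether the conditions $p_\zeta\wedge r_\zeta\in D$ already chosen are predense below $\bar p_\eta$; if so it stops with $p=\bar p_\eta$, and if not, a witness to the failure is extended into $D$ and split into a new pair $(p_\eta,r_\eta)$ with $p_\eta<_E\bar p_\eta$, $p_\eta\wedge r_\eta\in D$, and $p_\eta\wedge r_\eta$ incompatible with all earlier $p_\zeta\wedge r_\zeta$. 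Because the conditions agree and remain compatible off $\mathbf{E}$ (the $\mathbf{E}$-parts are frozen along $<_E$), these incompatibilities are carried entirely by the Cohen parts, so $\{r_\zeta\}$ is an antichain in the ccc poset $P_\kappa(\mathbf{E})$ and the recursion halts at a countable stage --- which simultaneously yields the ``moreover'' clause. The $r$'s are chosen adaptively, dictated by the current failure of predensity; that is what lets countably many of them suffice even though no countable dense subset of $P_\kappa(\mathbf{E})$ exists. You would need to replace your fixed enumeration by some such self-terminating adaptive choice for the argument to go through.
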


\begin{proof}
 Let $r_0 = p_0\restriction \mathbf{E}$.
 There is nothing to prove if $p_0\in D$ so assume
 that it is not. 
 By induction on $0<\eta<\omega_1$, we choose,
 if possible, 
 conditions $p_\eta, r_\eta $  
 such that,
 for all $\zeta <\eta$: 
\begin{enumerate}
 \item
  $p_\zeta <_E p_\eta$ and $r_\zeta < r_0$,
 \item $p_{\zeta}\wedge r_{\zeta}\in D$,
  \item  $(p_\eta\wedge r_\eta)\perp (p_\zeta\wedge r_\zeta)$.
   \end{enumerate}
   Suppose
   that we have so chosen $\{ p_\zeta, r_\zeta : \zeta < \eta\}$. 
   Let $L_\eta = \bigcup \{ \dom(p_\zeta) :
    \zeta < \eta\}$. 
    If $\eta = \beta+1$, let $\bar p_\eta = p_\beta$. 
    If $\eta$ is a limit, then let $\bar p_\eta$ be a condition
    as in Lemma \ref{omega} for some cofinal sequence
    in $\eta$.
     If $\{ p_\zeta\wedge r_\zeta :  \zeta < \eta\}$
      is predense below $\bar p_\eta$, we halt the induction
      and set $p = \bar p_\eta$. 
      Otherwise we choose any $p_\eta <_E \bar p_\eta$
      and an $r_\eta\supset r_0$ so that
       $p_\eta\wedge r_\eta$ 
  in $D$. The induction will halt for
      some $\eta<\omega_1$ since the family
       $\{ r_\zeta  : \zeta<\eta\}$ 
       is evidently an antichain in 
        $P_\kappa(\mathbf{E})$. 
 \end{proof}

\begin{corollary} For each $\beta\in \mathbf{E}$, 
 $P_\beta$ is proper\label{noreals}
  and $P_\beta/P_\beta(\mathbf{E}\cap \beta)$
 does not add any reals. 
\end{corollary}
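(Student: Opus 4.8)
The plan is to derive both assertions from a single $\omega$-step fusion, using Lemmas~\ref{omega} and~\ref{1proper} at level $\beta$ rather than at $\kappa$. Those lemmas were stated for $P_\kappa$, but their proofs use only Proposition~\ref{shrink}, Lemma~\ref{desc}, the ccc-ness of $P_\alpha(\mathbf E)$ (Proposition~\ref{justCohen}) and the fact that $<_E$ preserves $P'$, so they hold verbatim with $P_\beta$ in place of $P_\kappa$; I will use them in that form. I also record the elementary points that $\mathbf E\cap\beta=\mathbf E$ below $\beta$ (so $P_\beta(\mathbf E\cap\beta)=P_\beta(\mathbf E)$), that $p_1<_E p_0$ forces $p_1\restriction\mathbf E=p_0\restriction\mathbf E$, and that, by Proposition~\ref{shrink}, $(p\wedge r)\restriction\mathbf E=(p\restriction\mathbf E)\wedge r\in P_\beta(\mathbf E)$ and $p\wedge r\le(p\restriction\mathbf E)\wedge r$ whenever $p\in P'_\beta$ and $r\in P_\beta(\mathbf E)$.

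For \emph{properness}, I would fix a countable $M\prec H(\theta)$ (for a suitably large regular $\theta$) containing $P_\beta$ and the relevant parameters, and a $p\in P_\beta\cap M$; replacing $p$ by some $p_0\in P'_\beta\cap M$ below it (density of $P'_\beta$ plus elementarity), enumerate the dense open subsets of $P_\beta$ that lie in $M$ as $\langle D_n:n\in\omega\rangle$. Recursively, given $p_n\in P'_\beta\cap M$, apply Lemma~\ref{1proper} inside $M$ to $p_n$ and $D_n$ to obtain $p_{n+1}<_E p_n$ in $M$ together with a countable $E_n\in M$ with $E_n\subseteq D_n\cap\{p_{n+1}\wedge r:r\in P_\beta(\mathbf E)\}$ predense below $p_{n+1}$; here $p_{n+1}\in P'_\beta$ since $<_E$ preserves $P'_\beta$, and $E_n\subseteq M$ since $E_n$ is a countable element of $M$. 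Let $q=p_\omega$ be the $<_E$-lower bound of $\langle p_n\rangle$ given by Lemma~\ref{omega}. Then $q\le p_0\le p$, and for each $n$ the set $E_n\subseteq D_n\cap M$ is predense below $p_{n+1}\ge q$, hence below $q$; thus $D_n\cap M$ is predense below $q$, so $q$ is $(M,P_\beta)$-generic and $P_\beta$ is proper.

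For the \emph{quotient}, it suffices to find, for every $p\in P_\beta$ and every $P_\beta$-name $\dot x$ for an element of $\fomom$, a condition $q\le p$ forcing $\dot x\in V[\dot G_\beta\cap P_\beta(\mathbf E)]$. I would run the same recursion in $V$ (no submodel this time), with $D_n=\{r\in P_\beta:r\text{ decides }\dot x(n)\}$ and with $p_0\in P'_\beta$ below $p$, getting $\langle p_n\rangle$, the predense-below-$p_{n+1}$ sets $E_n\subseteq D_n\cap\{p_{n+1}\wedge r:r\in P_\beta(\mathbf E)\}$, and $q=p_\omega$. The crucial observation is that each member of $E_n$ has the form $p_{n+1}\wedge r$ with $\mathbf E$-part $(p_{n+1}\wedge r)\restriction\mathbf E=(p_0\restriction\mathbf E)\wedge r$ (using $p_{n+1}\restriction\mathbf E=p_0\restriction\mathbf E$), and that two members of $E_n$ whose $\mathbf E$-parts are compatible in $P_\beta(\mathbf E)$ are already compatible in $P_\beta$ — one amalgamates them over their common $\mathbf E$-extension, which is legitimate because they share the single non-Cohen ``spine'' $p_{n+1}$ — and hence decide $\dot x(n)$ the same way. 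Therefore, in $V[G_\beta]$ with $q\in G_\beta$ and $G_0:=G_\beta\cap P_\beta(\mathbf E)$, the value $\dot x(n)$ is the unique $v$ for which there is $p_{n+1}\wedge r\in E_n$ with $(p_0\restriction\mathbf E)\wedge r\in G_0$ forcing $\dot x(n)=v$: this is single-valued by the coherence just noted, and it is realized since $E_n$ is predense below $q$. Because $p_{n+1}$, $E_n$ and $\dot x$ all lie in $V$, this defines $\dot x$ from $G_0$ inside $V[G_0]$, i.e.\ $q\Vdash\dot x\in V[\dot G_\beta\cap P_\beta(\mathbf E)]$. (This reconstruction is a form of the ``Cohen real trick''.)

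I expect the main obstacle to be precisely this coherence step: one must verify carefully that two special conditions $p_{n+1}\wedge r$ and $p_{n+1}\wedge r'$ with compatible $\mathbf E$-parts admit a common extension of the form $p_{n+1}\wedge r''$ (so that the reconstruction of $\dot x(n)$ from $G_0$ is well-defined), and this is exactly where it matters that Lemma~\ref{1proper} hands back its predense set inside the family $\{p_{n+1}\wedge r:r\in P_\beta(\mathbf E)\}$ over a fixed spine whose $\mathbf E$-part is frozen along the fusion. Everything else is the routine fusion bookkeeping already packaged into Lemmas~\ref{omega} and~\ref{1proper}, together with — for the properness half — the standard observation that the $p_n$ and $E_n$ can be kept inside $M$.
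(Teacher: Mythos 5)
Your proposal is correct and follows essentially the same route as the paper: a fusion over a countable elementary submodel using Lemmas \ref{1proper} and \ref{omega} produces the $(M,P_\beta)$-generic condition $p_\omega$, and the fact that the non-Cohen ``spine'' is frozen along the fusion is exactly what makes every name for a real reduce to a $P_\beta(\mathbf{E})$-name. The only difference is cosmetic: you carry out the reconstruction of $\dot x(n)$ from $G_\beta\cap P_\beta(\mathbf{E})$ explicitly (the amalgamation-over-the-common-spine coherence argument), whereas the paper simply asserts that $p_\omega$ forces each such name $\tau\in M$ to equal a $P_\beta(\mathbf{E})$-name.
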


\begin{proof}
Let $P_\beta\in M$ where $M$ is a countable
elementary submodel of $H(\kappa^+)$. 
Let $\{ D_n : n\in \omega\}$ be an enumeration 
of the dense open subsets of $P_\beta$ that are
members of $M$. By Lemma \ref{1proper}, 
 we have that for each $q\in P_\beta\cap M$ and
  $n\in\omega$, there is a $\bar q<_E q$ also in 
   $P_\beta\cap M$ so that 
    $D_n\cap  \{ \bar q \wedge r : r\in 
     P_\beta(\mathbf{E})\cap M\}$ is
     predense below $\bar q$. 
     Let $M\cap \omega_1=\delta$. 
     Fix any $p_0\in P_\beta\cap M$.
  By a simple recursion, we may construct
  a $<_E$-descending sequence
   $\{ p_n : n\in\omega\}\subset M$
   so that, for each $n$,
    $D_n\cap \{ p_{n+1}\wedge r : r\in 
     P_{\beta}(\mathbf{E})\cap M\}$ is
      predense below $p_{n+1}$.
      By Lemma \ref{omega}, we have
      the $(P_\beta,M)$-generic condition $p_\omega$. 
      It is clear that for each $P_\beta$-name $\tau\in M$
      for a subset of $\omega$, $p_\omega$ forces
      that $\tau$ is equal to a $P_\beta(\mathbf{E})$-name.
      This implies that $P_\beta/P_\beta(\mathbf{E}\cap \beta)$
      does not add reals.
\end{proof}
 
 We can now prove that $P_{\beta+2}$ does indeed
 force that $\dot C_\beta$ is a cub.

\begin{lemma}
 For each $\beta\in \mathbf{E}$, 
  $P_{\beta+2}$ forces\label{itiscub}
   that $\dot C_\beta$ is unbounded in
   $\omega_1$. 
\end{lemma}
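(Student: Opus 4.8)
The plan is to reduce the lemma to the density, for each fixed $\gamma<\omega_1$, of the set $E_\gamma$ of those $q\in P_{\beta+2}$ that force some ordinal strictly above $\gamma$ to belong to $\dot C_\beta$. Since $\beta+2\in\mathbf E$, the poset $P_{\beta+2}$ is proper by Corollary~\ref{noreals} and hence preserves $\omega_1$, so the density of every $E_\gamma$ yields that $\dot C_\beta$ is forced to be unbounded in $\omega_1$. To check this density, fix $\gamma$ and an arbitrary $p\in P_{\beta+2}$. Using that $P'_{\beta+2}$ is dense, and adjoining $\beta+1$ to the domain of $p$ with a trivial value there if necessary, I may assume $p\in P'_{\beta+2}$ and that $p(\beta+1)$ is an element $(\dot a,\dot A)$ of $Q'_{\beta+1}$; thus $\supp(\dot a)\subseteq\mathbf E\cap I_\beta$, $\dot A$ is a $P_\beta$-name with $1_{P_\beta}\Vdash\dot A$ a cub of $\omega_1$, and $1_{P_{\beta+1}}\Vdash(\dot a,\dot A)\in\mathscr J$. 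Working inside $P_{\beta+1}$, and using that $\dot a$ is forced to be a countable subset of $\omega_1$ while $\dot A$ is forced to be an uncountable, hence unbounded, closed subset of $\omega_1$, I would first extend $p\restriction(\beta+1)$ to a condition $q_1$ that decides an ordinal $\sigma_0<\omega_1$ with $\sup(\dot a)\le\sigma_0$, together with an ordinal $\eta_0<\omega_1$ with $\eta_0>\max(\gamma,\sigma_0)$ and $\eta_0\in\dot A$.

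The main obstacle, and the only delicate point, is that one cannot simply take the naive extension $(\dot a\cup\{\eta_0\},\dot A)$ of $(\dot a,\dot A)$ as the value of the new condition at coordinate $\beta+1$: although adjoining the ground-model ordinal $\eta_0$ costs nothing in terms of support, membership in $\dot Q_{\beta+1}$ requires that $1_{P_{\beta+1}}$ --- not merely the condition $q_1$ --- force the pair into $\mathscr J$, and $1_{P_{\beta+1}}$ need not force $\eta_0\in\dot A$. I get around this by padding the \emph{second} coordinate in the same way. Put $\dot c:=\dot a\cup\{\eta_0\}$ and $\dot C:=\dot A\cup\{\eta_0\}$. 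Then $1_{P_{\beta+1}}$ forces $\dot c$ to be a countable closed subset of $\omega_1$ (adjoining one ordinal to a closed set again yields a closed set), $\dot C$ to be an uncountable closed subset of $\omega_1$, and $\dot c\subseteq\dot C$; hence $1_{P_{\beta+1}}\Vdash(\dot c,\dot C)\in\mathscr J$. Since also $\supp(\dot c)=\supp(\dot a)\subseteq\mathbf E\cap I_\beta$ and $\supp(\dot C)=\supp(\dot A)\subseteq\beta$, the pair $(\dot c,\dot C)$ is literally a member of $Q'_{\beta+1}$, hence of $\dot Q_{\beta+1}$.

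To finish, I would check that $q:=q_1\cup\langle\beta+1,(\dot c,\dot C)\rangle$ is as required. Below $q_1$ one has $\eta_0>\sigma_0\ge\sup(\dot a)$ and $\eta_0\in\dot A$, so $q_1$ forces that $\dot c$ end-extends $\dot a$ and that $\dot C=\dot A$; therefore $q_1\Vdash(\dot c,\dot C)\le(\dot a,\dot A)$ in $\dot Q_{\beta+1}$, whence $q\in P_{\beta+2}$ and $q\le p$. Finally $q$ forces $(\dot c,\dot C)$ into the generic filter for $\dot Q_{\beta+1}$, and $\eta_0$ is its maximum, so $q\Vdash\eta_0\in\dot C_\beta$ with $\eta_0>\gamma$; thus $q\in E_\gamma$, and $E_\gamma$ is dense. (Lemmas~\ref{desc} and~\ref{omega} are not needed for unboundedness; they will enter only when one later shows that $\dot C_\beta$ is moreover closed.)
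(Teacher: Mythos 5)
Your proof is correct, and its skeleton coincides with the paper's: reduce to a density statement, arrange $p(\beta+1)=(\dot a,\dot A)\in Q'_{\beta+1}$, produce one new countable ordinal that is forced both to lie in $\dot A$ and to lie strictly above $\dot a$, and then pad \emph{both} coordinates with that ordinal so that the resulting pair is forced by $1_{P_{\beta+1}}$ (not merely by the strengthened condition) to be in $\mathscr J$ --- this padding of the second coordinate is exactly the paper's move $\dot E=\dot D\cup\{\delta\}$, and you have correctly isolated it as the only delicate point. Where you genuinely diverge is in how the new ordinal is produced. The paper takes a countable $M\prec H(\kappa^+)$ containing $p$ and an $(M,P_\beta)$-generic $\bar p\le p\restriction\beta$ and uses $\delta=M\cap\omega_1$: genericity forces $\dot a\subset\delta$ and, since the name $\dot D=\pi_1(p(\beta+1))$ lies in $M$ and is forced to be a cub, it also forces $\delta\in\dot D$. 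You instead invoke preservation of $\omega_1$ (available from the properness corollary for $P_\beta$ together with the fact that $P_{\beta+1}$ adds only a ccc iterand on top of it) to find an extension deciding a bound $\sigma_0\ge\sup(\dot a)$, and then use unboundedness of $\dot A$ to force some $\eta_0>\max(\gamma,\sigma_0)$ into $\dot A$. Both mechanisms are sound; yours is the more elementary and self-contained, while the paper's rehearses the elementary-submodel device it needs anyway in Lemma \ref{Cohengeneric}, where what actually gets used is that the generic cub is forced to contain $M\cap\omega_1$ for every suitable $M$, not merely \emph{some} large ordinal.
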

 
\begin{proof}
Let $p\in P_{\beta+2}$ be any condition and
let $\gamma\in \omega_1$. By possibly strengthening
 $p $ we can assume that $p(\beta+1)\in Q'_{\beta+1}$. 
We find 
 $q<p$ so that $q\Vdash \dot C_\beta\setminus\gamma$
 is not empty. Let $p, P_{\beta+2}$ be members of
  a countable elementary submodel $M\prec
   H(\kappa^+)$. Let $\bar p< p\restriction 
   \beta$ be $(P_\beta,M)$-generic  
   and let $\dot D = \pi_1(p(\beta+1))\in 
    \mathscr D_\beta$.  
    Since $p, \dot D$ are members of $M$
      and $p$ forces that $\dot D$ is a cub, it
      follows that $\bar p\Vdash \delta\in \dot D$. 
      It also follows that $\bar p \Vdash \dot a\subset 
      \dot D\cap \delta$.
      Let $\dot a_1$ be the  $P_{\beta+1}$-name
 that has support equal to the support of the
 name $\dot a$ and satisfies that 
  $\mathbf{1}_{P_\beta+1}\Vdash \dot a_1 = \dot a\cup\{ \delta\}$.
      Let $\dot E $ be the $P_\beta$-name
  for $\dot D\cup \{\delta\}$ and notice
  that, given that $(\dot a,\dot D)\in  Q'_{\beta+1}$,
  we have that 
   $(\dot a_1, \dot E)$ is also in $ Q'_{\beta+1}$.
Now  let $q\in P_{\beta+2}$
  be defined according to $ q\restriction \beta = \bar p$,
   $q(\beta) = p(\beta)$,
    and $q(\beta+1) =  (\dot a_1, \dot E)$.
      It is immediate  that $q\restriction \beta+1
       < p\restriction\beta+1$. 
       Also, $q\restriction \beta+1$ forces
       that $\dot a$ is an initial segment of $\dot a_1$,
       that $\dot a_1\subset \dot D$, and
       that $\dot E\subset \dot D$.  
       Therefore, $q<p$ and $q\Vdash \delta\in \dot C_\beta$.
\end{proof}

\begin{lemma}
 For each $\beta \leq \kappa$, $P_\beta$ 
 satisfies\label{noreals} the $\aleph_2$-cc.
\end{lemma}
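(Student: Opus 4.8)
The plan is to reduce to showing that $P_\kappa$ itself is $\aleph_2$-cc: for $\beta<\kappa$ we have $P_\beta\subset_c P_\kappa$, and a subposet of an $\aleph_2$-cc poset whose incompatibility relation is the restricted one is again $\aleph_2$-cc. Since $P'_\kappa$ is dense in $P_\kappa$, it then suffices to show that any $\{p_\xi:\xi<\omega_2\}\subseteq P'_\kappa$ contains two compatible conditions. First I would apply the $\Delta$-system lemma --- available since $\aleph_2$ is regular and $\CH$ gives $|\alpha|^{\aleph_0}<\aleph_2$ for every $\alpha<\aleph_2$ --- to obtain $S\in[\omega_2]^{\aleph_2}$ with $\{\dom(p_\xi):\xi\in S\}$ a $\Delta$-system with a countable root $R$.

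Then I would refine $S$ twice. Because $R\cap\mathbf{E}$ is finite and each $p_\xi(\gamma)$ for $\gamma\in R\cap\mathbf{E}$ is a finite partial function into $2$, there are at most $\aleph_1$ possibilities for $p_\xi\restriction(R\cap\mathbf{E})$, so I shrink $S$ to make this restriction the same for all $\xi\in S$. The delicate refinement concerns the countable set $R\cap(\mathbf{E}+1)$. For $\gamma=\alpha+1\in R\cap(\mathbf{E}+1)$ write $p_\xi(\gamma)=(\dot a^\gamma_\xi,\dot A^\gamma_\xi)\in Q'_{\alpha+1}$, so $\dot a^\gamma_\xi$ is a $P_{\alpha+1}(I_\alpha\cap\mathbf{E})$-name forced to be a countable closed subset of $\omega_1$. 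This is exactly where clause (1) of the construction of $\mathscr{I}$ is used: $|I_\alpha|\le\aleph_1$, so $P_{\alpha+1}(I_\alpha\cap\mathbf{E})$ is a ccc poset of size at most $\aleph_1$, and under $\CH$ such a poset admits at most $\aleph_1$ names --- up to forced equality --- for countable closed subsets of $\omega_1$ (such a name is coded by a countable sequence of names for countable ordinals). Hence the ``$\dot a$-profile'' $\langle\dot a^\gamma_\xi:\gamma\in R\cap(\mathbf{E}+1)\rangle$ has at most $\aleph_1^{\aleph_0}=\aleph_1$ values up to forced equality, and I refine $S$ once more so that for all $\xi,\eta\in S$ and all $\gamma=\alpha+1\in R\cap(\mathbf{E}+1)$ we have $\mathbf{1}_{P_{\alpha+1}}\Vdash\dot a^\gamma_\xi=\dot a^\gamma_\eta$.

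Now, for distinct $\xi,\eta$ in the final $S$, I claim the function $p$ with domain $\dom(p_\xi)\cup\dom(p_\eta)$ defined by $p\restriction(\dom(p_\xi)\setminus R)=p_\xi$, $p\restriction(\dom(p_\eta)\setminus R)=p_\eta$, $p(\gamma)=p_\xi(\gamma)$ for $\gamma\in R\setminus(\mathbf{E}+1)$, and $p(\gamma)=(\dot a^\gamma_\xi,\dot A^\gamma_\xi\cap\dot A^\gamma_\eta)$ for $\gamma=\alpha+1\in R\cap(\mathbf{E}+1)$, is a common extension of $p_\xi$ and $p_\eta$ in $P_\kappa$. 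Its domain is countable and meets $\mathbf{E}$ in the finite set $(\dom(p_\xi)\cup\dom(p_\eta))\cap\mathbf{E}$, and the only value needing a check is at $\gamma=\alpha+1\in R\cap(\mathbf{E}+1)$: taking the evident $P_\alpha$-name for $\dot A^\gamma_\xi\cap\dot A^\gamma_\eta$, its support is contained in $\supp(\dot A^\gamma_\xi)\cup\supp(\dot A^\gamma_\eta)\subseteq\alpha$, and $\mathbf{1}_{P_{\alpha+1}}$ forces $(\dot a^\gamma_\xi,\dot A^\gamma_\xi\cap\dot A^\gamma_\eta)\in\mathscr{J}$, since the intersection of two cubs of $\omega_1$ is again a cub, which contains $\dot a^\gamma_\xi$ because $\dot a^\gamma_\xi\subseteq\dot A^\gamma_\xi$ and $\mathbf{1}\Vdash\dot a^\gamma_\xi=\dot a^\gamma_\eta\subseteq\dot A^\gamma_\eta$; thus $p(\gamma)\in Q'_{\alpha+1}\subseteq\dot Q_{\alpha+1}$ and $p\in P_\kappa$. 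Finally $p\le p_\xi$: on $\dom(p_\xi)\setminus R$ and on $R\setminus(\mathbf{E}+1)$ the values coincide, while on $\gamma=\alpha+1\in R\cap(\mathbf{E}+1)$ one has $\mathbf{1}_{P_\gamma}\Vdash(\dot a^\gamma_\xi,\dot A^\gamma_\xi\cap\dot A^\gamma_\eta)\le(\dot a^\gamma_\xi,\dot A^\gamma_\xi)$ because $\dot A^\gamma_\xi\cap\dot A^\gamma_\eta\subseteq\dot A^\gamma_\xi$; symmetrically $p\le p_\eta$, using $\mathbf{1}\Vdash\dot a^\gamma_\xi=\dot a^\gamma_\eta$ to see that $\dot a^\gamma_\xi$ end-extends $\dot a^\gamma_\eta$. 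So $p_\xi$ and $p_\eta$ are compatible.

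I expect the crux to be the second refinement. The second coordinates $\dot A^\gamma_\xi$ of the root Jensen conditions range, a priori, over as many as $\aleph_2$ names and cannot be made to agree; what rescues the argument is that they never need to --- two cubs always meet in a cub, so the root Jensen coordinates are amalgamated rather than matched. The first coordinates $\dot a^\gamma_\xi$ must be matched, and it is precisely the bound $|I_\gamma|\le\aleph_1$ from clause (1) of the definition of $\mathscr{I}$ that keeps the number of relevant $\dot a$-names at $\aleph_1$, so that the final pigeonhole refinement succeeds.
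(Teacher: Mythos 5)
Your proof is correct, but it reaches the conclusion by a genuinely different route from the paper's. The paper proceeds by induction on $\beta$ (successor and small-cofinality limit steps being routine) and, at a limit of cofinality at least $\omega_2$, runs an elementary-submodel reflection: it fixes $M\prec H(\kappa^+)$ of size $\aleph_1$ with $M^\omega\subseteq M$ containing the putative antichain $\{p_\gamma:\gamma\in\omega_2\}$, sets $\lambda=M\cap\omega_2$, observes that the countable trace of $p_\lambda$ on $M$ --- namely $\dom(p_\lambda)\cap M$ together with the sequence of first coordinates $\dot a_\beta$ of its Jensen entries there --- is an element of $M$, and uses elementarity to produce a $\gamma\in M$ whose condition has the same trace; the resulting compatibility check is exactly your amalgamation. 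You instead dispense with the induction by reducing every $\beta\le\kappa$ to $\beta=\kappa$ via $P_\beta\subset_c P_\kappa$, and you replace the reflection by a $\Delta$-system refinement together with an explicit $\CH$-count of the $P_{\alpha+1}(I_\alpha\cap\mathbf{E})$-names for countable closed subsets of $\omega_1$. The crux is identical in both arguments: only the first coordinates of the root Jensen conditions need to be matched, the cub second coordinates are simply intersected, and the bound $|I_\alpha|\le\aleph_1$ is what makes the matching possible (keeping your name-count at $\aleph_1$, respectively putting the relevant names inside $M$ in the paper). Your version has the merit of making the roles of $\CH$ and of clause (1) in the definition of $\mathscr I$ completely explicit; the paper's is shorter and reuses machinery already in place. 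The only step you might spell out further is the name count itself: one should note that $P_{\alpha+1}(I_\alpha\cap\mathbf{E})$ is ccc and that $\dot a$ is forced to be bounded in $\omega_1$, so that $\sup\dot a$ has only countably many possible values and $\dot a$ reduces to a nice name for a subset of a fixed countable ordinal; this is standard and is essentially what your parenthetical asserts.
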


\begin{proof}
We prove the lemma by induction on $\beta$. 
If $\beta\in \mathbf{E}$ and $P_\beta$ satisfies
the $\aleph_2$-cc, then it is trival that $P_{\beta+1}$
does as well. Similarly $P_{\beta+2}$ satisfies
the $\aleph_2$-cc  since $P_{\beta+1}\star Q_{\beta+1}'$
clearly does, and this poset is dense in $P_{\beta+2}$.
The argument for limit ordinals $\beta$ with cofinality less than
 $\omega_2$ is straightforward, so we
 assume that $\beta$ is a limit with cofinality greater
 than $\omega_1$.
  Let $\{ p_\gamma : \gamma\in \omega_2\}$ be a subset
 of $P'_{\beta }$.   Choose any elementary submodel
  $M$ of $H(\kappa^+)$ such that
   $\{p_\gamma : \gamma\in \omega_2\}\in
   M$, $|M|=\aleph_1$, and $M^\omega\subset M$.
Let $M\cap \omega_2 = \lambda$ and 
let $I= \dom(p_\lambda)\cap M$
and fix any $\mu\in M\cap \beta$
so that $I\subset \mu$. For each $\beta\in\mathbf{E}$
such that $\beta+1\in I$, let $\dot a_\beta \in M$ 
so that $\pi_0(p_\lambda(\beta+1)) = \dot a_\beta$.
That is, $p_\lambda(\beta) = (\dot a_\beta, \dot D_\beta)$
for some $\dot D_\beta\in \mathscr D_\beta$.
Clearly the countable sequence $\{ \dot a_\beta :  
 \beta \in I\cap \mathbf{E}\}$ is an element of $M$.
 Therefore there is a $\gamma\in M$ so
 that $\dom(p_\gamma)\cap \mu = I$
 and so that $\pi_0(p_\gamma(\beta+1)) = \dot a_\beta$
 for all $\beta\in \mathbf{E}$ such that $\beta+1\in I$.
 It follows that $p_\gamma\not\perp p_\lambda$.
\end{proof}

 Now we discuss the Cohen real trick, which, though
 simple and powerful, is burdened with 
 cumbersome  notation.

\begin{lemma} Let $\alpha\in \mathbf{E}$
and\label{Cohengeneric}
 let $ p_0\in P_{\alpha+2}\in M$ be a countable
elementary submodel of $H(\kappa^+)$ and let $\delta=M\cap \omega_1$. 
There is a $(P_{\alpha+2}, M)$-generic condition
 $  p_1<p_0$   satisfying that for all 
 $P_{\alpha}$-generic   filters satisfying $p_1\restriction \alpha\in G_0$
 and $\dot Q_{\alpha}$-generic filters 
 $p_1(\alpha)\in G_1$,
   the collection, in $V[G_0\star G_1]$,
    $$ p_1^\uparrow{}_\alpha = \{  p(\alpha+1) : 
  p\in M\cap P_{\alpha+2}, \  p\restriction(\alpha+1)
   \in G_0\star  G_1, \ p_1 < p \}$$
   is $
    \val_{G_0\star G_1} (\dot Q_{\alpha+1}
    \cap M)$-generic
   over $V[G_0\star (G_1\restriction\delta)]$.  
   
   Moreover, for any   $P_\alpha$-name   $\dot Q$  of a ccc
   poset and $P_\alpha\star \dot Q$-generic
  filter   $G_0\star G_2$,
     $p_1^\uparrow{}_\alpha$ is also generic
   over the model $V[G_0\star G_2][  G_1\restriction\delta]$.
\end{lemma}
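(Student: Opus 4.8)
The plan is to build $p_1$ by the usual countable recursion inside $M$, interleaving three kinds of density requirements, and then to verify the genericity clause using the Cohen real trick: the point is that $\dot Q_{\alpha+1}\cap M$, when evaluated in $V[G_0\star G_1]$, is (up to the names-richness of Definition \ref{conditions}(6)) a countable poset, and $p_1(\alpha)$, being a Cohen condition over $V$, reads off a generic filter for it from the generic branch of $\mathcal C_{\omega_1}=\Fn(\omega_1,2)$ above height $\delta$. First I would enumerate in order type $\omega$ the dense open subsets of $P_{\alpha+2}$ that lie in $M$, together with the (countably many, by $M\cap\omega_1=\delta$ and Definition \ref{conditions}(6)) $P_\alpha\star\dot Q_\alpha$-names in $M$ for elements of $\dot Q_{\alpha+1}\cap M$; call the latter list $\langle \dot q_n : n\in\omega\rangle$. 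I would then construct a $<_E$-descending sequence $\langle p_n : n\in\omega\rangle$ in $M\cap P'_{\alpha+2}$ — using Lemma \ref{omega} at the end and Lemma \ref{1proper} to meet each dense set on an $\mathbf{E}$-fan — but, crucially, at stage $n$ I first extend the Cohen coordinate $p_n(\alpha)$ so that it decides, for each $m\le n$, whether $\dot q_m$ lies in the "model-generic" filter, by the following device: since $\dot q_m$ is forced to be of the form $(\dot a,\dot A)$ with $\dot a$ a $P_{\alpha+1}(I_\alpha\cap\mathbf{E})$-name, I use a fresh block of coordinates of $\mathcal C_{\omega_1}$ in the interval $[\delta,\omega_1)$ indexed by $(m,n)$ to encode a decision "use $\dot q_m$" versus "don't", in such a way that the set of such decisions is forced to be directed and to meet every dense subset of $\val(\dot Q_{\alpha+1}\cap M)$ that belongs to $M$. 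Taking $p_1=p_\omega$ from Lemma \ref{omega} then gives a $(P_{\alpha+2},M)$-generic condition below $p_0$.

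The second step is to show that for generic $G_0\star G_1$ with $p_1\restriction(\alpha+1)\in G_0\star G_1$, the set $p_1^\uparrow{}_\alpha$ defined in the statement is exactly the filter coded by the Cohen coordinate above $\delta$, and hence is $\val_{G_0\star G_1}(\dot Q_{\alpha+1}\cap M)$-generic over $V[G_0\star(G_1\restriction\delta)]$. Here I would argue: every $p\in M\cap P_{\alpha+2}$ with $p\restriction(\alpha+1)\in G_0\star G_1$ and $p_1<p$ has $p(\alpha+1)$ equal (by the richness of $\dot Q_{\alpha+1}$ and the fact that $p_1$ decided everything in $M$) to one of the $\dot q_n$ with $\dot q_n$ marked "use" by the encoding in $p_1(\alpha)$ — so $p_1^\uparrow{}_\alpha$ is contained in the coded filter; conversely, compatibility of $p_1$ with the witnesses chosen at each recursion stage shows every $\dot q_n$ marked "use" actually appears as some such $p(\alpha+1)$, giving the reverse inclusion. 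Genericity over $V[G_0\star(G_1\restriction\delta)]$ is then immediate, because $\val(\dot Q_{\alpha+1}\cap M)$ and its dense subsets from $M$ all lie in $V[G_0\star(G_1\restriction\delta)]$ (they are computed from $P_\alpha$-names and the initial Cohen segment below $\delta$), while the "use"/"don't" decisions are read from the Cohen real above $\delta$, which is mutually generic with everything below $\delta$.

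For the "Moreover" clause, the key observation is that the decisions packaged into $p_1(\alpha)$ are Cohen conditions over $V$ on coordinates $\ge\delta$, and a ccc extension $V[G_0\star G_2]$ by $\dot Q$ does not interfere: $\Fn(\omega_1,2)$ restricted to $[\delta,\omega_1)$ remains a Cohen poset over $V[G_0\star G_2][G_1\restriction\delta]$ because $\dot Q$ is ccc and the poset $\val(\dot Q_{\alpha+1}\cap M)$ is countable, so the Cohen real above $\delta$ is still generic over that larger model, and the same description of $p_1^\uparrow{}_\alpha$ as the coded filter goes through verbatim. I expect the main obstacle to be the bookkeeping in the first step — arranging the encoding so that the coded filter is simultaneously directed, an actual filter on $\val(\dot Q_{\alpha+1}\cap M)$, and meets all the right dense sets, while keeping each $p_n$ inside $M$ and $<_E$-below its predecessor; once that recursion is set up correctly, the verification of genericity and the ccc-robustness are routine consequences of mutual genericity of the tail Cohen real.
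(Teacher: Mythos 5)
Your high-level idea is the right one --- exploit the fact that $M\cap\dot Q_{\alpha+1}$ evaluates to a countable poset and let a block of Cohen coordinates of $\dot Q_\alpha$ above $\delta$ generate a filter for it --- but as written the proposal has two genuine gaps. First, you never construct $p_1(\alpha+1)$. The set $p_1^\uparrow{}_\alpha$ is defined by the clause $p_1<p$, so for it to coincide with the coded filter you must produce a single name $p_1(\alpha+1)\in\dot Q_{\alpha+1}$ that is forced to lie below $p(\alpha+1)$ for \emph{every} $p\in M$ whose $(\alpha+1)$-coordinate the coding selects; a $<_E$-descending recursion through dense sets does not yield this, because which conditions are selected depends on the generic $G_1$ and is not known in $V$. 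The paper's construction of the pair $(\dot a_\omega,\dot A_\omega)$ --- with $\dot a_\omega$ a $P_{\alpha+1}(I_\alpha\cap\mathbf E)$-name for $\{\delta\}$ together with the union of the first coordinates of the selected conditions, and $\dot A_\omega$ the intersection of all cubs in $M\cap\mathscr D_\alpha$ containing $\dot a_\omega$ --- is exactly the missing step, and verifying it is a legal condition uses that the $(M,P_\alpha)$-generic $\bar p_1$ forces $\delta\in\dot C$ for every cub name $\dot C\in M$.

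Second, your coding scheme is internally inconsistent about who makes the ``use $\dot q_m$'' decisions. If, as you write, you ``extend the Cohen coordinate $p_n(\alpha)$ so that it decides'' membership, then the coded filter is an object of $V$ and can only meet the dense sets enumerated in your recursion, i.e.\ those with names in $M$; but the lemma demands genericity over $V[G_0\star(G_1\restriction\delta)]$ and, in the Moreover clause, over $V[G_0\star G_2][G_1\restriction\delta]$, whose relevant dense sets (e.g.\ the sets $E(\bar q)$ used in Lemma \ref{cccpreserve}) need not lie in $M$ or even in $V$. The decisions must instead be read from the generic on coordinates that $p_1(\alpha)$ leaves free, and then one must also guarantee that the resulting set of decisions is a filter: an independent bit per name does not give directedness. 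The paper handles this by passing to the quotient $R=(M\cap\dot Q_{\alpha+1})/\tilde G$ with the auxiliary coarser ordering $<_R$, observing that this quotient is a countable atomless poset and fixing an isomorphism $\psi$ with $\mathcal C_{(\delta,\delta+\omega)}$ \emph{in $V[\tilde G]$} (it cannot be fixed in $V$), so that the image of the generic block is automatically a filter generic over any intermediate model not containing that block. Your treatment of the Moreover clause is in the right spirit, but it only becomes a proof once these two points are repaired.
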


\begin{proof} 
Let $\dot Q$ be any $P_\alpha$-name of a ccc poset. 
Choose any  $\bar p_1< p_0\restriction(\alpha+1)$ 
that is 
  $(M, P_{\alpha})$-generic with $\bar p_1(\alpha)
   = p(\alpha)$. 
   We will let $p_1\restriction\alpha  
   =\bar p_1\restriction \alpha$ and
   then  we simply have to  choose a value for
    $p_1(\alpha+1)$.
    We may assume that $\bar p_1\restriction
    \mathbf{E} = p_0\restriction\mathbf{E}$.
    Let $\tilde G$ denote the filter $(G_0\star G_1)\cap 
    P_{\alpha+1}(I_\alpha\cap \mathbf{E})$
    and let   $R= (M\cap  \dot Q_{\alpha+1})/\tilde G$. 
       For $r\in R$
      we  may regard $r$
       in the extension $V[\tilde G]$ to have
      the form $(a_r, \dot A_r)$,
      with $a_r\subset\omega_1$, because,
       for each $(\dot a,\dot A)\in M\cap \dot Q_{\alpha+1}$,
       $\dot a$ has support contained
       in $P_{\alpha+1}(I_\alpha\cap \mathbf{E})$.
       We have no such reduction for $\dot A$.
  We adopt the subordering, $<_R$, on $R$ where
 $(a,\dot A)<_R (b,\dot B)$ 
 in $R$  will mean that
  $\mathbf{1}_{P_{\alpha+1}}\Vdash \dot A\subset \dot B$. 
  The fact that $(a,\dot A)\in R$ already means
  that $\mathbf{1}_{P_{\alpha+1}}\Vdash a\subset \dot A$.
  If  $p\in M\cap P_{\alpha+1}$ and
   $(a,\dot A_1)\in R$ is such that
    $p\Vdash (a,\dot A_1) < (b, \dot B)$, then
    there is an $(a,\dot A)\in R$ such that
     $p\Vdash \dot A = \dot A_1$ and
      $(a,\dot A)<_R (b,\dot B)$.

      The quotient poset $(R/\tilde{G},<_R)$ is isomorphic
      to $\mathcal C_{\omega}$. Let $\psi \in V[\tilde G]$
      be an isomorphism from 
      $\mathcal C_{ (\delta,\delta+\omega)}$
      to $(R/\tilde{G},<_R)$.  We regard
      $\mathcal C_{ (\delta,\delta+\omega)}$ as the
      canonical subposet of $\dot Q_{\alpha}$ and
      let $G^\delta_\alpha$ denote a generic filter for
      this subposet of $\dot Q_\alpha$. Now
      we have, in the extension
       $V[\tilde G][G^\delta_\alpha]$,
       a $<_R$-filter $R^\delta_\alpha \subset R$
      given by $\{ \psi(\sigma) : \sigma\in G^\delta_\alpha\}$. 
  Let $a_\omega =\{\delta\}\cup \bigcup
  \{ a_r : r\in R^\delta_\alpha\}$. Note that $\bar p_1$
  forces that $\delta\in \dot C$ for all $\dot C\in 
   M\cap \mathcal D_{\alpha}$. By the construction,
    it follows that we may fix a $P_{\alpha+1}$-name,
     $\dot a_\omega$, for $a_\omega$, that has support
     contained in $I_\alpha\cap \mathbf{E}$. 
     Let $\dot A_\omega$ be the
      $P_{\alpha+1}$-name satisfying that
      $\bar p_1$ forces that
       $\dot A_\omega$ equals the intersection
       of all $\dot C \in \mathcal D_\alpha\cap M$ 
       such that $\dot a_\omega\subset \dot C$.
      It follows that for $r\in R^\delta_\alpha$
      and  $\tilde p \restriction \alpha+1<\bar p_1$,
      $\tilde p(\alpha) \in G^\delta_\alpha$,
      and $\tilde p(\alpha+1) =r$, we have that
      $
      \tilde p \wedge r\Vdash \dot A_\omega\subset
       \dot A_r$  
       (and this takes place in $V[\tilde G]$).
We may choose $\dot A_\omega$ so that 
$\tilde p\Vdash \dot A_\omega = \omega_1$ for all 
$\tilde p\perp \bar p_1$ in $P_{\alpha+1}$. 
It then follows that $(\dot a_\omega,\dot A_\omega)$
is an element of $\dot Q_{\alpha+1}$. We now
define $p_1$ so that $p_1\restriction \alpha+1 = \bar p_1$
and $p_1(\alpha+1) = (\dot a_\omega , \dot A_\omega)$. 
The fact that $p_1$ is $(M,P_{\alpha+2})$-generic follows
from the stronger claim below.

\begin{claim} Let $G_0$ be a $P_{\alpha}$-generic
with $
\bar p_1
\restriction\alpha\in G_0$
and let
 $G_1$ be a 
$\dot Q_\alpha $-generic filter
with $\bar p_1(\alpha)\in M\cap G_1$.
Also let $G_0\star G_2$ be $P_\alpha\star \dot Q$-generic.
Let $\sigma\in \mathcal C_{(\delta,\delta+\omega)} $ be arbitrary.
 Let $\dot D$ be a 
 $P_{\alpha+1}\star \dot Q$-name of a dense 
 subset of 
 $
    \val_{G_0\star G_1} (\dot Q_{\alpha+1}    \cap M)$.
    Then there is a $\tau\supset \sigma$ 
    such that $\tau\Vdash p_1^\uparrow{}_\alpha\cap 
    \val_{G_0\star (G_1 \times  G_2)}(\dot D)\neq\emptyset$.
    \end{claim}

 \bgroup
 
 \def\proofname{Proof of Claim}

\begin{proof}
 Fix the generic filter  $\tilde G\subset G_0\star G_1$
 as used in the construction of $(\dot a_\omega, \dot A_\omega)$
 and let $\psi : \mathcal C^\delta_\alpha
  \rightarrow (R/\tilde G, <_R)$ denote the above mentioned 
 isomorphism. 
    Let $(b , \dot B) = \psi(\sigma)$
    and, using 
    the density of 
      $\val_{G_0\star (G_1\times G_2)}(\dot D)$,
    choose  $(a,A) < (b, \val_{G_0\star G_1}(\dot B))$,
so that    $(a, A)\in \val_{G_0\star( G_1\times G_2)}(\dot D)$.
By elementarity, choose  $(\dot a,\dot A)\in M
\cap \dot Q_{\alpha+1}$ such that 
  $    \val_{G_0\star G_1}( (\dot a, \dot A)) = (a,A)$.
  Again by elementarity and using that
   $\bar p_1$ is $(M,P_{\alpha+1})$-generic,
    there is a $p\in M\cap (G_0\star G_1)$ such
    that $p\Vdash \dot A\subset \dot B$. 
 Now choose $\tau\supset \sigma$ so that
  $\psi(\tau) = (a, \dot A_1)$ satisfies that
   $(a,\dot A_1) <_R (b,\dot B)$ and $p\Vdash
    \dot A_1 = \dot A$. It follows that  
     $\tau\Vdash (a,\dot A_1) \in 
     \val_{G_0\star (G_1\times G_2)}(\dot D)$. 
     Since $p_1\wedge \tau$ also forces that
      $p_1(\alpha+1) < (a,\dot A_1) $ we
      have that $p_1\wedge \tau\Vdash (a,\dot A_1)
       \in p_1^\uparrow{}_{\alpha}$.  
\end{proof}
 
 \egroup

 This completes the proof of the Lemma.
 \end{proof}

\begin{lemma}
 Let $\lambda < \kappa$ with $\lambda\in\mathbf{E}$ 
 and\label{cccpreserve}
  let $\dot Q$ be a $P_\lambda$-name of a ccc poset.
  Then $P_\kappa$  forces that $\dot Q$ is ccc.  
\end{lemma}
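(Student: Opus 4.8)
The plan is to argue by contradiction, combining the properness apparatus of Lemmas~\ref{omega} and~\ref{1proper} with the Cohen real trick of Lemma~\ref{Cohengeneric}. Suppose $p_0\in P_\kappa'$ forces that $\dot A=\langle\dot q_\xi:\xi<\omega_1\rangle$ enumerates an uncountable antichain of $\dot Q$; we may as well take $p_0$ to force $\dot A$ maximal. Fix a countable $M\prec H(\kappa^+)$ with $p_0,\dot A,\dot Q,P_\kappa,\lambda,\mathscr I\in M$ and set $\delta=M\cap\omega_1$. The aim is to produce an $(M,P_\kappa)$-generic $p_1\le p_0$ whose construction is organized so that the part of the generic above $\lambda$ that $M$ can see is, in effect, a Cohen-type object over a model in which $\dot Q$ is still ccc, and then to conclude that $\dot q_\delta$ must after all be compatible with some $\dot q_\eta$ with $\eta<\delta$ --- contradicting $p_0$.

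First, the construction. Since $\dot Q$ is a $P_\lambda$-name, nothing below $\lambda$ affects the ccc-ness of $\dot Q$, so for $p_1\restriction\lambda$ we only need an $(M,P_\lambda)$-generic condition, obtained exactly as in the proof that $P_\lambda$ is proper (Lemmas~\ref{omega} and~\ref{1proper}). Above $\lambda$ we run a recursion along $M\cap[\lambda,\kappa)$. At each $\alpha\in\mathbf E\cap M$ with $\alpha\ge\lambda$ we put $\alpha+1$ into $\dom(p_1)$ and let $p_1(\alpha+1)$ be a condition of the form $(\dot a_\omega,\dot A_\omega)$ produced as in Lemma~\ref{Cohengeneric} (supplied, in its ``moreover'' clause, with the given ccc name $\dot Q$), so that the generic for $\dot Q_{\alpha+1}$ that $M$ sees --- the set $p_1^\uparrow{}_\alpha$ --- is Cohen-generic over the extension built so far and stays so even after a generic for $\dot Q$ is adjoined; the generic for $\dot Q_\alpha=\mathcal C_{\omega_1}$ that $M$ sees is a generic for the countable poset $\mathcal C_\delta$. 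Since each coordinate $\alpha+1$ used here lies outside $\mathbf E$, we have $\dom(p_1)\cap\mathbf E=\dom(p_0)\cap\mathbf E$ and $p_1$ is a legitimate condition; at all other coordinates of $M$ and at limit stages of the recursion we maintain $(M,P_\kappa)$-genericity exactly as in Lemmas~\ref{omega} and~\ref{1proper}.

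Now the contradiction. Let $G\ni p_1$ be $P_\kappa$-generic, $G_\lambda=G\cap P_\lambda$, $Q=\dot Q[G_\lambda]$; by hypothesis $Q$ is ccc in $V[G_\lambda]$. Since $p_1$ is $(M,P_\kappa)$-generic and $P_\kappa$ is proper, $M[G]\prec H(\kappa^+)[G]$ and $M[G]\cap\omega_1=\delta$, so $\dot A[G]$ is a genuine uncountable maximal antichain of $Q$ and $\{\dot q_\eta[G]:\eta<\delta\}$ is predense in the collection $Q\cap M[G]$ of elements of $Q$ that $M[G]$ contains. By the construction of $p_1$, the data of $G$ that $M$ sees above $\lambda$ lies in a forcing extension $N$ of $V[G_\lambda]$ by a Cohen-type (Knaster) poset --- this is precisely what the two genericity conclusions of Lemma~\ref{Cohengeneric} deliver, the ``moreover'' clause being what lets these conclusions be chained through the several $\mathbf E$-coordinates of $M$ above $\lambda$ --- so that $Q$ remains ccc in $N$ and, crucially, the predense set $\{\dot q_\eta[G]:\eta<\delta\}$ lies in $N$ and witnesses there that every element of $Q$ is compatible with one of the $\dot q_\eta[G]$. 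In particular $\dot q_\delta[G]\in Q$ is compatible with some $\dot q_\eta[G]$, $\eta<\delta$, contradicting that $p_0$ forces $\dot A$ to be an antichain.

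The step I expect to be the real obstacle is the last one: upgrading ``$\{\dot q_\eta[G]:\eta<\delta\}$ is predense in $Q\cap M[G]$'' to ``$\{\dot q_\eta[G]:\eta<\delta\}$ is predense in $Q$'' --- i.e. showing that no element of $Q$ added by the part of $P_\kappa$ above $\lambda$ can be incompatible with everything $M$ sees. This is exactly where one needs that that part of the iteration, as seen through $M$, is Cohen-over-a-model-where-$Q$-stays-ccc rather than merely $(M,\cdot)$-generic; Lemma~\ref{Cohengeneric} (with its ``moreover'' clause) is engineered to supply this, and organizing its output, together with the properness machinery of Lemmas~\ref{omega} and~\ref{1proper}, into a single condition $p_1$ is where the technical work concentrates.
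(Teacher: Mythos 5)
Your overall strategy is the right one and your construction of $p_1$ is essentially the paper's, but the step you yourself flag as ``the real obstacle'' is exactly where the proof lives, and your proposal does not contain it. The inference ``$\{\dot q_\eta[G]:\eta<\delta\}$ is an antichain lying in a model $N$ in which $Q$ is ccc, hence it witnesses that every element of $Q$ is compatible with some $\dot q_\eta[G]$'' is a non sequitur: an antichain of a ccc poset that happens to belong to some intermediate model need not be maximal there. What is actually needed is a genericity statement for the \emph{product} $Q\times P_\kappa$, and the paper's proof is organized around precisely this: one shows by induction on $\alpha\in\mathbf{E}$ with $\lambda\le\alpha\le\kappa$ that every $p\in \bar P_\alpha\cap M$ has an extension $p_M$ such that $(1_Q,p_M)$ is $(M,Q\times\bar P_\alpha)$-generic (the point being that the same $p_M$ works simultaneously for all $q\in Q$). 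That formulation is what lets one conclude, via the dense set $\{(q,p): (\exists a)\, (p\Vdash a\in\dot A \ \text{and}\ q\le a)\}$, that $\dot A\cap M$ is predense in all of $Q$ and not merely in $Q\cap M[G]$.

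Concretely, two things are missing. First, the induction itself: applying Lemma \ref{Cohengeneric} at a coordinate $\beta>\lambda$ with $\dot Q$ as the auxiliary ccc name already presupposes that $P_\beta$ forces $\dot Q$ to be ccc, i.e.\ an instance of the lemma for a smaller index; moreover the ``moreover'' clause gives genericity of $p_1^\uparrow{}_\beta$ over a model containing the \emph{full} generic $G_\beta$ below that coordinate, not just its trace on $M$, so there is no single Knaster extension $N$ of $V[G_\lambda]$ capturing ``everything $M$ sees above $\lambda$''; the chaining must be organized as an induction with the product-genericity as the inductive hypothesis. Second, the successor step of that induction requires the density computation that converts coordinate-wise Cohen genericity into predensity over all of $Q$: given a dense open $D\in M$ of $P_{\beta+2}\star Q$, one passes to $E=D/(G_\beta\star G_1)\subset R\times Q$ where $R=\dot Q_{\beta+1}/(G_\beta\star G_1)$, uses the ccc-ness of $Q$ from the inductive hypothesis to see that each $E_r\cap M[G_{\beta+1}]$ is predense in $Q$, deduces that for every $\bar q\in Q$ the set $E(\bar q)$ is dense in $R\cap M[G_{\beta+1}]$ and belongs to $V[G_\beta\star(G_1\restriction\delta)]$, and only then invokes Lemma \ref{Cohengeneric} to meet it. This computation --- the place where ccc-ness of $Q$ actually enters --- is the content you have replaced with the assertion that Lemma \ref{Cohengeneric} ``is engineered to supply this''.
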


\begin{proof}
Let $G$ be a $P_\lambda$-generic filter and
let $Q = \val_G(\dot Q)$.  Since $P_\kappa$ satisfies
the $\aleph_2$-cc, we can assume
that $Q$ is of the form $(\omega_1, <_Q)$. 
We work in the extension 
 $V[G]$ and we view, for each $\lambda< \alpha 
 \leq \kappa$, 
  $\bar P_\alpha=P_\alpha/G$ as a subset
 of $P_\alpha$.
We prove, by induction
on $\lambda\leq\alpha\in \mathbf{E}$, that 
for any countable elementary submodel $\{Q,\lambda,
 \bar P_\alpha\}\in M$ and any $p\in \bar P_\alpha\cap M$,
  there is a $p_M<_E p$ such that
   $(1_Q,p_M) $ is $(M,Q\times \bar P_\alpha)$-generic.
   Note that this inductive hypothesis, i.e. the fact that
   it is $(1_Q,p_M)$ that is  the generic  condition
   rather than $(q,p_M)$ for some other $q\in Q$,
     is equivalent to the statement 
   that $P_\alpha$ preserves that $Q$ is ccc.
   
   The proof at limit steps follows the standard proof
    (as in \cite{properbook})
   that the countable support iteration of proper posets
   is proper.  We feel that this can be skipped. 
   So let $\alpha = \beta+2$ for some $\beta\in \mathbf{E}$. 
   Let $M$ be a suitable countable elementary submodel
   and let $p\in P_\alpha\cap M$ (such
   that $p\restriction \lambda\in G$). 
   Let $M\cap \omega_1=\delta$.
   By the inductive hypothesis, we can assume that we
    have $\bar p_1\in P_\beta$ so that,
     $\bar p_1\restriction\lambda\in G$, 
     $\bar p_1<_E p\restriction\beta$ and so that 
     $(1_Q,\bar p_1)$
     is an $(M,Q\times  P_\beta)$-generic condition.
     Of course it is also clear that $(1_Q,\bar p_1)$
     is an $(M, Q\times P_{\beta+1})$-generic 
     condition.  Now let
   $ p_1\in P_{\beta+2}$ be chosen as in Lemma 
 \ref{Cohengeneric}. That is,  $p_1$ is chosen
 so that for any $P_{\beta}$-generic filter $G_\beta\supset
 G$ with $p_1\restriction\beta\in G_\beta$,
 any $\mathcal C_{\omega_1}$-generic $G_1$
 with $p_1(\beta)\in G_1$, and,
 since  $  Q $ is ccc in $V[G_\beta]$,
  any $Q$-generic filter
      $G_Q$,  we have that
  $p_1^\uparrow{}_\beta$ is generic
 over $V[G_\beta\star (G_1\times G_Q)]$.
 Let $G_{\beta+1} = G_\beta\star G_1$.

  Let $D\in M$ be any dense open subset
  of $P_{\beta+2}\star Q$. Let $R$ denote
  $\dot Q_{\beta+1}/(G_{\beta}\star G_1)$.
  It follows that
   $ D/(G_{\beta}\star G_1)  $ or 
   $$E
    = \{ (r,q  ) : (\exists d\in D)\ 
(  d{\restriction}\beta{+}1\in G_{\beta}\star G_1 \ 
\&\
  d = d{\restriction}\beta{+}1\star (r,  q))\}$$
  is a dense open subset of 
  $R\times Q$
  and $E\in M[G_{\beta+1}]$. By standard product
  forcing theory,  we have that
 for each $r\in R$, $E_r  = \{ q\in Q: 
  (\exists s\in R)(s<r \ \&\ (s,q )\in E\})$ is a dense
  subset of $Q$. For each $r\in R \cap M[G_{\beta+1}]$,
   $E_r\in M[G_{\beta+1}]$ and so, $E_r\cap M[G_{\beta+1}]$
   is a predense subset of $Q$.  
  This implies that, for each $\bar q\in Q$,
   the set $E(\bar q) = \{ s\in R\cap M[G_{\beta+1}] : 
    (\exists (s,q)\in E\cap M[G_{\beta+1}])( \bar q\not\perp q)\}$
    is a dense subset of $R\cap M[G_{\beta+1}]$. 
    Although $E(\bar q)$ need not be an element of
    $M[G_{\beta+1}]$, it is an element of 
     $V[G_{\beta} \star (G_1\restriction\delta) ]$.   Therefore, 
      by Lemma \ref{Cohengeneric}, 
       $E(\bar q)\cap p_1^\uparrow{}_\beta$
       is not empty   
       for all $\bar q\in G_Q$. By elementarity,
        it then follows that $p_1$ is an
         $(M,P_{\beta+2}\star Q)$-generic condition.
  \end{proof}

\section{S-space tasks}

Following \cite{ARS} and \cite{stevoSspace} we define
a poset of finite subsets of $\omega_1$ separated by a cub.

\begin{definition}
 For a family $
 \mathcal U = \{ U_\xi : \xi\in\omega_1\}$ and a cub $C\subset \omega_1$,
  define the poset $Q(\mathcal U,C) \subset [\omega_1]^{<\aleph_0}$, 
  to be the set of finite sets $H\subset\omega_1$ such that
  for $\xi<\eta$ both in $H$
\begin{enumerate}
 \item   $\xi\notin U_\eta$  and $\eta\notin U_\xi$, 
  \item there is a $\gamma\in C$ such that $\xi <\gamma\leq\eta$.
\end{enumerate}
$Q(\mathcal U, C)$ is ordered by $\supset$. 
\end{definition}

\begin{definition}
 A family $\mathcal U=\{U_\xi : \xi <\omega_1\}$ is an S-space
 task if it satisfies: 
\begin{enumerate}
 \item $\xi\in U_\xi \in [\omega_1]^{<\aleph_1}$,
 \item every uncountable $A\subset\omega_1$ has a countable
 subset that is not contained in any
  finite union from the family $\mathcal U$. 
\end{enumerate}
\end{definition}

\begin{remark} If  $\mathcal T$ is a regular locally countable topology on
 $\omega_1$ that contains no uncountable free sequence
 (see Definition \ref{freesequence}),
 then each neighborhood assignment $\{ U_\xi : \xi\in\omega_1\}$
 consisting of open sets with   countable closures, 
 is an S-space task. An uncountable $A\subset\omega_1$ failing
 property (2) would contain an uncountable free sequence.
  Suppose
 that there is a 
  cub $C\subset\omega_1$ such that
  $Q(\mathcal U,C)$ is ccc.  
  Then, as usual, there is a 
   $q\in Q(\mathcal U,C)$ such that any generic filter
   including $q$ is uncountable. 
    If $G\subset Q(\mathcal U,C)$
   is a filter (even pairwise compatible),
    then $\bigcup G$ is a discrete subspace of 
   $(\omega_1,\mathcal T)$. 
   Of course this cub $C$ can be assumed to satisfy
  that if $\xi<\eta$ are separated by $C$, then $\eta\notin
   U_\xi$. This means that requirement (1) in
   the 
   definition of $Q(\mathcal U,C)$
   can be weakened to only 
    require that $\xi\notin U_\eta$.
    \end{remark}

 The following result is a restatement of
 Lemma 1 
 from
  \cite{stevoSspace}.   It also uses
  the Cohen real trick.  
  We present a proof
  that is more adaptable to the modifications
  needed for the consistency with $\mathfrak c > \aleph_2$.

\begin{proposition}
Let $R$ be a ccc poset
and let 
  $\mathcal U=\{\dot U_\xi : 
  \xi\in\omega_1\}$ be a sequence of
  $R$-names such that $\mathcal U$ is
  forced to be   an S-space task\label{step1}.
Then 
 $R\times P_{2 }  $ forces that 
 for every $n\in\omega$, every 
 uncountable pairwise disjoint subfamily  
 $ \mathcal H$ of $
 Q(\mathcal U,\dot C_1)\cap [\omega_1]^n$,
  has
 a countable subset $\mathcal H_0$ 
 satisfying that, for some $\delta\in\omega_1$ and 
 all $F\in [\omega_1\setminus \delta]^n$,
 there is an $H\in\mathcal H_0$ such
 that $H\cap \bigcup\{ U_\xi : \xi \in F\} =\emptyset$.
  In
 particular, $R\times P_2  $ forces
 that $Q(\mathcal U,\dot C_1)$ is ccc.
\end{proposition}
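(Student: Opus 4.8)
The plan is to derive the ``in particular'' clause from the displayed statement by a routine $\Delta$-system argument, and to prove the displayed statement itself by induction on $n$, the inductive step being a name-theoretic argument built on the Cohen real trick of Lemma~\ref{Cohengeneric} (cf.\ \cite{stevoSspace}). Throughout we treat $P_2$ as a dense subposet of the two-step iteration $\dot Q_0\star\dot Q_1=\mathcal C_{\omega_1}\star\dot Q_1$, so that $\dot C_1$ is the generic cub added by $\dot Q_1$ over the Cohen extension by $\dot Q_0$.

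\textbf{Reducing the ccc assertion.} Suppose, in the extension by $R\times P_2$, that $\mathcal A\subseteq Q(\mathcal U,\dot C_1)$ is uncountable; a standard $\Delta$-system argument reduces to the case that $\mathcal A$ is pairwise disjoint with all members of a fixed size $n$. Apply the displayed statement to $\mathcal H=\mathcal A$ to get a countable $\mathcal H_0\subseteq\mathcal A$ and a $\delta$. Choose $\delta'>\delta$ above $\bigcup\mathcal H_0$ and above the countable set $\bigcup\{U_\xi:\xi\in H,\ H\in\mathcal H_0\}$, put $\gamma^*=\min(\dot C_1\setminus\delta')$ (which exists, $\dot C_1$ being a cub by Lemma~\ref{itiscub}), and pick $F\in\mathcal A$ with $\min F>\gamma^*$. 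If $H\in\mathcal H_0$ satisfies $H\cap\bigcup_{\xi\in F}U_\xi=\emptyset$, then $H\cup F$ obeys both clauses defining $Q(\mathcal U,\dot C_1)$ --- clause (1) because $H\subseteq\delta'<\min F$ while $U_h\subseteq\delta'$ for every $h\in H$, and clause (2) because $\gamma^*\in\dot C_1$ lies strictly between $\max H$ and $\min F$ --- so $H$ and $F$ are compatible and $\mathcal A$ is not an antichain.

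\textbf{Base case and setup of the induction.} For $n\le1$ the displayed statement is immediate: $\bigcup\mathcal H$ is uncountable, hence (the family being an S-space task) has a countable subset $B$ lying in no finite union from $\mathcal U$, and then $\mathcal H_0=\{\{b\}:b\in B\}$ with $\delta=0$ works. Assume the statement for $n$, and let $p_0\in R\times P_2$ force that $\dot{\mathcal H}$ is an uncountable pairwise disjoint subfamily of $Q(\dot{\mathcal U},\dot C_1)\cap[\omega_1]^{n+1}$; we must produce $q\le p_0$, a name $\dot{\mathcal H}_0$, and a $\delta$ with $q$ forcing the conclusion for $n+1$. Fix a countable $M\prec H(\theta)$ ($\theta$ large regular) with $R,P_2,\dot{\mathcal U},\dot{\mathcal H},p_0\in M$ and set $\delta=M\cap\omega_1$. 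Applying Lemma~\ref{Cohengeneric} at $\alpha=0$ with the ccc poset $\dot Q:=R$, and arguing as in the proof of Lemma~\ref{cccpreserve} with $\beta=0$, we obtain $p_1\le p_0\restriction P_2$ such that $q:=(p_0\restriction R,\,p_1)$ is $(M,R\times P_2)$-generic and, below $q$, the set $p_1^\uparrow{}_0$ --- equivalently the trace $\dot C_1\cap\delta$ of the generic cub --- is generic over $V[G_R][G_c\restriction\delta]$ for $\dot Q_1\cap M$; since $(\dot Q_1\cap M)/\tilde G$ is isomorphic to $\mathcal C_\omega$ (proof of Lemma~\ref{Cohengeneric}), this trace is in effect a Cohen real over $V[G_R][G_c\restriction\delta]$. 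We shall also use that $\bigcup_{\xi\in F}U_\xi\cap\delta\in V[G_R]$ for every finite $F$ above $\delta$ (because $\dot{\mathcal U}$ is an $R$-name), and that $\mathcal U$ remains an S-space task in the final model, which one checks using that $\mathcal C_{\omega_1}$ is ccc and $\dot Q_1$ is $\sigma$-closed.

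\textbf{The inductive step, and the main obstacle.} Inside $M$, let $\mathcal G$ name $\{H\setminus\{\max H\}:H\in\dot{\mathcal H}\}$, a family of pairwise disjoint $n$-sets each in $Q(\dot{\mathcal U},\dot C_1)$ which $q$ forces uncountable; by the inductive hypothesis applied inside $M$ fix names in $M$ for a countable $\mathcal G_0\subseteq\mathcal G$ and a $\delta_0\in M$ witnessing the statement for $\mathcal G,n$, and let $\dot{\mathcal H}_0$ name $\{H\in\dot{\mathcal H}:H\setminus\{\max H\}\in\mathcal G_0\}$, a countable subfamily of $\dot{\mathcal H}$. For a given $F\in[\omega_1\setminus\delta]^{n+1}$, the inductive hypothesis applied to the $n$-set $F\setminus\{\max F\}$ (which lies above $\delta_0<\delta$) produces $G\in\mathcal G_0$ with $G\cap\bigcup_{\xi\in F\setminus\{\max F\}}U_\xi=\emptyset$; writing $H\in\dot{\mathcal H}_0$ for the set with $H\setminus\{\max H\}=G$, what remains is to also secure $G\cap U_{\max F}=\emptyset$ and $\max H\notin\bigcup_{\xi\in F}U_\xi$ --- and, the essential point, to do so with the \emph{single} countable family $\dot{\mathcal H}_0$ serving \emph{all} $F\in[\omega_1\setminus\delta]^{n+1}$ simultaneously. \textbf{This coordination is the step I expect to be the main obstacle:} each countable set $\bigcup_{\xi\in F}U_\xi$ meets only countably many members of the pairwise disjoint $\dot{\mathcal H}$, yet there are $\aleph_1$ many such $F$ and only one countable $\dot{\mathcal H}_0$. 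It is forced by interleaving the choices of $\mathcal G_0,\delta_0,\dot{\mathcal H}_0$ with a density argument over the countable poset $\mathcal C_\omega\cong(\dot Q_1\cap M)/\tilde G$ that codes $\dot C_1\cap\delta$: since that trace is Cohen-generic over $V[G_R][G_c\restriction\delta]$, a model already containing every set $\bigcup_{\xi\in F}U_\xi\cap\delta$, and since the members of $\dot{\mathcal H}$ are $\dot C_1$-separated --- so that each $\max H$ lies in a strictly later $\dot C_1$-interval than $H\setminus\{\max H\}$, which is what lets the ``top'' be placed generically --- a single dense subset of $\mathcal C_\omega$, assembled from the inductive hypothesis and the S-space task property as applied inside $M$, makes $\dot{\mathcal H}_0$, as read off from $M[G]$, meet $\{H\in\dot{\mathcal H}:H\cap\bigcup_{\xi\in F}U_\xi=\emptyset\}$ for every $F$. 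The ``moreover'' clause of Lemma~\ref{Cohengeneric} --- genericity of $p_1^\uparrow{}_0$ even over a model of the form $V[G_0\star G_2][\cdots]$ for arbitrary ccc $\dot Q$ --- is exactly what keeps this argument valid with the auxiliary poset $R$ present. With $\dot{\mathcal H}_0$ and $\delta$ so chosen, $q$ forces them to witness the statement for $n+1$, completing the induction and the proof.
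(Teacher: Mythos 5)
Your reduction of the ccc assertion to the displayed statement is correct (it is exactly the paper's remark preceding the proposition), and the base case $n\le 1$ is fine. But the inductive step is where the entire content of the proposition lies, and it is not carried out; worse, the decomposition by induction on $n$ does not obviously close. Since $\mathcal H$ is pairwise disjoint, each bottom $G=H\setminus\{\max H\}$ determines its top $\max H$ uniquely, so the inductive hypothesis --- which produces a countable family $\mathcal G_0$ of bottoms with no control whatsoever over the attached tops --- gives you no way to guarantee, for a given $F$, that some $G\in\mathcal G_0$ avoiding $\bigcup_{\xi\in F\setminus\{\max F\}}U_\xi$ also satisfies $G\cap U_{\max F}=\emptyset$ and $\max H\notin\bigcup_{\xi\in F}U_\xi$. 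You flag this coordination as ``the main obstacle'' and then dispose of it by asserting that ``a single dense subset of $\mathcal C_\omega$'' does the job; that dense set is never defined, its density is never argued, and a single dense set cannot suffice in any case: meeting one dense set yields one condition, hence one certified $H$, whereas a witness is needed for each of $\aleph_1$ many $F$.

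The paper resolves the coordination differently: the countable family is (the set of values of) $\{\dot H_\xi:\xi<\delta\}$ for $\delta=M\cap\omega_1$, and for \emph{each} $F\in[\omega_1\setminus\delta]^n$ there is a \emph{separate} dense set $\mathcal W_F\subset M\cap(\mathcal C_{\omega_1}\star\dot{\mathscr J})$, all of which lie in the fixed model $V[G_R\times(G_1\restriction\delta)]$ over which $p_1^\uparrow{}_0$ is generic, so the one generic filter meets all of them. The density of each $\mathcal W_F$ is the hard part: it rests on the notion of $\Gamma_\xi(H,(r,p))$ being $\omega_1$-full (a downward recursion on $|H|$ for the fixed $n$, not a reduction of $n+1$ to $n$), on Claim \ref{two} (fullness plus the task property lets one build $H\in M$ avoiding $\dot W_F$ coordinate by coordinate, together with a condition in $M$ forcing $H=\dot H_\xi$), and on Claim \ref{four}, whose proof forces $\dot C_1\setminus\delta_0$ into the trace of an elementary chain so that the $\dot C_1$-separation of $\dot H_{\delta_0}$ distributes its elements across the models $M_{\alpha_i}$ and produces a contradiction with non-fullness. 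None of this machinery appears in your proposal. A further unargued step: you assert that $\mathcal U$ remains an S-space task after forcing with $P_2$; the paper never needs this, because it only ever applies the task property to $R$-names of uncountable sets lying in $M$ (as in Claim \ref{two}), which is the form in which the hypothesis is actually usable.
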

 
\begin{proof}
 Of course $P_{2 }$ is isomorphic to 
  $\mathcal C_{\omega_1}
  \star \dot{\mathscr J}$.
  Fix any $n\in\omega$ and let
  $\{ \dot H_\xi : \xi \in \omega_1\}$ 
  be $R\times P_{2 } $-names of 
  pairwise disjoint   elements of
   $[\omega_1]^n\cap Q(\mathcal U, \dot C_1)$.
     Since we can
    pass to an uncountable subcollection 
    of $\{ \dot H_\xi : \xi\in\omega_1\}$ 
  we may  assume   that for all $\xi\in\omega_1$, 
   it is forced that 
    there is a $\delta\in \dot C_1$ such that
     $\xi < \delta\leq \min(\dot H_\xi)$.

    For each
   $(r,p)\in R\times P_2$
   and $H\in [\omega_1]^n$,
    let $\Gamma_\xi(H,(r,p))$ 
     be the set   $ \{  s  \in R : (\exists q\in P_2)(
        (s,q)<(r,p)\ \&  \ (s,q)  \Vdash H=\dot H_\xi)\}$.
     In other words, $\Gamma_\xi(H,(r,p))$ is not empty
     if and only if $(r,p)\not\Vdash H\neq \dot H_\xi$.
We say that $\Gamma_\xi(H,(r,p))$ is 
 $\omega_1$-full simply if it is not empty.
 \medskip

   Now we define what it means for 
   $\Gamma_\xi(H,(r,p))$ to be $\omega_1$-full for
     $H\in [\omega_1]^{n-1}$. We require that
     there is a set
   $\{ \dot \eta_\zeta : \zeta\in\omega_1\}$ of
    canonical $R$-names such that
   $r\Vdash \dot \eta_\zeta \in  \omega_1\setminus
    \zeta$
   and for
  $(\eta,s)\in \dot 
  \eta_\zeta$,  $s\leq r$  and satisfies
  that   $   \Gamma_\xi(H\cup \{  \eta \} , (s,p))$ is 
  $\omega_1$-full.  It is worth noting
  that $(r,p)$ has been changed to 
  $(s,p)$ rather than to some $(s,q)$ with $q<p$.
  This definition generalizes to  $H\in [\omega_1]^{i}$.
  We say that $\Gamma_\xi(H,(r,p))$ is $\omega_1$-full 
      if there is a  set of canonical  $R$-names $\{ \dot \eta_\zeta :
       \zeta\in\omega_1\}$ 
       such that, for each $\zeta\in\omega_1$,
        $r\Vdash \dot \eta_\zeta \in 
         (\omega_1\setminus\zeta) $,
         and for $(\eta , s)\in \dot \eta_\zeta$,
         $s\leq  r $ and
          $\Gamma_\xi(H\cup \{\eta\} , (s,p))$
          is $\omega_1$-full.

\bgroup
\def\proofname{Proof of Claim:\/}

\begin{claim}
Suppose that $\Gamma_\xi(\emptyset, (r,p))$ is $\omega_1$-full
 and   that\label{two} 
   $M\prec H(\kappa^+)$ is countable
   and $\{\xi, \mathcal U,R,   (r,p) \}\in M$.
Then for any $\bar r<r \in R$ and  
finite  
$F\subset \omega_1\setminus M$,
  there are $(s,q), H\in M$
   such that 
   
\begin{enumerate}
 \item $(s,q) <(r,p) \in R\times P_2$,  
 \item $H\cap \bigcup\{ \dot U_\zeta : \zeta \in F\}$ is empty,
 \item $(s,q)\Vdash  \dot H_\xi = H$,
 \item $s\not\perp \bar r$.
\end{enumerate}
\end{claim}

\begin{proof} 
Let $\dot W_F = \bigcup\{ \dot U_\zeta : \zeta \in F\}$.  
 Since $R\in M\prec H(\kappa^+)$  is
 ccc and forces that $ 
 \mathcal U$ is an
  S-space task, it follows that for each 
  $R$-name 
   $\dot A\in  M$ for an uncountable subset
   of $\omega_1$, 
    the set $\dot A\cap M$ is forced to not
    be contained in $\dot W_F$. 
    By induction on $1\leq i\leq n$, we choose
     $(\eta_i, s_i)\in (\omega_1\times R)\cap M$
     and $\bar r_i<s_i$ 
     so that $\bar r_i \Vdash \eta_i\notin \dot W_F$,
      $s_i  \leq  s_j \leq  r$ 
      and
      $\bar r_i \leq \bar r_j$ for $j<i$,
     and $ \Gamma_\xi(\{\eta_j : 1\leq j <  i\} , (s_i,p))$ is
     $\omega_1$-full. 
 
 Let $\bar r_0 =\bar r$, 
$(s_0,q_0) = (r,p)$, 
$\emptyset = \{ \eta_j : 1 \leq j  < 1\}$ and we 
assume by induction
that, at stage $i$, $\Gamma(\{ \eta_j :1\leq  j<i\}, (s_{i},p))$
is $\omega_1$-full. Fix any
 sequence $\{ \dot \eta_\zeta : \omega\leq \zeta\in \omega_1\}\in M$
     witnessing that $\Gamma_\xi(
     \{\eta_j : j<i \},(s_{i},p ))$ is $\omega_1$-full.
      We have that 
 $\{ \dot \eta_\zeta : \omega\leq \zeta\in\omega_1\}\in M$ is
 an $R$-name  for an uncountable subset of $\omega_1$. 
  It follows that  $\bar r_{i-1}$ forces that there is a
  $\zeta\in M$ such that $\dot \eta_\zeta \notin \dot W_F$.
We find an extension $\bar r_{i+1}$ of $\bar r_{i}$
so that we may choose $\zeta\in M$
   and  
   $(\eta, s)\in \dot \eta_\zeta$ such that $\eta\notin \dot W_F$,
    $\bar r_{i+1} < s\leq s_i$. 
   Therefore we set $(\xi_i, s_{i+1}, q_{i+1} ) = 
    (\eta, s, q)$ and this completes the construction.
    
   Setting $H=\{\xi_i :1\leq  i \leq  n\}$ and $(s,q) =
    (s_n,q_n)$  completes the proof of the Claim.     
\end{proof}
 
   \begin{claim}
 If $\Gamma_\xi( H,(r,p))$ is not\label{three}
 $\omega_1$-full, there is an $s<r$  in $R$  and a 
 $\zeta<\omega_1$ 
 such that  $\Gamma_\xi(H\cup \{\eta\}, (s,p))$
 is not $\omega_1$-full  for all $\zeta < \eta\in  \omega_1$. 
\end{claim}
  
\begin{proof} 
Since $\Gamma_\xi(H, (r,p))$ is not
$\omega_1$-full, there is some
 $\zeta\in\omega_1$ so that 
 the suitable nice name $\dot \eta_\zeta$
 does not exist. It follows immediately
 that $\dot \eta_{\gamma}$ does not
 exist for all $\zeta < \gamma\in\omega_1$. 
 In addition,  since $\dot \eta_\zeta$ fails
 to exist, it is because 
  $\Gamma_\xi(H\cup \{\eta\}, (s',r))$ is
  not $\omega_1$-full for all $s'\not\perp s$.
 \end{proof}

\begin{claim}
 For every $(r,p)\in R\times P_2$, there\label{four} is a $\delta$ so
 that $\Gamma_\delta(\emptyset, (r,p))$ is $\omega_1$-full.
\end{claim}

\begin{proof}
 Let $M_0$ be a countable elementary submodel of $H(\kappa^+)$
 so that $\{ \mathcal U, (r,p), R\}\in M_0$. 
 Choose any $p_1 <_E p$ (i.e.
  $p_1(0)=p(0)$ and
   $p_1(0)\Vdash p_1(1)<p(1)$)
    that is $(M_0,P_2)$-generic. Notice
 that $(r,p_1)$ is therefore $(M, R\times P_2)$-generic since
  $R$ is ccc.
 Let $\delta_0 = M_0\cap \omega_1$. 
 Choose any continuous $\in$-chain 
  $\{ M_\alpha : 0<\alpha < \omega_1\}$ of countable
  elementary submodels of $H(\kappa^+)$ such 
  that $p_1\in M_1$. 
  For each $\alpha\in\omega_1$, let $\delta_\alpha = 
   M_\alpha\cap \omega_1$.  We did not actually
   have to choose $p_1$ before choosing $M_1$ of
   course. Let $C $ be the cub
    $\{ \delta_\alpha :\alpha\in\omega_1\}$
     and let $p_2 \in P_2 $ be a common
     extension of $p_1$ and $
     (\emptyset, (\emptyset, \delta_0\cup 
     (C\setminus\delta_0)))$
     (or equivalently $ p_2(0)\leq p_1(0)$
     and $p_2(0)\Vdash p_2(1) \leq
     (\pi_0(p_1(1)), \pi_1(p_1(1))\cap C)$).
     It follows that $p_2\Vdash \dot C_1\setminus \delta_0\subset C$.

 Assume $\Gamma_{\delta_0}(\emptyset, (r,p))$
 is not $\omega_1$-full. Choose 
  $s_0<r$ and $\zeta_0\in\omega_1$ as in
  Claim \ref{two}.   
 By elementarity we may assume that $s_0,\zeta_0$
 are in $  M_1$.

  Now choose any $\bar s_0<s_0 $
  so  that there is a $q_0<p_1$ and
  an $H\in [\omega_1\setminus\delta_0]^n$ 
 such that $(\bar s_0,q_0)\Vdash \dot H_{\delta_0}= H$.
   Of course this implies that 
$ \Gamma_{\delta_0}(H,(r,p))$ is not empty
and therefore, it is $\omega_1$-full.
  Let $H$ be enumerated in increasing order $\{
  \eta_i :1\leq i\leq n\}$.
 
Since $(\bar s_0,q)\Vdash \dot H_{\delta_0}\in 
 Q(\mathcal U, \dot C_1)$, we can assume that
  $q$ has already determined the members of $\dot C_1$
  that separate the elements of $\{\delta_0\}
  \cup H$.  In other words,
     there is
   a set $\{ \alpha_i : 1\leq i \leq n\}\subset \omega_1$ so
   that  $\{ \delta_{\alpha_i} : 1\leq i 
   \leq n \}\subset 
    \pi_0(q(1))\subset C$ 
    such that, for each $1\leq i<n$,
      $\delta_0\leq \delta_{\alpha_{i-1}} \leq \eta_i  $.
 Therefore, $\{ \eta_j : 1\leq j<i\} \in M_{\alpha_i}$ for all $i<n$
 and $\Gamma_{\delta_0}(\{ \eta_j : 1\leq j\leq n\},(r,p)) $ is
  $\omega_1$-full. Clearly, for all $s'<\bar s_0$,
    $\Gamma_{\delta_0}(\{\eta_j : 1\leq j\leq n\}, (s',p))$ 
    is also $\omega_1$-full.
  
  By the choice of $s_0$ and $\zeta_0$,
   we have that $\Gamma_{\delta_0}(\{\eta_1\},
      (s_0, p))\in M_{\alpha_2}$ is not $\omega_1$-full.
We note that 
      $\bar s_0$ is $(M_{\alpha_2},R)$-generic
      condition.
      There is therefore, by Claim \ref{two},
       a $\zeta_1\in M_{  \alpha_2}$ and a pair
       $\bar s_1 < s_1$ so that $s_1\in M_{\alpha_2}$,  
        $\bar s_1<\bar s_0$ and $\Gamma_{\delta_0} 
        (\{\eta_1,\eta\} , (s_1, p))$ is not $\omega_1$-full
        for all $\eta > \zeta_1$. Following this procedure
        we can recursively choose
        a pair of
        descending sequences $\{ s_i : 1\leq i\leq n\}\subset R$
        and $\{ \bar s_i : 1\leq i\leq n\}\subset R$ so that
        
\begin{enumerate}
 \item $s_{i-1}  \in M_{\alpha_i}$ and  $\bar s_i < s_i$, 
 \item $\Gamma_{\delta_0}(\{\eta_1,\ldots, \eta_i\}, (s_i,p))$ is not
  $\omega_1$-full.
\end{enumerate}
  We now have a   contradiction 
  that completes the proof. We noted above that
  since $\bar s_n < \bar s_0$, 
   $\Gamma_{\delta_0}(
   \{\eta_1,\ldots, \eta_i\}, (\bar s_n ,p))$ is
  $\omega_1$-full.
  However since
    $\bar s_n<s_n$, this contradicts that 
     $\Gamma_{\delta_0}(
   \{\eta_1,\ldots, \eta_n\}, (s_n,p))$ is not $\omega_1$-full. 
\end{proof}
\egroup

Now we complete the proof of the Proposition. 
Consider any countable elementary submodel $M$
as in Claim \ref{two}  and
let $\delta=M\cap \omega_1$.
Let $p_1$ be a condition as in Lemma \ref{Cohengeneric}
applied to the case $\alpha=0$. 
 Let $  G_R$ be any $R$-generic
filter and 
let $G_1\subset\mathcal C_{\omega_1}$
 be any
 generic filter,   which
 is generic over the model $V[G_R]$.
 Pass to the extension $V[G_R]$.

  Fix any $F\in [\omega_1\setminus\delta]^n$.
   It follows from Claim \ref{two}   and Claim \ref{three}, 
   that the set 
$  \mathcal W_F $ of those $
     (t,(\dot b, \dot B)) \in  
     M\cap (\mathcal C_{\omega_1} \star \dot{\mathscr J})  
      $
 for which
 \begin{multline*}
 (\exists \xi\in \delta) (\exists s\in G_R) \ \ 
   ( s\Vdash  H\cap  \dot W_F  =\emptyset\  \&\ 
         (s,(t,(\dot b,\dot B) ))\Vdash H = \dot H_\xi )
       \end{multline*}
       is a dense subset of 
       $M\cap (\mathcal C_{\omega_1}\star \dot {\mathscr J})$.
        The proof is that Claim \ref{three}  provides 
  a  potential $\xi \in M$ to strive for, and Claim \ref{two} provides
  an $(s,q)$ to yield an element of $ {\mathcal W}_F$.
  
  It then follows easily that, in the extension
   $V[G_R\times G_1]$, the  
   set 
   $$
   \val_{  G_1\restriction\delta}(\mathcal W_F)
   = \{ \val_{G_1}(\,(\dot  b, \dot B)\,) : (\exists t\in G_1)\ 
   ( (t, (\dot b, \dot B)))\in \mathcal W_F\}$$
   is a dense subset of $\val_{G_1}(M\cap \dot {\mathscr J})$
   which is an element of $V[G_R\times (G_1\restriction \delta)]$. 
   Since $p_1$ forces that the generic
   filter meets $\val_{G_1\restriction\delta}(\mathcal W_F)$,
    this completes the proof.
\end{proof}

 For any $\alpha\leq \kappa$ and subset $I\subset\alpha$,
  we will say that a $P_\alpha$-name $\dot E$
  is a $P_\alpha(I)$-name
  if it is a $P_\alpha(I)$-name in the usual recursive sense.
  This definition makes technical sense even if
   $P_\alpha(I)$ is not a complete subposet of $P_\alpha$. 
  
\begin{corollary}
 Let $\lambda\in \mathbf{E}$ and let 
 $\dot R_0$ be a $P_{\lambda}(I_\lambda) $-name 
 that is forced by $P_\lambda$ to be
  ccc poset. Let $\dot R$ be a $P_\lambda$-name of a
   ccc poset such $\mathbf{1}_{P_\lambda}$ forces 
    that $\dot R_0\subset_c \dot R$.
 Assume that $\mathcal U = \{\dot U_\xi : \xi\in\omega_1\}$
 is a sequence of $P_\lambda(I_\lambda) \star \dot R_0$-names 
 of subsets
 of $\omega_1$ such\label{step2}
 that $P_{\lambda}\star \dot R$ forces 
 that $\mathcal U$ is an S-space task.  
 Then the $P_{\lambda+2}$-name $Q(\mathcal U,\dot C_\lambda)$
 satisfies that $P_{\lambda+2}$ forces that $  \dot R\times 
 Q(\mathcal U,\dot C_\lambda)$ is ccc.
\end{corollary}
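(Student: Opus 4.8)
The plan is to re-run the proof of Proposition~\ref{step1} one coordinate higher, over a $P_\lambda$-extension, with $\dot R$ in the role of the ground-model ccc poset $R$ there and $\dot Q_\lambda\star\dot Q_{\lambda+1}$ in the role of $P_2$ there. Two elementary reductions come first. Since $\dot R$ and the two-step iterand $\dot Q_\lambda\star\dot Q_{\lambda+1}$ are both $P_\lambda$-names, products commute over $P_\lambda$, so $P_\lambda\star\dot R\star\dot Q_\lambda\star\dot Q_{\lambda+1}\simeq P_\lambda\star\dot Q_\lambda\star\dot Q_{\lambda+1}\star\dot R\simeq P_{\lambda+2}\star\dot R$; and, densely, $\dot R\times Q(\mathcal U,\dot C_\lambda)$ is $\dot R\star Q(\mathcal U,\dot C_\lambda)$. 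Hence ``$P_{\lambda+2}$ forces $\dot R\times Q(\mathcal U,\dot C_\lambda)$ is ccc'' splits into ``$P_{\lambda+2}$ forces $\dot R$ is ccc'' --- which follows from Lemma~\ref{cccpreserve}, whose proof shows $P_\alpha$ preserves ccc of $\dot R$ for every $\alpha\in\mathbf{E}$ with $\alpha\ge\lambda$, and $\lambda+2\in\mathbf{E}$ --- together with ``$(P_\lambda\star\dot R)\star(\dot Q_\lambda\star\dot Q_{\lambda+1})$ forces $Q(\mathcal U,\dot C_\lambda)$ is ccc.'' Fixing a $P_\lambda$-generic $G_\lambda$ and a $\val_{G_\lambda}(\dot R)$-generic $G_R$ over $V[G_\lambda]$, this last statement is exactly the conclusion of Proposition~\ref{step1}, relativized to the model $V[G_\lambda]$: the ground ccc poset becomes $R:=\val_{G_\lambda}(\dot R)$; the family $\mathcal U$ --- a family of $\dot R_0$-names with $\dot R_0\subset_c\dot R$, hence of $\dot R$-names, forced by $R$ to be an S-space task --- keeps its role; and $P_2$ becomes $\dot Q_\lambda\star\dot Q_{\lambda+1}$, which over $V[G_\lambda]$ is $\mathcal C_{\omega_1}$ followed by the ``Cohen-controlled'' Jensen poset whose conditions $(\dot a,\dot A)$ have their $\dot a$-coordinate named from the Cohen iteration $P_{\lambda+1}(I_\lambda\cap\mathbf{E})$ and their $\dot A$-coordinate a genuine cub of $V[G_\lambda]$.

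With this dictionary the whole apparatus of Proposition~\ref{step1} transcribes with no change in the combinatorics: for each $n$ and each sequence $\{\dot H_\xi:\xi\in\omega_1\}$ of $R\times(\dot Q_\lambda\star\dot Q_{\lambda+1})$-names for pairwise disjoint members of $[\omega_1]^n\cap Q(\mathcal U,\dot C_\lambda)$ one defines $\omega_1$-fullness of $\Gamma_\xi(H,(r,p))$, with $r$ over $R$ and $p$ over $\dot Q_\lambda\star\dot Q_{\lambda+1}$, exactly as there, and Claims~\ref{two}, \ref{three}, \ref{four} go through verbatim, using only that $R$ is ccc and forces $\mathcal U$ to be an S-space task. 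The single point at which the hypotheses on $\dot R_0$ are genuinely used is the concluding step, where the Cohen real trick is applied: one forms, for $F\in[\omega_1\setminus\delta]^n$, the dense set $\mathcal W_F\subset M\cap(\mathcal C_{\omega_1}\star\dot Q_{\lambda+1})$ built from $F$, the names $\dot H_\xi$ and $\mathcal U$, and --- because $\mathcal U$ is a family of $P_\lambda(I_\lambda)\star\dot R_0$-names --- $\mathcal W_F$ lies in $V[G_\lambda][G_R]$, so $\val_{G_1\restriction\delta}(\mathcal W_F)\in V[G_\lambda][G_R][G_1\restriction\delta]$. That is precisely the model over which the ``Moreover'' clause of Lemma~\ref{Cohengeneric}, applied with $\alpha=\lambda$ and $\dot Q=\dot R$, produces a $(P_{\lambda+2},M)$-generic condition $p_1$ with $p_1^\uparrow{}_\lambda$ being $(\dot Q_{\lambda+1}\cap M)$-generic; so $p_1$ forces the $\dot Q_{\lambda+1}$-generic filter to meet $\val_{G_1\restriction\delta}(\mathcal W_F)$, and unwinding the resulting member of $\mathcal W_F$ yields, exactly as in Proposition~\ref{step1}, a $Q(\mathcal U,\dot C_\lambda)$-compatible pair among the $\dot H_\xi$ witnessed on the $\dot R$-coordinate by conditions in $G_R$. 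The $\Delta$-system reduction of Proposition~\ref{step1} (using that each $U_\zeta$ is countable to bound the obstruction over a countable subfamily) then upgrades this to ccc of $Q(\mathcal U,\dot C_\lambda)$, which by the reductions of the first paragraph is the desired conclusion.

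I expect the only real difficulty to be bookkeeping: keeping straight which objects live in $V[G_\lambda]$, which in $V[G_\lambda][G_R]$ (namely $\mathcal U$ and the sets $\mathcal W_F$), and which depend on the stage-$\lambda$ Cohen generic $G_1$ only through $G_1\restriction\delta$ (the values of the $M$-names of $\dot Q_{\lambda+1}$-conditions, whose $\dot a$-parts are Cohen-named and whose $\dot A$-parts already lie in $V[G_\lambda]$), so that Lemma~\ref{Cohengeneric} gets invoked with the correct intermediate model. Beyond that there is no new idea: the sole ingredient not already present in Proposition~\ref{step1} is the use of the general-$\alpha$ Cohen real trick together with its ``Moreover'' clause, which is exactly what lets the extra ccc factor $\dot R$ --- and the passage from $V$ to $V[G_\lambda]$ --- be absorbed.
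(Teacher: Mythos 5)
Your proposal follows essentially the same route as the paper's proof: pass to $V[G_\lambda]$, rerun Proposition~\ref{step1} with $R=\val_{G_\lambda}(\dot R)$ in the role of the ground ccc poset and $\dot Q_\lambda\star\dot Q_{\lambda+1}$ in place of $P_2$, and close the argument with the ``Moreover'' clause of Lemma~\ref{Cohengeneric} applied with $\alpha=\lambda$ and $\dot Q=\dot R$ (the paper leaves the $\dot R$-factor absorption implicit rather than splitting off ``$P_{\lambda+2}$ forces $\dot R$ is ccc'' via Lemma~\ref{cccpreserve}, but that decomposition is sound). The one point you gloss over is the paper's third checkpoint, located in Claim~\ref{two} rather than at the $\mathcal W_F$ step: since the $\dot U_\zeta$ are $P_\lambda(I_\lambda)\star\dot R_0$-names, the condition $q$ produced there is a priori only a $P_\lambda(I_\lambda)$-name for an element of $\dot Q_\lambda\star\dot Q_{\lambda+1}$, and one must observe that the support restrictions and richness clause of Definition~\ref{conditions}(6) make such names literal elements of $\dot Q_{\lambda+1}$ --- this, not the membership of $\val_{G_1\restriction\delta}(\mathcal W_F)$ in $V[G_\lambda][G_R][G_1\restriction\delta]$, is where the hypothesis that $\mathcal U$ consists of $P_\lambda(I_\lambda)\star\dot R_0$-names is actually consumed.
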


\begin{proof}
Let $G_\lambda$ be a $P_\lambda$-generic filter
and pass to the extension $V[G_\lambda]$. 
Let $R = \val_{G_\lambda}(\dot R)$
and observe that we may now
regard $\mathcal U$ as a family of $R$-names
of subsets of $\omega_1$ that is forced to be an
S-space task.  We would like to simply apply
Lemma \ref{step1} but 
unfortunately,
 $P_{\lambda+2}$ is not isomorphic to
  $P_\lambda\star P_2$. 
Naturally the difference is that
 $\dot Q_{\lambda+1}$ is a proper subset
 of $\dot {\mathscr J}$. It will suffice to identify
 the three key places in the proof of Lemma \ref{step1}
 that depended on consequences of
 the properties of $\mathscr J$
 and to verify that the consequences also
 hold for $\dot Q_{\lambda+1}$. The first
 was in the proof of Claim \ref{four}  where we selected
 a condition $p_2(1)\in \mathscr J$ that satisfied
 that $\pi_1(p_2(1))$ was forced to be a subset
 of  $C\cup \delta_0$ for the cub $C$. Since, in this proof,
 $C$ will be an cub set in the model   $V[G_\lambda]$, 
 it follows from condition (6) of Definition \ref{conditions},
 this can be done. The next property of $P_2$
 that we used was that Lemma \ref{Cohengeneric} holds, 
 but of course this also holds for $P_{\lambda+2}$. 
 The third is in the proof and statement of Claim \ref{two}. 
 When choosing the pair $(s,q)$ in $R\times P_2$
we require that it satisfies condition (2) in Claim \ref{two}. 
In the current situation, each $\dot U_\zeta$ is not
simply an $R$-name but rather it is a
 $P_\lambda(I_\lambda)\star \dot R_0$-name. 
Therefore, there is a $P_\lambda(I_\lambda)$-name
 for a suitable $q$ so that $(s,q)\Vdash H\cap \bigcup
 \{\dot U_\zeta : \zeta\in F\}$ is empty. This causes
 no difficulty since $P_\lambda(I_\lambda)$-names
 for elements of $\dot Q_{\lambda+1}$ are, in fact,
 elements of $\dot Q_{\lambda+1}$.
  That is, a choice for $(s,q)$ in $R\times 
   (\dot Q_{\lambda}\star \dot Q_{\lambda+1})$ can
   be made in $V[G_\lambda]$ as required in Claim \ref{two}.
\end{proof}

    \section{Building the final model}
    
In this section we present the construction of
the iteration sequence of length $\kappa+\kappa$
extending that of Definition \ref{conditions} that will
be used to prove the main theorem.

We introduce more terminology.

\begin{definition}
  Fix\label{defineQ} any $\mu\leq \lambda\leq \kappa$
 and define   $\mathcal Q(\lambda,\mu)$ to be the set of
  all iterations 
   $\mathbf{q} $
   of the form $
   \langle P^{\q}_\alpha , \dot Q^{\q }_\beta : 
   \alpha \leq \lambda+\mu, 
    \beta < \lambda+\mu\rangle \in H(\kappa^+)$
    satisfying that

\begin{enumerate}
 \item      $\langle P^{\q}_\alpha , \dot Q^{\q}_\beta : \alpha\leq\lambda, 
       \beta <\lambda\rangle$ is our sequence
        $\langle P_\alpha, \dot Q_\beta : \alpha \leq \lambda,
         \ \beta <\lambda\rangle$ 
       from Section 2,
       \item   for
       all $\lambda\leq \beta < \lambda + \mu$, $\dot Q^{\q}_\beta
       \in H(\kappa)$
       is a $P^{\q}_{\beta}$-name of a ccc poset,
        \item for all $\alpha \leq \mu$ and $p\in P^{\q}_{\alpha}$,
  $p\restriction \lambda\in P^{\q}_\lambda$ and
   $\dom(p)\setminus \lambda$
  is finite,
  \item if $\lambda<\kappa$, then $\q\in H(\kappa)$.
\end{enumerate}
   For $\q\in \mathcal Q(\lambda,\mu)$, 
  let $\q(\kappa)$ denote the element of
  $ \mathcal Q(\kappa,\mu)$ where
   $\dot Q^{\q(\kappa) }_{\kappa+\beta} =
    \dot Q^{\q}_{\lambda+\beta}$ for all $\beta < \mu$.
 \end{definition}

\begin{lemma}
 Let  $\mu<\kappa$ and let
 $\q   \in \mathcal Q(\kappa,\mu)$
    and let  
   $\mathcal U = \{ \dot U_\xi : \xi\in\omega_1\}$
 be a sequence of $P^\q_{\kappa+\mu} $-names. 
 Assume that $P^\q_{\kappa+\mu}$ forces \label{killS}
 that $\mathcal U$ is an S-space task. 
 Let $\bar M$ be an elementary submodel
 of $H(\kappa^+)$  of cardinality $\aleph_1$
 that is closed under $\omega$-sequences and
 contains $\{ \mathcal U,\q\}$. 
 Choose any $ \lambda\in \mathbf{E}\cap\kappa$
 so that $\bar M\cap \kappa\subset I_\lambda$. 
 Then $P^\q_{\kappa+\mu}$ forces
 that $Q(\mathcal U, \dot C_{\lambda})$ is ccc. 
 \end{lemma}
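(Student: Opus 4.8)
The plan is to reduce Lemma~\ref{killS} to Corollary~\ref{step2} by factoring the iteration $P^\q_{\kappa+\mu}$ through the stage $\lambda$ and observing that everything relevant has been ``captured'' by the elementary submodel $\bar M$ (hence by $I_\lambda$). The key point is that, by property (4) of the family $\mathscr I$ together with the hypothesis $\bar M\cap\kappa\subset I_\lambda$, the set $I_\lambda$ is $\mathscr I$-saturated and contains all the indices that matter for the names appearing in $\q$ and in $\mathcal U$ — at least, it does after we pass to a tail-equivalent reformulation. So the strategy is: (i) use $\bar M$ to replace $\q$ and $\mathcal U$ by ``copies'' living below $I_\lambda$; (ii) recognize the resulting poset as an instance of the $P_\lambda\star\dot R$ setup of Corollary~\ref{step2}, with $\dot R_0$ the part of the iteration that lives inside $P_\lambda(I_\lambda)$; (iii) conclude that $Q(\mathcal U,\dot C_\lambda)$ is forced to be ccc.

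First I would pass to the extension $V[G_\lambda]$ by a $P_\lambda$-generic filter (where $P_\lambda = P^\q_\lambda$ is, by Definition~\ref{defineQ}(1), our original poset from Section~2). Inside $V[G_\lambda]$, the tail $P^\q_{\kappa+\mu}/G_\lambda$ is an iteration of length $\kappa+\mu - \lambda$; but crucially, by Definition~\ref{defineQ}(3), every condition in $P^\q_{\kappa+\mu}$ has finite support off of $\lambda$, and the iterands $\dot Q^\q_{\kappa+\beta}$ for $\beta<\mu$ lie in $H(\kappa)$. The role of $\bar M$ is to let me split this tail: let $\dot R_0$ be (a name for) the suborder of the tail determined by the finitely many coordinates $\kappa+\beta$ whose iterands' names have support inside $I_\lambda$ — more precisely, since $\bar M$ is closed under $\omega$-sequences, contains $\q$, and $\bar M\cap\kappa\subset I_\lambda$, the names $\dot Q^\q_{\kappa+\beta}$ and $\dot U_\xi$ can all be taken to be $P_\lambda(I_\lambda)\star(\text{tail})$-names in the sense defined just before Corollary~\ref{step2}. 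Then $\dot R$ is the full tail $P^\q_{\kappa+\mu}/(P_\lambda\star\dot{\mathbb{Q}}_\lambda\star\dot{\mathbb{Q}}_{\lambda+1})$, which is ccc because each iterand is ccc and the iteration has finite support off $\lambda$ — here I would invoke Lemma~\ref{cccpreserve} to see that $P_\kappa$ (and hence $P^\q_{\kappa+\mu}$, being a finite-support ccc extension of it) preserves the cccness of these posets — and $\dot R_0\subset_c\dot R$ as required.

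The second main step is checking the hypotheses of Corollary~\ref{step2} verbatim: that $\dot R_0$ is a $P_\lambda(I_\lambda)$-name forced to be ccc, that $\dot R$ is a $P_\lambda$-name forced to be ccc with $\dot R_0\subset_c\dot R$, and that $\mathcal U$ is a sequence of $P_\lambda(I_\lambda)\star\dot R_0$-names forced by $P_\lambda\star\dot R$ to be an S-space task. The cccness facts come from Lemma~\ref{cccpreserve} as above. The ``$P_\lambda(I_\lambda)\star\dot R_0$-name'' claim for the $\dot U_\xi$ is exactly where $\bar M$ and the choice of $\lambda$ with $\bar M\cap\kappa\subset I_\lambda$ are used: by elementarity every object in $\bar M$ that is a name with small support has its support inside $\bar M\cap\kappa\subset I_\lambda$, and since $\mathcal U\in\bar M$ and $\bar M$ is $\omega$-closed, the countably-many relevant names can be reflected into $I_\lambda$. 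Once the hypotheses are verified, Corollary~\ref{step2} gives directly that $P_{\lambda+2}$ forces $\dot R\times Q(\mathcal U,\dot C_\lambda)$ is ccc, and composing the factorization back up (the remaining tail $P^\q_{\kappa+\mu}/P_{\lambda+2}$ being a finite-support ccc iteration, hence ccc over $V[G_{\lambda+2}]$ by Lemma~\ref{cccpreserve} again) yields that $P^\q_{\kappa+\mu}$ forces $Q(\mathcal U,\dot C_\lambda)$ is ccc.

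**The main obstacle** I expect is bookkeeping the supports: making precise the sense in which $\dot R_0$ and the $\dot U_\xi$ are $P_\lambda(I_\lambda)$-names when $P_\lambda(I_\lambda)$ is not literally a complete suborder of $P_\lambda$, and verifying that the factorization $P^\q_{\kappa+\mu}\simeq (P_\lambda\star\dot{\mathbb{Q}}_\lambda\star\dot{\mathbb{Q}}_{\lambda+1})\star\dot R$ is legitimate with $\dot R_0$ sitting completely inside $\dot R$. The $\mathscr I$-saturation of $I_\lambda$ (guaranteed because $\bar M\cap\kappa$ is, up to the closure properties of $\bar M$, $\mathscr I$-saturated, and property (3) of $\mathscr I$) is what makes $P_\lambda(I_\lambda)$ behave well enough — specifically it ensures that names with support in $I_\lambda$ for elements of $\dot Q_{\lambda+1}$ are genuinely elements of $\dot Q_{\lambda+1}$, which is the remark used at the end of the proof of Corollary~\ref{step2}. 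I would handle this by first replacing $\lambda$, if necessary, by an element of $\mathbf{E}\cap\kappa$ above $\bar M\cap\kappa$ for which $\bar M\cap\kappa\subset I_\lambda$ \emph{and} $I_\lambda$ is $\mathscr I$-saturated — which is exactly what property (4) of $\mathscr I$ (applied to $I=\bar M\cap\kappa\in[\kappa]^{\aleph_1}$) provides — and then all the support computations go through coordinate by coordinate.
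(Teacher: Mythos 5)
Your overall strategy --- factor through stage $\lambda$, extract $\dot R_0$ from $\bar M$, apply Corollary \ref{step2}, and transfer upward with Lemma \ref{cccpreserve} --- is the paper's strategy, but there is a genuine error in how you set up the factorization, and it breaks the argument at two points. You take $\dot R$ to be the full quotient $P^\q_{\kappa+\mu}/P_{\lambda+2}$ and assert it is ccc ``because each iterand is ccc and the iteration has finite support off $\lambda$.'' Neither claim is true: the segment of the iteration on $[\lambda+2,\kappa)$ is the mixed-support iteration with countable supports, and its iterands include the club-adding posets $\dot Q_{\alpha+1}$, which have antichains of size $\aleph_1$ (e.g.\ the conditions $(\{\xi\},\omega_1)$ for $\xi<\omega_1$ are pairwise incompatible). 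So your $\dot R$ is not ccc and is not a $P_\lambda$-name, hence fails the hypotheses of Corollary \ref{step2}; and your closing step, where you dismiss the remaining tail $P^\q_{\kappa+\mu}/P_{\lambda+2}$ as ``a finite-support ccc iteration,'' fails for the same reason. (Even if that quotient were ccc, ccc-ness of a quotient does not by itself preserve ccc-ness of another poset; that preservation is exactly what Lemma \ref{cccpreserve} supplies, and it supplies it for $P_\kappa$ specifically, via the Cohen real trick, not for arbitrary ccc tails.)

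The correct decomposition is this: since $\q\in\mathcal Q(\kappa,\mu)\cap\bar M$ and every $\dot Q^\q_{\kappa+\beta}$ lies in $H(\kappa)$, there is a single $\gamma\in\bar M\cap\kappa$ such that every $\dot Q^\q_{\kappa+\beta}$ is a $P_\gamma$-name; hence the finite-support iteration $\dot R$ of the post-$\kappa$ iterands \emph{alone} is already a $P_\lambda$-name, even though it is applied only after $P_\kappa$, and $P^\q_{\kappa+\mu}\simeq P_\kappa\star\dot R$. The subposet $\dot R_0$ is obtained by restricting both supports to $\bar M\cap\mu$ (not ``finitely many coordinates'') and values to $\bar M\cap\dot Q^\q_{\kappa+\beta}$; that is what makes it live over $P_\lambda(I_\lambda)$ and gives $\dot R_0\subset_c\dot R$. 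Also, only the iterands indexed in $\bar M$ --- not all of them, as you assert --- can be taken to have support in $I_\lambda$. Corollary \ref{step2} then says $P_{\lambda+2}$ forces $\dot R\times Q(\mathcal U,\dot C_\lambda)$ is ccc; Lemma \ref{cccpreserve}, applied to this product as a name at stage $\lambda+2\in\mathbf{E}$, carries this through the non-ccc remainder $P_\kappa/P_{\lambda+2}$; and finally ``$R\times Q$ ccc'' yields ``$R$ forces $Q$ is ccc,'' which is the desired statement for $P_\kappa\star\dot R\simeq P^\q_{\kappa+\mu}$. Your draft names all the right ingredients but misassembles them at precisely the step the mixed-support construction was designed to handle.
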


\begin{proof}
Since $\mu\in \bar M$, it follows that 
 $\mu\leq\lambda$.  Furthermore, 
 by the assumptions on $\q\in \mathcal Q$
 and $\q\in \bar M$, 
  it follows that there is a $\gamma\in \bar M\cap \kappa$
  such that 
  $\dot Q_\beta$ is a $P_\gamma$-name
  for all $\kappa\leq \beta < \kappa+\mu$.  
  In addition, for each $\beta\in \bar M\cap \mu$,
   $\dot Q_\beta$ is a $P_\gamma(\bar M\cap \gamma)$-name.
  Since $\gamma<\lambda$, there is a
  $P_\lambda$-name,
  $\dot R $, of a  
  finite support iteration of length $\mu$ such
 that   $P_\kappa \star \dot R$ is isomorphic
   to $P^\q_{\kappa+\mu}$.
   More precisely, the  $\beta$-th iterand for
    $\dot R$ is the name $\dot Q_{\kappa+\beta}$. 
   Similarly, let $\dot R_0$ be the set of 
   conditions in $\dot R$ with support contained
   in $\bar M\cap \mu$ and values taken in $\bar M
   \cap \dot Q_{\kappa+\beta}$ for each $\beta$
   in the support. 
Then  we have that 
$\mathbf{1}_{P_\lambda} \Vdash 
    \dot R_0 \subset_c \dot R$.
By minor re-naming, we may treat
    $\mathcal U$ as a sequence of 
    $P_\lambda(I_\lambda)\star \dot R_0$-names.
   Since $P^\q_{\kappa+\mu}$ forces
   that $\mathcal U$ is an S-space task,
   it follows that $P_\lambda\star \dot R$ 
   also forces that       
   $\mathcal U$ is an S-space task.
   By Corollary \ref{step2}, $P_{\lambda+2}$
   forces that $\dot R\times Q(\mathcal U,\dot C_\lambda)$
   is ccc.   
 By Lemma \ref{cccpreserve},  $P_\kappa$
 forces that $\dot R\times Q(\mathcal U,\dot C_\lambda)$ is
 ccc. Since $P^\q_{\kappa+\mu} $ is isomorphic
 to $P_\kappa\star \dot R$, this completes the proof.
   \end{proof}

\begin{theorem}
Let $\kappa>\aleph_2$ be a regular\label{mainthm} cardinal in 
 a model of GCH. 
 There is an iteration sequence
  $\langle P_\alpha , \dot Q_\beta : \alpha \leq \kappa+\kappa,
   \ \beta < \kappa+\kappa\rangle$ such that 
    $P_{\kappa+\kappa}$ forces that there are no S-spaces
    and, for all $\mu < \kappa$, 
  $\langle P_\alpha , \dot Q_\beta : \alpha \leq \kappa+\mu,
   \ \beta < \kappa+\mu\rangle$  is in $\mathcal Q(\kappa,\mu)$.
   It therefore follows that $P_{\kappa+\kappa}$ is
   cardinal preserving and forces that
    $ \kappa^{<\kappa} = \kappa = \mathfrak c$.   
    
    The iteration can be chosen so that, in addition,
     Martin's Axiom holds in the extension.  
\end{theorem}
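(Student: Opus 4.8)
The plan is to build the iteration of length $\kappa+\kappa$ by transfinite recursion, using the machinery already established. Start with the preparatory poset $P_\kappa$ from Section 2, which by Lemmas \ref{noreals} and the earlier work is cardinal preserving and (being of size $\kappa$ with the $\aleph_2$-cc, over a GCH ground model) forces $\kappa^{<\kappa}=\kappa=\mathfrak c$. Over this, recursively define $\dot Q_{\kappa+\beta}$ for $\beta<\kappa$ so that at each stage the initial segment lies in $\mathcal Q(\kappa,\beta)$: clauses (2)--(4) of Definition \ref{defineQ} are preserved by taking finite-support-over-$P_\kappa$ iterations of ccc posets of size $<\kappa$, and $P_{\kappa+\kappa}$ is the direct limit. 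A standard bookkeeping function, using $\kappa^{<\kappa}=\kappa$ so that each potential object appears cofinally, enumerates all pairs $(\mathcal U,\text{relevant data})$ where $\mathcal U$ is a $P^{\q}_{\kappa+\beta}$-name forced to be an S-space task; at the corresponding stage we set $\dot Q_{\kappa+\beta}$ to be $Q(\mathcal U,\dot C_\lambda)$ for an appropriate $\lambda\in\mathbf E\cap\kappa$. To make this legitimate we must first pass to an elementary submodel $\bar M\prec H(\kappa^+)$ of size $\aleph_1$, closed under $\omega$-sequences, containing $\{\mathcal U,\q\}$, and then choose $\lambda$ with $\bar M\cap\kappa\subseteq I_\lambda$; property (4) of the family $\mathscr I$ guarantees such $\lambda$ exists (indeed cofinally many). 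By Lemma \ref{killS}, $P^{\q}_{\kappa+\beta}$ — and hence also the final $P_{\kappa+\kappa}$, since the tail iteration is ccc by Lemma \ref{cccpreserve} and finite-support bookkeeping keeps everything ccc — forces $Q(\mathcal U,\dot C_\lambda)$ to be ccc, so appending it is permissible and it adds an uncountable discrete (equivalently, free-sequence-free-violating) subset to the space coded by $\mathcal U$.

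Next I would verify that $P_{\kappa+\kappa}$ forces there are no S-spaces. By the remarks in Section 3, any S-space may be taken to be a regular locally countable topology on $\omega_1$ with open sets having countable closures, and the associated neighborhood assignment $\mathcal U=\{U_\xi\}$ is an S-space task; moreover an uncountable discrete subset of such a space yields, via an uncountable free sequence, a contradiction with hereditary separability (or directly shows the space is not an S-space). Since $P_{\kappa+\kappa}$ has the $\aleph_2$-cc — this needs to be checked, but it follows because $P_\kappa$ has it and the tail is a finite-support ccc iteration of length $\kappa$, a cofinality-$\kappa$ direct limit, so a standard $\Delta$-system argument gives the $\aleph_2$-cc — any such $\mathcal U$ in the final model is (a name in) some intermediate $P^{\q}_{\kappa+\beta}$ with $\beta<\kappa$, hence was handled at some later stage by the bookkeeping, so the generic for $Q(\mathcal U,\dot C_\lambda)$ produces the required discrete set. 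One must also confirm $\dot C_\lambda$ is a genuine cub in the final extension, which follows from Lemma \ref{itiscub} together with the fact that the tail iteration adds no new countable sequences of ordinals below $\omega_1$ relevant to cub-ness — more simply, $\dot C_\lambda$ is a $P_{\lambda+2}$-name and remains a cub in any ccc extension.

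For the final sentence — arranging that Martin's Axiom also holds — I would interleave into the same bookkeeping the standard MA tasks: at a club of stages $\kappa+\beta$, instead of (or in addition to) S-space tasks, let $\dot Q_{\kappa+\beta}$ be a $P^{\q}_{\kappa+\beta}$-name for a ccc poset of size $<\kappa$ appearing in the bookkeeping enumeration, together with $<\kappa$ many dense sets to be met. Because $\mathfrak c=\kappa$ and $\kappa^{<\kappa}=\kappa$ are maintained throughout (each iterand has size $<\kappa$, the iteration has length $\kappa+\kappa$, and the $\aleph_2$-cc bounds the number of nice names), every ccc poset of size $<\kappa=\mathfrak c$ and every family of $<\kappa$ dense sets in the final model already appears at some stage $<\kappa+\kappa$, so the corresponding filter is added. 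The only subtlety is that adding MA iterands must not destroy the ccc-ness of the S-space-killing posets $Q(\mathcal U,\dot C_\lambda)$ already in the iteration and vice versa: this is exactly the content of the ``$\dot R\times Q(\mathcal U,\dot C_\lambda)$ is ccc'' strengthening in Corollary \ref{step2} and Lemma \ref{killS}, where $\dot R$ absorbs the entire tail iteration including future MA posets. Running the bookkeeping over pairs rather than single objects, and invoking Lemma \ref{killS} with $\dot R$ taken to be the full remaining finite-support iteration, handles this uniformly.

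The main obstacle is the interaction just described: one needs every ccc poset that will ever be appended to remain ccc in the presence of all the others, which is why the products with $\dot R$ in Corollary \ref{step2} and Lemma \ref{killS} are essential, and why $\dot R_0$ must be a $P_\lambda(I_\lambda)$-name — the $\lambda$ chosen for one S-space task must see (via $I_\lambda\supseteq\bar M\cap\kappa$) all the relevant earlier data, including prior MA iterands that the task's names might depend on. Getting the bookkeeping to respect the constraint that $\lambda$ be large enough for each task while still having cofinally many usable $\lambda$'s in $\mathbf E\cap\kappa$, and checking that the names $\dot Q^{\q}_{\kappa+\beta}$ genuinely land in $H(\kappa)$ at each stage so that $\q(\kappa)\in\mathcal Q(\kappa,\beta)$ remains well-defined, is where the care is needed; everything else is an assembly of Lemmas \ref{cccpreserve}, \ref{killS}, and the standard MA iteration template.
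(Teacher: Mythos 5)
Your proposal follows essentially the same route as the paper: the preparatory $P_\kappa$, a finite-support ccc tail of length $\kappa$ whose iterands are the posets $Q(\mathcal U,\dot C_\lambda)$ with $\lambda$ chosen via property (4) of $\mathscr I$ so that $I_\lambda$ covers an $\omega$-closed elementary submodel $\bar M$ of size $\aleph_1$ containing the task, ccc-ness supplied by Lemma \ref{killS}, reflection of tasks to initial segments via the chain condition, and Martin's Axiom interleaved by additional bookkeeping. The one point you flag but leave unresolved --- reconciling the bookkeeping with the constraint that a given task can only be handled at stages $\lambda$ with $\bar M\cap\kappa\subset I_\lambda$ --- is settled in the paper by handling at each stage the $\sqsubset$-least not-yet-handled task that \emph{can} be handled there, and then observing that for any task the set $\Lambda$ of usable stages is unbounded while the chosen tasks form a one-to-one sequence, so the task must be selected by the time the order type of the usable stages below exceeds its $\sqsubset$-rank.
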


\begin{proof}
Fix a sequence $\mathscr I = \{ 
 I_\gamma : \gamma \in \kappa\}$ as described in 
 the construction of the
  sequence $\langle P_\alpha , \dot Q_\beta : 
   \alpha\leq \kappa, \  \beta < \kappa\rangle$. 
  Also let $\mathcal
   Q(\lambda,\mu)$ for $\mu\leq \lambda <\kappa$
  be defined as in 
  Definition \ref{defineQ}.

 We introduce still more notation.    
    For all $\alpha \leq \lambda < \kappa$, 
 let $P^\lambda_\alpha$
    simply denote $P_\alpha$
    and $\dot Q^\lambda_\alpha = \dot Q_\alpha$. 
Also   for any 
  $\mu \leq  \lambda<\kappa$ and sequence 
  $\q' = 
  \langle \dot Q_\beta' : \beta < \mu \rangle\in H(\kappa)$,
 let   $\dot Q_{\lambda+\beta}^\lambda (\q')$ denote $ \dot Q_\beta'$. 
 By recursion on $\alpha <\mu$, let $P^\lambda_{\lambda+\alpha}(\q')$
 denote the limit of the iteration sequence
  $\langle P^\lambda_{\zeta} (\q'), \dot Q^\lambda_{\beta}(\q') : 
     \zeta < \alpha, \ \beta <\alpha\rangle$ so long as
     this sequence
     (and its limit) is in $\mathcal Q(\lambda,\alpha)$. 
  Say that a sequence
   $\q' = \langle \dot Q_\beta' : \beta < \lambda \rangle\in H(\kappa)$
  is suitable  if for all 
  $\alpha \in \mathbf{E}\cap   \lambda{+}1$,
   $\langle P^\lambda_\zeta(\q'), \dot Q^\lambda_{\beta}(\q') :
\zeta  \leq    \alpha  , \ \beta < \alpha\rangle$ is in
 $\mathcal Q(\lambda,\alpha)$. We state for reference 
 two properties of suitable sequences.

\begin{fact}
 If $\lambda$ is a limit ordinal, then 
 $\langle \dot Q_\beta' : \beta\in \lambda\rangle \in H(\kappa)$
 is suitable so long as
  $\langle \dot Q_\beta' : \beta < \mu\rangle$
 is suitable for all $\mu<\lambda$.
\end{fact}

\begin{fact}
 If $\q' = 
 \langle \dot Q_\beta' : \beta \in \lambda\rangle \in H(\kappa)$
 is suitable, then $\langle \dot Q_\beta' : \beta \in \lambda+1\rangle$
 is suitable for any $P^\lambda_{\lambda+\lambda}(\q')$-name
 $\dot Q_\lambda'$ of a ccc poset of cardinality at most $\aleph_1$.
\end{fact}

Now that we have this cumbersome, but necessary, notation
out of the way, the proof of the theorem is a routine
consequence of the prior results. Let $\sqsubset$ be a
well ordering of $H(\kappa)$ in type $\kappa$. 
We recursively define a sequence $\langle
 \dot Q_\beta' : \beta < \kappa\rangle$ and a 
 1-to-1 sequence
  $\langle \mathcal U_\beta : \beta <\kappa\rangle$.
  One
 inductive assumption is that every initial segment
 of  $\langle
 \dot Q_\beta' : \beta < \kappa\rangle$ 
 is a suitable sequence.  The list $\{\mathcal U_\beta :
 \beta < \kappa\}$ will contain the list the potential S-space
 tasks as we deal with them.

 Let
  $\lambda<\kappa$ and assume that 
  $\langle \dot Q_\beta' , \mathcal U_\beta
  : \beta < \lambda\rangle \in
  H(\kappa)$ has been chosen.
  If $\lambda\notin\mathbf{E}$, then 
  $\dot Q_\lambda'$ is the trivial poset
  and $\mathcal U_\lambda=\lambda$.
  Now let $\lambda\in \mathbf{E}$
  and let
   $\q' =
  \langle \dot Q_\beta' : \beta < \lambda\rangle$. 
Consider the set of all  $P^\lambda_{\lambda+\lambda}(\q')$-names
$\mathcal U=\{ \dot U_\xi : \xi\in\omega_1\}$
that are forced to be S-space tasks. Consider
only those $\mathcal U$ for which
  there is an elementary
submodel $\bar M$ of $H(\kappa^+)$ as in Lemma 
 \ref{killS}. More specifically,
 such that   $\bar M\cap \lambda\subset I_\lambda$,
 $\{\mathcal U, P^\lambda_{\lambda+\lambda}
 (\q')\} \in \bar M$, $|\bar M|=\aleph_1$, and
  $\bar M^\omega \subset \bar M$. 
  The final requirement of such $\mathcal U$
  is that they are 
   not in the set $\langle \mathcal U_\beta :\beta <\lambda\rangle$.
  If any such $\mathcal U$ exist, then let
   $\mathcal U_\lambda$ be the $\sqsubset $-minimal one. 
   Loosely, $\mathcal U_\lambda$ is the 
    $\sqsubset$-minimal S-space task that has not yet
    been handled and can be handled at this stage.
    Otherwise, let $\mathcal U_\lambda=\lambda$ 
    (so as to preserve the 1-to-1 property).
    Now we choose $\dot Q_\lambda'$. 
    If $\mathcal U_\lambda =\lambda$, then
     $\dot Q_\lambda$ is the trivial poset. 
     Otherwise, of course, $\dot Q_\lambda$ is
     the $P^{\lambda+2}_{\lambda+\lambda}(\q')$-name
     for
      $Q(\mathcal U_\lambda, \dot C_\lambda)$. 
      By Lemma \ref{killS} and Fact 2,
       $\langle \dot Q_\beta : \beta \leq\lambda\rangle$
       is suitable.
       
       This completes the recursive construction of the
        suitable sequence $\q' =
        \langle \dot Q_\beta' : \beta < \kappa\rangle$
        and the listing $\langle \mathcal U_\beta : \beta < \kappa\rangle$.
        It remains only to prove that if
         $\mathcal U = \{ \dot U_\xi : \xi\in\omega_1\}$
         is a $P^\kappa_{\kappa+\kappa}(\q')$-name of
         an S-space task,  then there is an $\alpha < \kappa$
         such that 
          $\mathcal U = \mathcal U_\alpha$. 
          Fix any such $\mathcal U$ and elementary
          submodel $\bar M\prec H(\kappa^+)$ such
          that $\{\mathcal U, 
          P^\kappa_{\kappa+\kappa}(\q')\}\in\bar M$,
          $|\bar M| = \aleph_1$, and $\bar M^\omega\subset 
          \bar M$. Let $\Lambda$ be the set of $\lambda\in \kappa$
          such that $\bar M\cap \kappa \subset I_\lambda$. 
          Let $\gamma$ be the order type of the set
          of predecessors of $\mathcal U$ in the well ordering
         $\sqsubset$. Choose any $\lambda\in \Lambda$ 
         such that the order type of $\Lambda \cap \lambda$
         is greater than $\gamma$. Note that $\Lambda\subset
          \mathbf{E}$. For every $\mu\in \Lambda \cap\lambda$,
           $\mathcal U$ would have been an appropriate
           choice for $\mathcal U_\mu$ and if not chosen,
           then  $\mu\neq \mathcal U_{\mu} \sqsubset 
           \mathcal U$.  Since the sequence is 1-to-1,
            there is therefore a $\mu\in \Lambda\cap \lambda$
           such that $\mathcal U = \mathcal U_\mu$.  
           
           It should be clear that we can ensure
           that Martin's Axiom holds in the extension
           by making 
            minor adjustments to the choice of
             $\dot Q_\beta'$ for $\beta\notin \mathbf{E}$
in the  sequence
            $\langle \dot Q_\beta' : \beta < \kappa\rangle$
            together with 
 some additional bookkeeping, 
\end{proof}

 \section{Moore-Mrowka tasks}
 
 The Moore-Mrowka problem asks if every 
 compact space of countable tightness is sequential.
 A  space has countable tightness if  the closure
 of 
 a set  is equal to the union of   the closures
 of all its countable subsets. 
 A space is sequential providing that each
 subset is closed so long as
  it contains the
 limits of all its converging
 (countable) subsequences. 
To illustrate that a sequential space has
countable tightness, 
note  that 
  a space has countable tightness if a set
  is closed so long as  it contains the closures of
  all of its countable subsets.
 Say that a compact non-sequential space of countable
tightness is a Moore-Mrowka space. 
 
Results on the Moore-Mrowka problem have closely
resembled those of the S-space problem. 
In particular, there are proofs that PFA implies there are no
Moore-Mrowka spaces that have  many similarities to
the proof that PFA implies there are no S-spaces.  
While it is independent with CH as to whether
 Moore-Mrowka spaces exist \cite{chMM}, it is known
 that $\diamondsuit$ implies there are
 (Cohen indestructible) Moore-Mrowka spaces
 of cardinality $\aleph_1$ \cite{Ostaszewski}.
In addition, $\diamondsuit$ implies there is a separable
compact space of countable tightness with cardinality 
$2^{\aleph_1} $ (greater
than $\mathfrak c$) \cite{Fedorchuk}. It is also known that
the addition of $\aleph_2$ Cohen
reals over a model of $\diamondsuit + \aleph_2<2^{\aleph_1}$
results in a model in which there is a compact separable
space of countable tightness that has
  cardinality greater than $\mathfrak c$
  \cite{DowCohen}. 
  Of course these spaces are Moore-Mrowka spaces
  since
  every separable sequential
space has cardinality at most $\mathfrak c$.
 
 Here are two
open problems and a third that we solve in
the affirmative in this section.

\begin{question}
 Is it consistent with $\mathfrak c >\aleph_2$ 
 that every compact space of countable tightness is sequential?
\end{question}

\begin{question}
 Is it consistent with $\mathfrak p > \aleph_2$ 
 that there is a Moore-Mrowka space?
\end{question}

\begin{question}
 Is it consistent with $\mathfrak c > \aleph_2$ 
 that every  separable Moore-Mrowka space has
  cardinality at most $\mathfrak c$?
\end{question}
 
 A Moore-Mrowka task mentioned in the title of
 the section is  similar to an S-space task. The difference
 will be that rather than using the poset
  $Q(\mathcal U, C)$ to force  an uncountable discrete subset,
   we will hope to force an uncountable (algebraic) free sequence.
   We define these notions and indicate their relevance.

\begin{definition}
  A sequence $\{ x_\alpha : \alpha\in\omega_1\}$ is a 
  free sequence\label{freesequence}
   in a space $X$ if, for every $\delta<\omega_1$,
   the initial segment $\{ x_\alpha : \alpha \in \delta\}$ 
   and the final segment $\{ x_\beta : \beta\in \omega_1\setminus \delta\}$
   have disjoint closures. 
   
   A sequence $\{ x_\alpha, U_\alpha, W_\alpha : \alpha\in \omega_1\}$
   is an algebraic free sequence  in a space $X$ providing
   
\begin{enumerate}
 \item $x_\alpha\in U_\alpha$ and $W_\alpha$ are open sets with
  $\overline{U_\alpha}\subset W_\alpha$, 
  \item for every $ \alpha < \delta  \in\omega_1$,
  $x_\delta\notin W_\alpha$ and 
   there is a finite $H\subset   \delta{+}1$ such
   that $\{ x_\eta : \eta \leq \delta\}  \subset
    \bigcup\{ U_\beta : \beta\in H  \}$. 
\end{enumerate}
\end{definition}

Free sequences were introduced by Arhangelskii. 
Algebraic free sequences were introduced by Todorcevic
 in a slightly different formulation. The advantage
 of an algebraic free sequence is that the only reference
 to the (second order) closure property is with the
 pairs $U_\alpha, W_\alpha$. 
 If $\{ x_\alpha, U_\alpha, W_\alpha : \alpha\in\omega_1\}$
 is an algebraic free sequence, then 
 the set $\{ x_{\alpha+1} : \alpha < \omega_1\}$
 is a free sequence. This follows from the fact that
 for all $\delta\in\omega_1$,   there is a finite
  $H\subset \delta+1$ satisfying that
   $\{ x_\alpha : \alpha \leq \delta\} 
   \subset U_H = \bigcup\{ U_\alpha :\alpha\in H\}$
   and $\{ x_\beta : \delta< \beta \in\omega_1\}$ is disjoint
   from $W_H = \bigcup\{ W_\alpha : \alpha \in H\}$.
   The free sequence property now
   follows from the fact that
    $U_H$ and $X\setminus W_H$ have disjoint
   closures. This was crucial in Balogh's proof \cite{Balogh}
   that PFA implies there are no Moore-Mrowka spaces.

\begin{proposition}[\cite{Arh78}]
A compact space has countable tightness if and only
if it contains no uncountable free sequence. 
\end{proposition}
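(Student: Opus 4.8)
The plan is to prove both implications; the ``only if'' direction is short, while the ``if'' direction is the classical Arhangelskii construction of a free sequence.

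\emph{Countable tightness $\Rightarrow$ no uncountable free sequence.} Suppose $X$ is compact of countable tightness and, toward a contradiction, that $\langle x_\alpha:\alpha\in\omega_1\rangle$ is a free sequence. For $\delta\in\omega_1$ put $F_\delta=\overline{\{x_\beta:\beta\ge\delta\}}$. These are nonempty closed sets, decreasing in $\delta$, so by compactness there is a point $p\in\bigcap_{\delta<\omega_1}F_\delta$. Since $p\in\overline{\{x_\alpha:\alpha<\omega_1\}}$ and $X$ has countable tightness, there is a countable $S\subseteq\omega_1$ with $p\in\overline{\{x_\beta:\beta\in S\}}$. Let $\delta=\sup S+1<\omega_1$. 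Then $p\in\overline{\{x_\beta:\beta<\delta\}}$, while also $p\in F_\delta=\overline{\{x_\beta:\beta\ge\delta\}}$, contradicting that $\langle x_\alpha\rangle$ is free.

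\emph{No uncountable free sequence $\Rightarrow$ countable tightness.} I argue the contrapositive: assuming $t(X)>\omega$, I produce an uncountable free sequence. Fix $Y\subseteq X$ and $p\in\overline Y$ with $p\notin\overline C$ for every countable $C\subseteq Y$; since $p\notin\overline{Y\cap E}$ for countable $E$, it follows that $p\in\overline{Y\setminus E}$ for every countable $E\subseteq Y$, so each such $Y\setminus E$ (necessarily uncountable) is again a witness of the same type. By recursion on $\alpha<\omega_1$ I build points $x_\alpha$ together with a $\subseteq$-decreasing chain of nonempty closed sets $F_\alpha\ni p$, each still carrying a witness of uncountable tightness for $p$ inherited from $(Y,p)$, so that $x_\alpha\in F_\alpha$ and $F_\alpha\cap\overline{\{x_\beta:\beta<\alpha\}}=\emptyset$. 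Granting this, for every $\alpha$ one has $\{x_\gamma:\gamma\ge\alpha\}\subseteq F_\alpha$, hence $\overline{\{x_\gamma:\gamma\ge\alpha\}}\subseteq F_\alpha$ is disjoint from $\overline{\{x_\beta:\beta<\alpha\}}$, so $\langle x_\alpha:\alpha<\omega_1\rangle$ is free. At a successor step, $\{x_\beta:\beta\le\alpha\}$ is a countable subset of $Y$, so $p$ is not in its closure; using that a compact Hausdorff space is regular, one shrinks $F_\alpha$ to a closed neighbourhood of $p$ disjoint from $\overline{\{x_\beta:\beta\le\alpha\}}$ and then picks $x_{\alpha+1}$ inside this new set, using that it still meets $Y$ (indeed uncountably) near $p$.

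The delicate point — the step I expect to be the real obstacle — is the limit stage $\delta<\omega_1$. Compactness of $X$ immediately gives that $F_\delta:=\bigcap_{\alpha<\delta}F_\alpha$ is a nonempty closed set contained in every earlier $F_\alpha$, and after one more regular shrinking it is disjoint from $\overline{\{x_\beta:\beta<\delta\}}$; the issue is to see that $F_\delta$ still carries enough of $Y$ to continue. This does not follow naively, since ``$p$ lies in the closure of $Y\cap F_\alpha$ for every $\alpha<\delta$'' does not imply ``$p$ lies in the closure of $Y\cap\bigcap_{\alpha<\delta}F_\alpha$''. The way I would resolve it is to run the recursion so that the witnessing part of $Y$ available at stage $\alpha$ is always obtained from a set of the form $Y\setminus E_\alpha$, with $\langle E_\alpha\rangle$ an increasing chain of countable subsets of $Y$ recording the points already ``used up''; at the limit $E_\delta=\bigcup_{\alpha<\delta}E_\alpha$ is still countable, so $Y\setminus E_\delta$ is still an uncountable witness with $p$ in its closure, and the real content of the argument is to choose the successor-stage neighbourhood shrinkings compatibly with this bookkeeping so that $\overline{\,Y\setminus E_\delta\,}$, or the relevant piece of it, survives inside $\bigcap_{\alpha<\delta}F_\alpha$. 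Everything else is routine transfinite recursion.
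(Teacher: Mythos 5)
The paper itself offers no proof here --- the proposition is quoted from \cite{Arh78} --- so your proposal is measured against the classical argument. Your first direction (countable tightness implies no uncountable free sequence) is correct and complete. The second direction, however, has a genuine gap, and it is exactly at the point you flag: the limit stage. The bookkeeping you propose --- recording an increasing chain of countable ``used up'' sets $E_\alpha\subseteq Y$ and noting that $Y\setminus E_\delta$ is still an uncountable witness at a limit $\delta$ --- does not address the obstruction. The problem is not that countably many points of $Y$ have been consumed; it is that knowing $p\in\overline{Y\cap F_\alpha}$ for every $\alpha<\delta$ (or even $p\in\overline{(Y\setminus E_\delta)\cap F_\alpha}$ for every $\alpha$) gives no information about $Y\cap\bigcap_{\alpha<\delta}F_\alpha$, because $\bigcap_{\alpha<\delta}F_\alpha$ is only a closed $G_\delta$ containing $p$, not a neighbourhood of $p$. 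Deleting a countable subset of $Y$ changes nothing about whether the trace of $Y$ on this $G_\delta$ is nonempty or has $p$ in its closure, so ``the relevant piece survives'' is precisely the assertion that needs proof, and your sketch supplies no mechanism for it.

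The classical resolution is a different compactness argument, not bookkeeping inside $Y$. One arranges that each $H:=\bigcap_{\alpha<\delta}F_\alpha$ is a closed $G_\delta$ containing $p$ and writes it as a decreasing intersection $H=\bigcap_n\overline{W_n}$ of closed neighbourhoods with each $W_n$ open and $H\subseteq W_n$. Since $W_n$ is \emph{open} and contains $p\in\overline{Y}$, each $\overline{Y\cap W_n}$ is a nonempty closed set, and compactness gives a point of $\bigcap_n\overline{Y\cap W_n}\subseteq H$. Note that this point lies in $\overline{Y}$ rather than in $Y$, so the hypothesis ``$p\notin\overline{B}$ for countable $B\subseteq Y$'' no longer applies directly to the chosen points, and the recursion has to be reorganized around sets of the form $\overline{Y\cap W}$ (this is how Arhangelskii's argument actually runs). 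So the missing content is twofold: the compactness lemma producing points of $\overline{Y}$ inside the limit-stage $G_\delta$, and the reformulation needed because those points need not belong to $Y$. As written, your recursion cannot be completed at the first limit ordinal.
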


\begin{definition}
 A sequence $\mathcal A = \{ A_\alpha : \alpha \in \omega_1\}$
 is a Moore-Mrowka\label{MMT}
  task if, for all $\alpha\in\omega_1$,
 $\alpha\in A_\alpha \subset \alpha+1$, and
 
\begin{enumerate}
 \item for all $\beta < \alpha $ there is a $\gamma$ such
 that  $A_\gamma \cap \{\beta,\alpha\} = \{\alpha\}$,  and
\item   for all uncountable $A\subset\omega_1$, there is a
 $\delta\in\omega_1$ such that for all $\beta\in \omega_1\setminus
 \delta$, $(A\cap \delta) \cap \bigcap_{\gamma\in H}
  A_\gamma $ is not empty
 for all finite $H\subset \{ \gamma : \beta\in A_\gamma\}$.
\end{enumerate}
\end{definition}

The idea behind a Moore-Mrowka task is that we
identify $\omega_1$ with a set of points in space $X$
and so that there is
a  collection $\{ U_\alpha , W_\alpha : \alpha\in\omega_1\}$
that is a neighborhood assignment for those points. 
For each $ \alpha$,  $\overline{U_\alpha}\subset W_\alpha$
and $W_\alpha\cap\omega_1$
 is also contained in $\alpha+1$.
Then we set $A_\alpha = U_\alpha \cap \omega_1$.
Condition (1) is trivial to arrange but 
condition (2)  
is a $\diamondsuit$-like condition. A distinction with
S-space task is that the non-existence of a
Moore-Mrowka task extracted from a compact space
of countable tightness does
not imply that the space is sequential. The similarity
with S-space task is that we will use a Moore-Mrowka
task to generically introduce an algebraic free sequence.

\begin{definition}
 Let $\mathcal A=\{ A_\alpha : \alpha\in\omega_1\}$
 be a Moore-Mrowka task and let $C\subset\omega_1$ be  
 a cub. The poset $\mathscr M (\mathcal A, C)$ is the set
 of finite subsets of $\omega_1\setminus \min(C)$
  that are separated by
  $C$. 
 For each $H\in \mathscr M(\mathcal A,C)$ and each $\beta\in H$,
   let $A(H,\beta) $ be the intersection of
   the family $\{ A_\gamma : \gamma\in H, \ \beta\in A_\gamma \}$. 
  We define $H < K$ from $\mathscr M (\mathcal A, C)$ 
 providing  $H\supset K$ 
and for each $\alpha\in H\cap \max( K)$,
  $\alpha\in A(K,\min(K\setminus \alpha))$. 
\end{definition}

\begin{lemma}
 Let $\lambda\in \mathbf{E}$ and let 
 $\dot R_0$ be a $P_{\lambda}(I_\lambda) $-name 
 that is forced by $P_\lambda$ to be
  ccc poset. Let $\dot R$ be a $P_\lambda$-name of a
   ccc poset such $\mathbf{1}_{P_\lambda}$ forces 
    that $\dot R_0\subset_c \dot R$.
 Assume that $\mathcal A = \{\dot A_\xi : \xi\in\omega_1\}$
 is a sequence of $P_\lambda(I_\lambda) \star \dot R_0$-names 
 of subsets
 of $\omega_1$ such\label{step3}
 that $P_{\lambda}\star \dot R$ forces 
 that $\mathcal A$ is a Moore-Mrowka   task.  
 Then the $P_{\lambda+2}$-name 
 $\mathscr M (\mathcal U,\dot C_\lambda)$
 satisfies that $P_{\lambda+2}$ forces that $  \dot R\times 
 \mathscr M (\mathcal U,\dot C_\lambda)$ is ccc.
 \end{lemma}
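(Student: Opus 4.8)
The plan is to run the proofs of Proposition~\ref{step1} and Corollary~\ref{step2} with $Q(\mathcal U,\dot C)$ replaced throughout by $\mathscr M(\mathcal A,\dot C)$, locating the few places where a special feature of $Q(\mathcal U,\dot C)$ or of the notion of S-space task is used, and checking the corresponding statement for $\mathscr M(\mathcal A,\dot C)$ and the Moore-Mrowka task. As in Corollary~\ref{step2} I would first fix a $P_\lambda$-generic filter $G_\lambda$, set $R=\val_{G_\lambda}(\dot R)$, and work in $V[G_\lambda]$, where $\mathcal A$ is now a sequence of $R$-names forced to be a Moore-Mrowka task; since $P_{\lambda+2}\simeq P_\lambda\star\dot Q_\lambda\star\dot Q_{\lambda+1}$ and $\dot C_\lambda$ is the cub added by $\dot Q_{\lambda+1}$, it suffices to show that $R\times\mathscr M(\mathcal A,\dot C_\lambda)$ is ccc over $V[G_\lambda]$. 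Before anything else I would record the shape of compatibility in $\mathscr M(\mathcal A,C)$: if $H,F$ are conditions with $\max H<\min F$, then $H\cup F\le H$ automatically (for $\alpha\in H$ one has $\min(H\setminus\alpha)=\alpha$, so the requirement ``$\alpha\in A(H,\min(H\setminus\alpha))$'' is trivial), while $H\cup F\le F$ holds exactly when $H\subseteq A(F,\min F)$ (for $\alpha\in H$ one has $\min(F\setminus\alpha)=\min F$, so the requirement becomes precisely $\alpha\in A(F,\min F)$). Note also that $A(F,\min F)\subseteq A_{\min F}\subseteq\min F+1$, so $H\subseteq A(F,\min F)$ already forces $\max H\le\min F$.

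The combinatorial core is the exact analogue of Proposition~\ref{step1}: I would prove that $R\times P_2$ forces that for every $n$, every uncountable pairwise disjoint $\mathcal H\subseteq\mathscr M(\mathcal A,\dot C_1)\cap[\omega_1]^n$ has a countable subfamily $\mathcal H_0$ and a $\delta$ such that for every $F\in[\omega_1\setminus\delta]^n$ there is an $H\in\mathcal H_0$ with $H\subseteq A(F,\min F)$. Granting this, ccc follows as in Proposition~\ref{step1}: for an uncountable antichain, the usual $\Delta$-system argument together with the re-indexing of that proof reduce to the case of a pairwise disjoint $\mathcal H\subseteq[\omega_1]^n$ with each $\dot H_\xi$ separated from below by $\dot C_1$, and then one takes $F$ to be a member of $\mathcal H$ with $\min F$ above both $\delta$ and $\sup\bigcup\mathcal H_0$, so that $H\cup F$ witnesses $H\not\perp F$ by the computation above. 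To prove the core statement I would reuse the $\omega_1$-fullness apparatus of Proposition~\ref{step1} essentially verbatim: $\Gamma_\xi(H,(r,p))$ for $H\in[\omega_1]^n$, and the recursive notion of $\omega_1$-fullness for $|H|<n$ in terms of canonical $R$-names $\{\dot\eta_\zeta:\zeta\in\omega_1\}$, are defined as there, and Claims~\ref{three} and~\ref{four} carry over word for word since neither refers to the task.

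The one genuine change is in Claim~\ref{two}. There the S-space task property was used to choose, at each of the $n$ stages, an ordinal $\eta_i$ avoiding the finite union $\bigcup_{\zeta\in F}\dot U_\zeta$; instead I would choose $\eta_i\in A(F,\min F)$, using clause~(2) of the definition of a Moore-Mrowka task. At a stage with $\omega_1$-fullness witnessed by some $\{\dot\eta_\zeta:\zeta\in\omega_1\}\in M$, $r$ forces the range of this sequence to be an uncountable $A\subseteq\omega_1$; by elementarity there is a witnessing ordinal $\delta_A\in M$ for clause~(2) applied to $A$, and since $F\subseteq\omega_1\setminus M$ we have $\min F>\delta_A$, so clause~(2) with $\beta=\min F$ and the finite set $\{\gamma\in F:\min F\in A_\gamma\}$ gives $(A\cap\delta_A)\cap A(F,\min F)\neq\emptyset$. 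Hence, just as the side condition $\bar r$ is extended in Claim~\ref{two}, one extends the side condition to force (for a suitable $\zeta\in M$) that $\dot\eta_\zeta$ takes a value in $A(F,\min F)$ while keeping $\Gamma_\xi(\{\eta_1,\dots,\eta_i\},(s,p))$ $\omega_1$-full; after $n$ stages $H=\{\eta_1,\dots,\eta_n\}\in M$ lies in $A(F,\min F)$, which is the analogue of condition~(2) of Claim~\ref{two}. The remaining deviations from the $P_\lambda\star P_2$ setting are the three noted in Corollary~\ref{step2}: the cub-selection step inside Claim~\ref{four}, where one needs $p_2(\lambda+1)\in\dot Q_{\lambda+1}$ with $\pi_1(p_2(\lambda+1))$ forced into $C\cup\delta_0$, is legitimate by Definition~\ref{conditions}(6) since $C$ lies in $V[G_\lambda]$; Lemma~\ref{Cohengeneric} applies to $P_{\lambda+2}$ and is used in the final assembly (the Cohen real trick at $\alpha=\lambda$), which otherwise proceeds exactly as the last paragraph of Proposition~\ref{step1}, the set $\mathcal W_F$ now consisting of the conditions in $M\cap\dot Q_{\lambda+1}$ that force, via a witnessing $(s,q)$ and some $\xi\in\delta$, that $\dot H_\xi=H$ with $H\subseteq A(F,\min F)$; and in Claim~\ref{two} the witnessing $q$ with $(s,q)\Vdash H\subseteq A(F,\min F)$ can be chosen as a $P_\lambda(I_\lambda)$-name, hence a genuine element of $\dot Q_{\lambda+1}$, because each $\dot A_\zeta$ is a $P_\lambda(I_\lambda)\star\dot R_0$-name.

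The step I expect to be the main obstacle is making Claim~\ref{two} run cleanly with clause~(2) substituted for the S-space task property: one must check that the witnessing ordinal $\delta_A$ can be captured inside $M$ at every stage, that the $F$-dependent finite set $\{\gamma\in F:\min F\in A_\gamma\}$ is a legitimate instance of the universally quantified finite set in clause~(2) even though $F\notin M$, and that the chosen $\eta_i$ can be fed back into the fullness recursion, so that the stage-$i$ name can simultaneously be forced into $A(F,\min F)$ and continue to witness $\omega_1$-fullness. Everything else is bookkeeping inherited from Proposition~\ref{step1} and Corollary~\ref{step2}.
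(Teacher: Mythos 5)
Your proposal is correct and follows the same global strategy as the paper (pass to $V[G_\lambda]$, rerun the fullness machinery of Proposition~\ref{step1} for the new poset, and finish with the Cohen real trick of Lemma~\ref{Cohengeneric} at $\alpha=\lambda$), but it distributes the work differently. The paper does not keep the $\omega_1$-fullness notion of Proposition~\ref{step1}: it replaces it by a notion of \emph{full} built on ``$\mathcal A$-large'' countable sets (those $B$ with $B\cap A_F\neq\emptyset$ for all sufficiently large $\beta$ and finite $F\subset J_\beta$, i.e.\ sets whose closure contains a tail), and it is in the analogue of Claim~\ref{four} (the paper's Claim~\ref{3-2}) that clause~(2) of Definition~\ref{MMT} is invoked, to show that the uncountable set of good one-step continuations contains an $\mathcal A$-large subset with a name in the relevant $M_{\alpha_i}$; the analogue of Claim~\ref{two} (the paper's Claim~\ref{3-1}) then merely unwinds $\mathcal A$-largeness to pick $\eta_i\in A_F$. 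You instead keep the old $\omega_1$-fullness verbatim, observe (correctly) that Claims~\ref{three} and~\ref{four} carry over since their proofs use only the propagation of non-fullness and the cub-separation of $\dot H_{\delta_0}$, and inject clause~(2) into the Claim~\ref{two} analogue. The points you flag as the main obstacle do resolve as you expect: the witness $\delta_A$ depends only on $A$, so it has a name in $M$ and is forced below $\delta\leq\min F$; the set $\{\gamma\in F:\min F\in A_\gamma\}\subset J_{\min F}$ is a legitimate instance of the universally quantified finite set; and the chosen $\eta$ lies below $\delta_A$, so its index $\zeta$ and the conditions of the canonical name $\dot\eta_\zeta$ land in $M$, letting the recursion continue. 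Your compatibility computation ($H\cup F\leq F$ iff $H\subseteq A(F,\min F)$, the other inequality being automatic) is exactly the reduction the paper uses when it says it suffices to find $\eta<\delta$ with $\dot H_\eta\subset\dot A_F$ for each finite $F\subset\dot J_{\min(\dot H_\delta)}$. The paper's reformulation buys a transparent topological reading of fullness and a direct (non-contradiction) proof of Claim~\ref{3-2}; yours buys maximal verbatim reuse of Proposition~\ref{step1}. Both are sound.
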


\begin{proof}
    The   proof proceeds much as it did
   in Lemma \ref{step1} 
   and Corollary \ref{step2} for S-space tasks.
To show that a poset of the form $\mathscr M(
\mathcal A, C)$ is ccc, it again suffices to prove
that, for each $n\in\omega$, there is no 
uncountable antichain consisting of pairwise
disjoint sets of cardinality $n$. So we consider
an arbitrary family of pairwise disjoint sets of
cardinality $n$.   
   Fix  $P_{\lambda+2}\star\dot R$-names 
   $\{ \dot H_\xi : \xi\in\omega_1\}$ 
     for a set of pairwise disjoint elements of
      $\mathscr M (\mathcal A, \dot C_\lambda)\cap [\omega_1]^n$. 
      Following Lemma \ref{step1}, we may assume
      that, for each $\xi\in\omega_1$, it is forced
      that $\xi<\min(\dot H_\xi)$ and that
       $\{\xi\}\cup \dot H_\xi$ is also separated by 
       $\dot C_\lambda$.
      We prove
      that no condition forces this to be an antichain. 
  
   Let $M$ be a countable elementary submodel
      containing all the above and let
      $p_1\in P_{\lambda+2}$ be chosen as
      in Lemma \ref{Cohengeneric} so that
       $p_1$ is $(M,P_{\lambda+2})$-generic 
       and so that $p_1(\lambda)\in M$.      
      Let
      $p_1\restriction\lambda \in
      G_\lambda$ be a $P_\lambda$-generic filter 
      and pass to the extension $V[G_\lambda]$.
      Let $R=\val_{G_\lambda}(\dot R)$ and let
      $G_1\subset \mathcal C_{\omega_1}$ so that
       $
       p_1\restriction{\lambda+1}\in
       G_\lambda\star G_1$ is $P_{\lambda+1}$-generic.
       Let $\delta=M\cap \omega_1$.
       We 
       will prove that  $p_1$ forces that
       $\dot H_\delta$ is   
 compatible with some element of $\{  \dot H_\eta : \eta\in \delta\}$.
 
 For each $\zeta\in\omega_1$, let, in $V[G_\lambda]$,
 $\dot J_\zeta$ denote the 
  $R$-name for the set $  \{ \gamma : 
 \zeta \in \dot A_\gamma\}$
 and,
 for each finite 
 $F\subset \omega_1$, also let 
 $\dot A_F$ denote
 the $R$-name for $\bigcap_{\gamma\in F}\dot A_\gamma$. 
 We leave the reader to check that it
  suffices to prove that $p_1$ forces that
  for each finite $F\subset \dot J_{ \min(\dot H_\delta)}$, 
  there is an $\eta <\delta$ such that $\dot H_\eta\subset
  \dot A_F$. For each $\zeta\in\omega_1$ and
  finite $F\subset \omega_1$,
   we will let $J_\zeta$ and $A_F$ denote
  $\val_{G_R}(\dot J_\zeta)$ and    
  $\val_{ G_R}(\dot A_F)$ respectively.
  Also, for the remainder of the proof we will
  treat each $\dot H_\xi$ as the
  canonical
   $R\times (Q_\lambda\star \dot Q_{\lambda+1})$-name
   obtained from the evaluation of the original
    $P_{\lambda+2}\star \dot R$-name by $G_\lambda$.
    For each $\xi\in\omega_1$ and $H\in [\omega_1]^n$,
    let $\Gamma_\xi(H)$ be the (possibly empty) set
    of conditions in $R\times
    (Q_\lambda\star \dot Q_{\lambda+1})$
    that force $H$ to equal $\dot H_\xi$.

  We need an updated version of $\omega_1$-full.  
  Say that a countable set $B$, in $V[G_\lambda][  
  G_R ]$,
   is $\mathcal A$-large if 
  there is a $\gamma\in\omega_1$ such that 
   $B\cap A_F\neq\emptyset$ for all $\beta\in \omega_1\setminus
   \gamma$ and finite $F\in J_\beta$. We may interpret
   this as that $\overline{B}$ contains $\omega_1\setminus \gamma$.

    For  $\xi\in\omega_1$ and 
   $(r,p)\in R
   \times  ( Q_{\lambda} \star
 \dot  Q_{\lambda+1})$, 
    let $\Gamma_\xi(H,(r,p ))$ 
     be the set   of conditions in $\Gamma_\xi(H)$
 that are below $(r,p)$.
     In other words, 
     $\Gamma_\xi(H,(r,p))$ is not empty
     if and only if $(r,p   )\not\Vdash H\neq \dot H_\xi$.  
Similarly, for each $0<i<n$ and $H\in [\omega_1]^{i}$,
 let  $\Gamma_\xi(H,(r,p)) = \bigcup\{ \Gamma_\xi(H\cup \{\eta\},
 (r,p)) :
  \eta\in \omega_1\}$. 
For $H\in [\omega_1]^n$, say that
 $\Gamma_\xi(H,(r,p))$ is  full if 
 $\Gamma_\xi(H,(\bar r,p))$ is not empty
 for all $\bar r\leq r$. 
  For $0<i<n$ and $H\in [\omega_1]^{n-i}$, say that
   $\Gamma_\xi(H,(r,p))$ is  full if 
   there is a $R $-name $\dot B$
   that is forced to be
   an  $\mathcal A$-large set of $\eta\in\omega_1$
   and, for each $\eta$ and $s \Vdash \eta\in \dot B$,
  $\Gamma_\xi(H\cup \{\eta\}, (s,p))$ is
      full.

\begin{claim}
Suppose that $\xi,r,p\in M[G_\lambda]$ and that
$\Gamma_\xi(\emptyset , (r,p))$ is full.
Suppose also that\label{3-1} $\bar r\in R$
forces that 
   $F$ is a finite subset
   of $\dot J_{\zeta}$ for some $\delta\leq \zeta\in\omega_1$.
   Then 
  there are $(s,q), H\in M[G_\lambda]$ and $\bar s<\bar r$
    such that 
   
\begin{enumerate}
 \item $(s,q) < (r,p)$ in $R\times (Q_\lambda\star \dot Q_{\lambda+1})$,
 \item $\bar s< s $,
 \item $\bar s\Vdash H\subset \dot  A_F$,
 \item $(s, q)\Vdash  \dot H_\xi = H$.
\end{enumerate}
\end{claim}

\bgroup
\def\proofname{Proof of Claim:\/}

\begin{proof} 
There is an   $R\times Q_\lambda$-name
$\dot B_0\in M[G_\lambda] $ that is forced to be a
$\mathcal A$-large  subset of $\delta$
and witnesses that
$\Gamma_{\xi}(\emptyset,(r,p))$ is full.
Therefore there are $\eta<\delta$ and $r' < \bar r$ 
  such that $ \bar r_1 \Vdash \eta\in \dot B_0
 \cap A_F$. There is no loss to assuming,
 by elementarity, that
 $ \bar  r_1   $ extends some $ r_1 \in M[G_\lambda]$ 
 such that $ r_1 \Vdash \eta\in \dot B_0$.
 Since $r_1\Vdash \eta\in \dot B_0$,
  we have that $\Gamma_\xi(\{\eta\},
   (r_1,p))$ is full. Following a recursion
   of length $n$, there is an $\bar r_n < \bar r$
   in $R$, an $H=\{\eta_1,\ldots,\eta_n\}\in
    M[G_\lambda]$, and an $\bar r_n < r_n\in M[G_\lambda]$
    such that  $\bar r_n\Vdash H\subset A_F$ and
    $\Gamma_\xi(H, (r_n,p))$ is full.
    Since $\bar r_n < r_n$ , 
       $\Gamma_\xi(H, (\bar r_n,p))$ is not empty.
       Therefore there is a pair
        $(\bar s, \bar q) < (\bar r, p)$ forcing
        that $H=\dot H_\xi$. 
 By elementarity, since $\xi,H,p\in M[G_\lambda]$,
 the set of $\{ s\in R\cap M : (\exists q) ( (s,q) < (r_n,p) \ \&
 (s,q)\Vdash H=\dot H_\xi)\}$ is predense below $r_n$.
 Therefore 
  there is an $(s,q)<(r_n,p)\in M[G_\lambda]$ with
  $s\not\perp \bar r_n$ such that
   $(s,q)     \Vdash H=\dot H_\xi$. Let $\bar s$ be any
   extension of $s,\bar r_n$.
 \end{proof}

\begin{claim}
 For every $(r,p) \in R\times (Q_\lambda\star    \dot Q_{\lambda+1})$, 
  there is a $\delta$ so\label{3-2}
and a $r_0<r$ such  that 
 $\Gamma_\delta(\emptyset, (r_0,p))$ is  full.
\end{claim}

\begin{proof}
 Let $(r,p)\in 
 M_0$ be a countable elementary submodel of 
 $H(\kappa^+)[G_\lambda]$
 so that $\{ \mathcal A,  R, P_{\lambda+2}  \}\in M_0$. 
Choose any $(\bar r,\bar p)<(r,p)$ that is an
  $(M_0 , R\times (Q_\lambda\star    \dot Q_{\lambda+1}))$-generic condition.
   Let $\delta_0 = M_0\cap \omega_1$.
 Choose any continuous $\in$-chain 
  $\{ M_\alpha : 0<\alpha < \omega_1\}$
    of countable
  elementary submodels of $H(\kappa^+)[G_\lambda]$ such 
  that $\{M_0,(\bar r,\bar p) \}\in M_1 $.

  For each $\alpha\in\omega_1$, let $\delta_\alpha = 
   M_\alpha\cap \omega_1$.     Let $C^* $ be the cub
    $\{ \delta_\alpha :\alpha\in\omega_1\}$.
Choose   any extension $( r_n, p_n)$
of $(\bar r,\bar p)$ such that 
$\pi_1(  p_2( \lambda+1)) \subset C^*\cup\delta_0$,
and so that there is an $H
=\{\xi_1,\ldots,\xi_n\}\in [\omega_1]^n$ with
 $(r_n,  p_n)\Vdash H= \dot H_{\delta_0}$. 
     Of course this implies that 
$ \Gamma_{\delta_0}(H,( r_n,p) )
\supset \Gamma_{\delta_0}(H,(r_n,p_n))$ is 
actually full.
  Okay,
  then $H_{n-1}= \{\xi_1,\ldots, \xi_{n-1}\} $ is 
  in $M_{\alpha_n}$. 
 Let's take the $R$-name 
  $\dot E_{n-1}$ to the set of $(\eta, \tilde r) $ such 
  that $\Gamma_{\delta_0}(\{\eta\}\cup H_{n-1}, (\tilde r, p))$
  is full. The condition $r_n$ forces that
   $\dot E_{n-1}$ is uncountable. Since
    $\mathcal A$ is a Moore-Mrowka task
    in $V[G_\lambda\star G_R]$, $r_n$ forces
    that $\dot E_{n-1}
    \in M_{\alpha_n}$ contains an $\mathcal A$-large set.
    By elementarity and the fact that $r_n$ is $(M_{\alpha_n},
     R)$-generic, there is an $r_{n-1}$ in $M_{\alpha_n}$
     that forces $\dot E_{n-1}$ contains an
     $\mathcal A$-large set. Therefore, for such
     an $r_{n-1}\in M_{\alpha_n}$, 
     we have that $\Gamma_{\delta_0}(H_{n-1}, 
      (r_{n-1},p))$ is full. This recursion continues
      as above and for each $ i<n$,
       there is an $r_i \in M_{\alpha_i}$ such that 
    $\Gamma_{\delta_0}(\{\xi_j : j<i\}, (r_i,p))$ is  
    full. Setting $\delta  = \delta_0$, 
    this completes the proof of the Claim.
\end{proof}
\egroup

Following the proof of Corollary \ref{step2}  we
can complete the proof using
that $p_1$ satisfied the conclusion of 
 Lemma \ref{Cohengeneric}.
 Using Claim \ref{3-2}, it follows from
  Claim \ref{3-1} that in $V[G_\lambda][G_R]$,
  for each $\delta\leq \zeta\in\omega_1$
  and  finite 
  $F\subset    J_\zeta $,  the set
   $\mathcal W_F$ consisting of 
   those 
  $p\in M[G_\lambda]\cap (Q_\lambda\star \dot Q_{\lambda+1})$   
  for which there is a $\bar s\in G_R$ 
  and $\xi\in\delta$ such
  that $(\bar s, p)\Vdash \dot H_\xi \subset A_F$,
   is a dense subset of $M[G_\lambda]
   \cap (Q_\lambda\star \dot Q_{\lambda+1})$. 
   By the genericity of 
   $((G_1)\restriction\delta)\star 
    (p_1^\uparrow{}_\lambda)$ over the model 
     $V[G_\lambda\star R]$ 
          as in Lemma 
    \ref{Cohengeneric}, 
    it meets $\mathcal W_F$.  It  follows that $p_1$ forces
     that there is a $\xi\in \delta$ such
     that $\dot H_\xi \subset A_F$. 
     Applying this fact to
 $\zeta = \min(H_\delta)$ completes the proof.   
  \end{proof}

Now we show that Moore-Mrowka tasks will arise 
that will allow us to prove there is a minor additional
condition that we can place on the construction 
of $P_{\kappa+\kappa}$ 
(assuming an extra $\diamondsuit$-principle) that will
force there 
are no separable
Moore-Mrowka spaces of cardinality greater than $\mathfrak c$.
Let $S^\kappa_1$ denote the set of $\lambda\in \kappa$
that have cofinality $\omega_1$. We will
 assume there is a $\diamondsuit(
 S^\kappa_1)$-sequence.

We begin with this Lemma.

\begin{lemma}[${\mathfrak c}^{<\mathfrak c} =\mathfrak c$]
 Let $X$ be a separable Moore-Mrowka space
 of cardinality greater than $\mathfrak c$.   
 Let $X\in M$ be an elementary submodel of\label{kappaMM}
  $H(\theta)$ for some sufficiently large $\theta$
  such that $|M|=\mathfrak c$ and $M^\mu \subset M$
  for all $\mu<\mathfrak c$. 
For any   point $z\in X\setminus M$ there is a sequence
  $\{ B_\eta : \eta < \mathfrak c\}$ of countable subsets
  of $M\cap X $ satisfying, for all $\eta<\zeta<\mathfrak c$,
\begin{enumerate}
 \item $\overline{B_\eta}$ contains $B_\zeta\cup\{z\}$ 
\item for all $A\subset M\cap X$ with $z\in \overline{A}$,
there is an $\alpha
  < \mathfrak c$ such that
  $\overline{A}$ contains $B_\alpha$. 
 \end{enumerate}
\end{lemma}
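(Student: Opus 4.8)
The plan is to obtain $\langle B_\eta:\eta<\mathfrak c\rangle$ by a transfinite recursion of length $\mathfrak c$ in which, at stage $\eta$, the closure $\overline{B_\eta}$ is pushed inside $\overline{A_\eta}\cap\bigcap_{\eta'<\eta}\overline{B_{\eta'}}$, where $\langle A_\eta:\eta<\mathfrak c\rangle$ is a fixed surjective enumeration of the family of all countable $A\subseteq M\cap X$ with $z\in\overline A$. Such an enumeration exists: the family is nonempty, since any countable dense $D\subseteq X$ chosen inside $M$ lies in $M\cap X$ and has $z\in\overline D=X$; and it has size at most $|M\cap X|^{\aleph_0}\le\mathfrak c^{\aleph_0}\le\mathfrak c^{<\mathfrak c}=\mathfrak c$.

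The heart of the matter is a single reflection fact: \emph{if $G\in M$ and $z\in G$ then $z\in\overline{G\cap M}$.} First note $\mathfrak c\subseteq M$ (this follows from $|M|=\mathfrak c$ together with $M^\mu\subseteq M$ for all $\mu<\mathfrak c$). Since $X$ is separable and completely regular, $w(X)\le\mathfrak c$: a continuous map $X\to[0,1]$ is determined by its restriction to a countable dense set, so $|C(X,[0,1])|\le\mathfrak c^{\aleph_0}=\mathfrak c$, and $X$ embeds in $[0,1]^{C(X,[0,1])}$. Thus $X$ has a base $\mathcal B\in M$ with $|\mathcal B|\le\mathfrak c$, and as $\mathfrak c\subseteq M$ every member of $\mathcal B$ lies in $M$. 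If $z\notin\overline{G\cap M}$, pick an open $U$ with $z\in U$ and $U\cap G\cap M=\emptyset$, and then $B\in\mathcal B$ with $z\in B\subseteq U$; since $B,G\in M$ and $z\in B\cap G\neq\emptyset$, elementarity yields $x\in B\cap G\cap M\subseteq U\cap G\cap M$, a contradiction.

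With this in hand the recursion runs without obstruction. Given $\langle B_{\eta'}:\eta'<\eta\rangle$, each $B_{\eta'}$ being a countable subset of $M\cap X$ with $z\in\overline{B_{\eta'}}$, each $B_{\eta'}$ lies in $M$ (it is a countable subset of $M$ and $M^{\aleph_0}\subseteq M$), so the sequence $\langle B_{\eta'}:\eta'<\eta\rangle$, having length $<\mathfrak c$, lies in $M$; together with $A_\eta\in M$ this gives $G_\eta:=\overline{A_\eta}\cap\bigcap_{\eta'<\eta}\overline{B_{\eta'}}\in M$, and $z\in G_\eta$. By the reflection fact $z\in\overline{G_\eta\cap M}$, so by countable tightness of $X$ (a Moore-Mrowka space is countably tight) there is a countable $B_\eta\subseteq G_\eta\cap M$ with $z\in\overline{B_\eta}$; fix one. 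Then for $\eta<\zeta$, $B_\zeta\subseteq G_\zeta\subseteq\overline{B_\eta}$ and $z\in\overline{B_\eta}$, giving (1); and if $A\subseteq M\cap X$ has $z\in\overline A$, countable tightness gives a countable $A_0\subseteq A$ with $z\in\overline{A_0}$, $A_0=A_\eta$ for some $\eta<\mathfrak c$, and then $B_\eta\subseteq G_\eta\subseteq\overline{A_\eta}=\overline{A_0}\subseteq\overline A$, giving (2). The step I expect to take the most care is the reflection fact --- specifically the observation that separability forces $w(X)\le\mathfrak c$, so that a base of $X$ is absorbed into $M$ and $M$ computes closures of its members correctly; everything after that is bookkeeping.
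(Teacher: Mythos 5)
Your proof is correct and follows essentially the same route as the paper's: enumerate the countable subsets of $M\cap X$ with $z$ in their closure, and recursively push $B_\eta$ into $M\cap\overline{A_\eta}\cap\bigcap_{\eta'<\eta}\overline{B_{\eta'}}$ using elementarity plus countable tightness. Your explicit justification of the reflection step (that $w(X)\le\mathfrak c$ and $\mathfrak c\subseteq M$ force the base into $M$ pointwise) is a detail the paper leaves implicit, but the argument is the same.
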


\begin{proof}
 Since $X$ is separable, we can let $B_0\in M$ be any
 countable dense subset.  Fix an enumeration
  $\{ S_\xi : \xi <\mathfrak c\}$ of all the countable subsets
  of $M\cap X$ that have $z$ in their closure. 
  Let $\mathcal W\in M$ be a base for the topology.
 Assume we have 
 chosen $\{ B_\xi : \xi < \eta\}$ for some $\eta< \mathfrak c$.
 Assume, by induction, that $B_\xi$ is also a subset
 of $\overline{S_\xi}$. The set $\overline{S_\eta}
 \cup \{ \overline{B_\xi} : \xi <\eta\}$ is an element
 of $M$ and every member   contains $z$. 
 Let $K_\eta$ denote the intersection of this family.
 Choose any  neighborhood  $U  \in \mathcal W$
 of $z$. Since  $z\in W\cap K$, it follows from
 elementarity that $M\cap W\cap K_\eta$ is non-empty.
  Therefore, $z$ is in the closure of some countable
   $B_\eta \subset M\cap K_\eta$.
   This completes the inductive construction of the
   family. We simply have to verify that property
    (2) holds.  Let $z\in \overline{A}$ for some
     $A\subset M\cap X$. By countable tightness,
      there is an $\eta$ such that $S_\eta\subset A$.
      Therefore $\overline{A}\supset B_\eta$.
\end{proof}

\begin{remark}
 A compact separable space of cardinality at most
 $\mathfrak c$ will have a $G_\delta$-dense set
 of points of character less than $\mathfrak c$.
 Therefore,
 in a model with $\mathfrak p = \mathfrak c$, 
 any such space has the property that
 the sequential closure of any subset is countably
 compact. In particular, in such a model
 a Moore-Mrowka space necessarily
 has weight at least $\mathfrak c$ and
 will have a countably compact subset that
 is not closed. A space is said to be C-closed
 if it has no such subspace, see \cites{Ismail,Cclosed}. 
\end{remark}

\begin{definition}
Say that  a sequence $\langle y_\alpha, U_\alpha , 
W_\alpha : \alpha < \kappa\rangle$
 is a $\kappa$-MM sequence of a space $X$ if
 
\begin{enumerate}
 \item $U_\alpha, W_\alpha$ are open 
  in $X$ and 
 $y_\alpha\in U_\alpha \subset \overline{U_\alpha}\subset W_\alpha$,
  \item $y_\gamma\notin U_\alpha$ for all $\alpha< \gamma\in\kappa$,
   \item for all $\beta < \alpha<\kappa$, $U_\gamma\cap \{ y_\beta, y_\alpha\} =
    \{y_\alpha\}$ for some   $\alpha\leq \gamma
   \in \kappa$,
    \item for every $A \subset  \kappa $, there is a countable 
    $B\subset A$  and a $\gamma<\kappa$ such that  
    the closure of
       ${\{ y_\alpha : \gamma<\alpha <\kappa\}}$
       is either contained in 
       the closure of $ {\{ y_\beta : \beta\in B\}}$ 
       or is disjoint from the closure of 
  $ {\{y_\alpha : \alpha \in A\} }$.
\end{enumerate}
\end{definition}

\begin{theorem}
 Let   $\langle 
  P_\alpha , \dot Q_\beta : \alpha \leq \kappa+\kappa,
  \ \beta <\kappa+\kappa\rangle$
  be an iteration sequence  in the sense of Theorem \ref{mainthm}.
  In particular, assume that for all $\mu<\kappa$
  there is a\label{reflectMM}
        $\q_\mu\in   \mathcal Q(\mu,\mu)$
    satisfying that $P_{\kappa+\lambda}$ is
    equal to $P^{\q_\mu(\kappa)}_{\kappa+\mu}$.     

     Let $\dot X$ be a 
   $P_{\kappa+\kappa}$-name of a compact
   separable space of countable tightness. Assume
   also  that $\langle
   \dot y_\alpha, \dot U_\alpha, \dot W_\alpha :
   \alpha <\kappa\rangle$ is forced
   to be a $\kappa$-MM sequence of $\dot X$.   
      Then there is a
   cub $C_{\dot X}\subset \kappa$ such that for
  each
    $\lambda\in C_{\dot X}\cap  S^\kappa_1$, 
    there is an injection $f_\lambda :\omega_1 
    \rightarrow  \lambda$ such
    that 
     $\mathcal A = \langle  \dot A_\eta : \eta < \omega_1\rangle$,
      where $\dot A_\eta  = 
     \{ \xi : y_{f_\lambda(\xi)}
     \in \dot U_{f_\lambda(\eta)}\}$,
 is
      forced by $P_{\lambda+\lambda}^{\q_\lambda}$
      to be a Moore-Mrowka task.
       
\end{theorem}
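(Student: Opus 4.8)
The plan is a reflection argument, carried out in the ground model $V$ (where GCH holds, so elementary submodels of size $\aleph_1$ closed under $\omega$-sequences are available) and using that each $P_{\kappa+\mu}$ is $\aleph_2$-cc (Lemma~\ref{noreals} for $P_\lambda$, together with the ccc finite-support tail). Since the requirements $\eta\in\dot A_\eta\subseteq\eta{+}1$ and clauses (1)--(2) of the Moore--Mrowka task definition refer only to the relation ``$\dot y_\xi\in\dot U_\eta$'', I would first replace the $\kappa$-MM sequence by the single $P_{\kappa+\kappa}$-name $\dot E$ for $\{(\alpha,\beta)\in\kappa\times\kappa:\dot y_\alpha\in\dot U_\beta\}$; by $\aleph_2$-cc it may be chosen with antichains of size $\le\aleph_1$, so $\dot E\in H(\kappa^+)$. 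Then, for a large regular $\theta$, I would build a continuous increasing $\in$-chain $\langle N_\xi:\xi<\kappa\rangle$ of elementary submodels of $H(\theta)$ containing $\dot X$, the $\kappa$-MM sequence, the whole iteration, $\langle\q_\mu:\mu<\kappa\rangle$, $\langle\dot Q'_\mu:\mu<\kappa\rangle$, $\mathscr I$, $\sqsubset$, $\dot E$ in $N_0$, with $\omega_1\subseteq N_0$, $\langle N_\zeta:\zeta\le\xi\rangle\in N_{\xi+1}$, $|N_\xi|<\kappa$, each $N_\xi\cap\kappa$ an ordinal, and $N_\xi$ closed under $\omega$-sequences whenever $\cf(\xi)>\omega$. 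The club $C_{\dot X}$ would be the set of $\lambda<\kappa$ with $N_\lambda\cap\kappa=\lambda$, further intersected with the club of $\lambda$ for which $\{I_\gamma:\gamma\in\mathbf E\cap\lambda\}$ still has clause~(4) of the defining properties of $\mathscr I$ with $\lambda$ in place of $\kappa$, and for which $\operatorname{trcl}(\q_\mu)\cup\operatorname{trcl}(\dot Q'_\mu)\subseteq N_\lambda$ for all $\mu<\lambda$.

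For $\lambda\in C_{\dot X}\cap S^\kappa_1$ I would take the transitive collapse $\pi\colon N_\lambda\to\bar N_\lambda$, which fixes every ordinal $<\lambda$ and everything hereditarily of size $<\lambda$ in $N_\lambda$, with $\pi(\kappa)=\lambda$ and $\pi(\kappa+\kappa)=\lambda+\lambda$. The crucial point is $\pi(P_{\kappa+\kappa})=P^{\q_\lambda}_{\lambda+\lambda}$: $\pi$ fixes $\mathscr I\restriction\lambda$, $\langle\q_\mu:\mu<\lambda\rangle$, $\langle\dot Q'_\mu:\mu<\lambda\rangle$ and $\sqsubset$, the recursion of Theorem~\ref{mainthm} that builds the iteration is absolute for the transitive $\bar N_\lambda$, and the hypothesis $P_{\kappa+\mu}=P^{\q_\mu(\kappa)}_{\kappa+\mu}$, read inside $N_\lambda$ and then collapsed, forces $\pi(P_{\kappa+\kappa})$ to coincide with $P^{\q_\lambda}_{\lambda+\lambda}$. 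Writing $\dot E'=\pi(\dot E)$, $\dot X'=\pi(\dot X)$ and $\langle\dot y'_\alpha,\dot U'_\alpha,\dot W'_\alpha:\alpha<\lambda\rangle$ for the collapse of the $\kappa$-MM sequence, elementarity of $N_\lambda\prec H(\theta)$ and absoluteness of the forcing relation between $\bar N_\lambda$ and $V$ then yield that $P^{\q_\lambda}_{\lambda+\lambda}$ forces $\langle\dot y'_\alpha,\dot U'_\alpha,\dot W'_\alpha:\alpha<\lambda\rangle$ to be a $\lambda$-MM sequence of $\dot X'$ (the $\kappa$-MM definition read with $\lambda$ for $\kappa$) and $\dot E'=\{(\alpha,\beta)\in\lambda\times\lambda:\dot y'_\alpha\in\dot U'_\beta\}$.

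To finish I would construct $f_\lambda$. Using $\cf(\lambda)=\omega_1$ and $\aleph_2$-cc I would pick an order-preserving $f_\lambda\colon\omega_1\to\lambda$ whose range $R$ is cofinal in $\lambda$ and closed under separators --- for $b<a$ in $R$, fixing a maximal antichain deciding a name for the least $c\ge a$ with $\dot U'_c$ separating $\dot y'_a$ from $\dot y'_b$ (which exists by clause~(3)), all values thereby decided lie in $R$ --- and set $\dot A_\eta=\{\xi<\omega_1:(f_\lambda(\xi),f_\lambda(\eta))\in\dot E'\}$. Then $\eta\in\dot A_\eta$ by clause~(1) and $\dot A_\eta\subseteq\eta{+}1$ by clause~(2) together with order-preservation; clause~(1) of Definition~\ref{MMT} follows because extending a given condition into the fixed antichain decides a least separator $c\in R$ for the pair $(f_\lambda(\beta),f_\lambda(\alpha))$, whence $\gamma=f_\lambda^{-1}(c)$ witnesses $\dot A_\gamma\cap\{\beta,\alpha\}=\{\alpha\}$; and clause~(2) follows by applying the forced clause~(4) to the name $f_\lambda[\dot A]$ (for $\dot A$ forced uncountable), noting that $f_\lambda[\dot A]$ is forced cofinal in $\lambda$ so that the ``disjoint'' alternative fails, extracting a countable $B\subseteq f_\lambda[\dot A]$ with $\overline{\{\dot y'_\alpha:\gamma^*<\alpha<\lambda\}}\subseteq\overline{\{\dot y'_\beta:\beta\in B\}}$, and then checking that for $\beta$ past a suitable $\delta<\omega_1$ the point $\dot y'_{f_\lambda(\beta)}$ lies in $\overline{\{\dot y'_{f_\lambda(\xi)}:\xi\in\dot A\cap\delta\}}$, so every open set $\bigcap_{\gamma\in H}\dot U'_{f_\lambda(\gamma)}$ containing it meets that set --- exactly clause~(2) of Definition~\ref{MMT}.

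The main obstacle, I expect, is twofold. First, the collapse step: carefully identifying $\pi(P_{\kappa+\kappa})$ with $P^{\q_\lambda}_{\lambda+\lambda}$ and transferring the statement ``the sequence is a $\kappa$-MM sequence'' downward --- this is where the self-similarity hypothesis on the $\q_\mu$ is essential, and it is the reason $C_{\dot X}$ must be cut down so the relevant objects collapse correctly. Second, the construction of $f_\lambda$: $R$ must be simultaneously rich enough to be closed under separators (for clause~(1)) and thin enough to be order-isomorphic to $\omega_1$ while still cofinal in $\lambda$; since a maximal antichain deciding a separator can have size $\aleph_1$, this balance is genuinely constrained, and it is what forces one to use the cardinal arithmetic of $P^{\q_\lambda}_{\lambda+\lambda}$ under GCH and the structural fact (Corollary~\ref{noreals}) that the iteration adds reals only over its ccc part, which bounds how much such names can branch below $\lambda$, together with choosing $C_{\dot X}$ to contain enough closure points.
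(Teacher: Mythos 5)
Your overall strategy---collapse the whole configuration through an elementary submodel $N_\lambda$ with $N_\lambda\cap\kappa=\lambda$ and read the hypotheses off the collapsed structure---is genuinely different from the paper's argument, which never transfers the $\kappa$-MM property down to $\lambda$. The difference matters, and it is where your proof breaks. The statement ``$\langle \dot y_\alpha,\dot U_\alpha,\dot W_\alpha\rangle$ is a $\kappa$-MM sequence'' contains clause (4), a quantification over \emph{all} subsets $A\subset\kappa$ of the forcing extension. Collapsing $N_\lambda$ and invoking absoluteness of the forcing relation only yields clause (4) of the putative $\lambda$-MM property for the $P^{\q_\lambda}_{\lambda+\lambda}$-names that lie in $N_\lambda$ (equivalently in $\bar N_\lambda$). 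But your verification of clause (2) of Definition \ref{MMT} must handle an \emph{arbitrary} $P^{\q_\lambda}_{\lambda+\lambda}$-name $\dot A$ of an uncountable subset of $\omega_1$, and there are $\lambda^{\aleph_1}>\lvert N_\lambda\rvert$ many inequivalent such names, so the one you need will in general not be in $N_\lambda$ and the collapsed clause (4) says nothing about $f_\lambda[\dot A]$. (Trying to apply the $\kappa$-level clause (4) to $f_\lambda[\dot A]$ in the big extension instead does not repair this: the closures it speaks of are computed in $X$ inside $V[G]$, not in the collapsed space over $V[G\cap P^{\q_\lambda}_{\lambda+\lambda}]$, and the $\gamma$ it produces may exceed $\lambda$.) The paper's route is designed precisely to avoid this: it evaluates $\dot A$ in the full $P_{\kappa+\kappa}$-extension, passes to the projection $X_\lambda$ of $X\subset[0,1]^\kappa$, uses countable tightness of $X_\lambda$ at the unique $\kappa$-accumulation point $z$ to extract a \emph{countable} $B\subset A$, notes that $B$ (being countable) has a name inside the submodel $M$ by Corollary \ref{noreals} and the ccc of the tail, and only then applies the $\kappa$-MM property, by elementarity, to that reflected countable name. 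Your argument needs some substitute for this step.

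Two smaller points. First, the set of $\lambda$ for which $\{I_\gamma:\gamma\in\mathbf E\cap\lambda\}$ satisfies clause (4) of the requirements on $\mathscr I$ with $\lambda$ in place of $\kappa$ is \emph{empty} for $\lambda$ of uncountable cofinality: a cofinal $I\in[\lambda]^{\aleph_1}$ can never be contained in any $I_\gamma\subset\gamma+1$ with $\gamma<\lambda$. As written your $C_{\dot X}\cap S^\kappa_1$ is therefore empty and the theorem you prove is vacuous; fortunately that clause plays no role in defining $P^{\q_\lambda}_{\lambda+\lambda}$ (only in the global bookkeeping), so the requirement should simply be deleted. Second, the identification $\pi(P_{\kappa+\kappa})=P^{\q_\lambda}_{\lambda+\lambda}$ requires $P_\mu\subset N_\lambda$ for all $\mu<\lambda$ (not merely that various transitive closures of names are contained in $N_\lambda$), since otherwise $\pi(P_{\kappa+\kappa})=P_{\kappa+\kappa}\cap N_\lambda$ is a proper, and not obviously dense or complete, subposet; this is arrangeable under GCH but needs to be said. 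On the positive side, your explicit closure of $\operatorname{range}(f_\lambda)$ under (names for) least separators is a more careful treatment of clause (1) of Definition \ref{MMT} than the paper's, and that part of your construction is sound once one adds the routine club of $\lambda$ below which the decided separator values stay bounded.
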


\begin{proof} 
We may assume,
 since it is forced to be compact and separable,
  that $\dot X$ is a $P_{\kappa+\kappa}$-name
of a closed subspace of $[0,1]^\kappa$. 
Let $G$ be a $P_{\kappa+\kappa}$-generic filter 
so that we may make some observations about
 $\dot X$ and the   $\kappa$-MM sequence
  $\langle
     y_\alpha,   U_\alpha,   W_\alpha :
   \alpha <\kappa\rangle$. There is a point
    $z\in \val_G(\dot X)$ that is a
   $\kappa$-accumulation point of 
   $\{y_\alpha :\alpha\in \kappa\}$. 
   We check that $z$ is the unique such point.
   If $U,W$ are open neighborhoods of $z$
   with $\overline{U}\subset W$, 
   then $A= \{ \alpha \in \kappa : 
    y_\alpha\in U\}$ is cofinal in $\kappa$.
     By condition (4) of the $\kappa$-MM property,
      there is a countable $B\subset A$
      so that the closure of $\{ y_\beta : \beta\in B\}$
      contains $\{ y_\alpha : \sup(B) <\alpha<\kappa\}$.
      It thus follows that 
      that
      $\{ y_\alpha : \sup(B) <\alpha<\kappa\}$
      is contained in $W$ and shows that
       $X\setminus W$ contains no $\kappa$-accumulation
       points of $\{ y_\alpha : \alpha\in \kappa\}$. 
       Now assume that $z$ is in the closure
       of $\{ y_\beta : \beta\in A\}$ for some 
       $A\subset \kappa$.   
       Since the second clause
       of condition (4) of
the $\kappa$-MM property fails, it follows that 
there is a countable $B\subset A$ such that
 the closure of $\{ y_\beta : \beta\in B\}$
 contains a final segment of $\{ y_\alpha : \alpha
 \in \kappa\}$.  
 We will be interested in the subspace
  $X_\lambda = \{ x\restriction\lambda :  x\in X\}$ of
   $[0,1]^\lambda$. Since this space is 
   a continuous image of $X$, it also has countable
   tightness.
 Let $\dot z$ be a canonical
 $P_{\kappa+\kappa}$-name for $z$.

 Let $M\prec H(\kappa^+)$ so that 
 $\sup(M\cap \kappa)=\lambda
 \in S^\kappa_1$ and $M^\omega\subset M$.  
 We note that it follows from Corollary \ref{noreals},
 and the fact that $P_{\kappa+\kappa}/P_\kappa$ is ccc,
 that every countable subset of $M\cap \kappa$ in
 $V[G]$ has a name in $M$.  
 Assume also that $\dot z,\dot X, P_{\kappa+\kappa}$
 and the $\kappa$-MM sequence are elements of $M$.
  Choose any continuous $\in$-increasing sequence
  $\{ M_\eta : \eta\in \omega_1\}$ of countable elementary
  submodels of $M$ such that $Y_\lambda
   = \bigcup\{ M_\eta\cap \lambda : \eta\in\omega_1\}$ 
   is cofinal in $\lambda$. Define $f_\lambda$
   so that $f_\lambda(\eta) = \sup(M_\eta\cap \lambda)$. 
   It should be clear that to show that
   $\mathcal A$, as in the statement of the Theorem,
    is forced by
    $P^{\q_\lambda}_{\lambda+\lambda}$
    to be a Moore-Mrowka task 
  it is sufficient  to check that condition (2) of
   Definition \ref{MMT} is forced to hold. Let
    $\dot A$ be any 
    $P^{\q_\lambda}_{\lambda+\lambda}$-name
     of an uncountable subset of $\omega_1$. 
    We may regard $P^{\q_\lambda}_{\lambda+\lambda}$
    as a complete subposet of $P_{\kappa+\kappa}$ 
    and so consider $\val_G(\dot A)$ in $V[G]$.
    In the space $X_\lambda$, it is clear
    that $z\restriction \lambda$ is in the closure
    of the set $\{ y_{f_\lambda(\eta)} : \eta\in A\}$. 
    Therefore, there is a countable $B\subset A$
    such that 
    $z\restriction \lambda$ is in the closure
    of the set $
   \vec y (f_\lambda(B)) = \{ y_{f_\lambda(\eta)} : \eta\in B\}$. 
    Now $B$ 
    is a countable subset of $M\cap \lambda$,
     and so there is a $P^{\q_\lambda}_{\lambda+\lambda}$-name
     $\dot B$ in $M$ such that $\val_{G}(\dot B) $ is $B$.
     Now we can apply elementarity (using that
      $f_\lambda\restriction B\in M$) and 
     observe that $\dot z$ is forced to be in the closure
     of $\{ \dot y_{f_\lambda(\beta)}  : \beta\in \dot B\}$. 
     Moreover, by elementarity and the $\kappa$-MM
     property, there is a $\gamma\in \kappa\cap M$
     such that the closure
     of $\vec y(f_\lambda(\dot B))$   is forced to
     contain $\{ \dot y_\alpha : \gamma < \alpha < \kappa\}$. 
     For each $\gamma<\alpha <\kappa$,
        $\vec y(f_\lambda(\dot B))$ is forced to meet
        $\bigcap_{\zeta\in H}\dot U_\zeta$
         for all 
        finite $H\subset \{ \zeta : \alpha\in \dot U_\zeta\}$. 
     Of course there is an $\delta\in \omega_1$ such 
     that $\gamma < f_\lambda(\delta )$. This completes
     the proof that, for all $\beta\in \omega_1\setminus \delta$,
      $\dot A\cap \delta  $ is
     forced to meet      $\bigcap_{\zeta\in H}\dot A_\zeta$
     for all finite $H\subset \{ \zeta : \beta \in \dot A_\zeta\}$.
\end{proof}

\begin{theorem} It is consistent with Martin's Axiom
 and $\mathfrak c >\aleph_2$ that there are no
 S-spaces and that compact separable spaces of countable
 tightness have cardinality at most $\mathfrak c$.
\end{theorem}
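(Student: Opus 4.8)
The plan is to run the construction of Theorem~\ref{mainthm} and, alongside the bookkeeping that disposes of S-space tasks, to interleave the killing of Moore--Mrowka tasks coming from large compact separable spaces of countable tightness. We retain the standing hypothesis of GCH and, as anticipated above, assume in addition that the ground model carries a $\diamondsuit(S^\kappa_1)$-sequence $\langle \dot S_\lambda:\lambda\in S^\kappa_1\rangle$ (consistent with GCH, and a consequence of it when $\kappa$ is a successor cardinal). We fix a bijection $H(\kappa)\leftrightarrow\kappa$ so that $\langle \dot S_\lambda:\lambda\in S^\kappa_1\rangle$ may be read as guessing codes for pairs consisting of a Moore--Mrowka task name and a witnessing submodel.

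The first step is to record the exact analogue of Lemma~\ref{killS} with ``S-space task'' replaced by ``Moore--Mrowka task'' and $Q(\mathcal U,\dot C_\lambda)$ by $\mathscr M(\mathcal A,\dot C_\lambda)$: if $\mu<\kappa$, $\q\in\mathcal Q(\kappa,\mu)$, $\mathcal A=\{\dot A_\xi:\xi\in\omega_1\}$ is a sequence of $P^\q_{\kappa+\mu}$-names forced to be a Moore--Mrowka task, $\bar M\prec H(\kappa^+)$ has cardinality $\aleph_1$, is closed under $\omega$-sequences, contains $\mathcal A$ and $\q$, and $\lambda\in\mathbf E\cap\kappa$ satisfies $\bar M\cap\kappa\subset I_\lambda$, then $P^\q_{\kappa+\mu}$ forces that $\mathscr M(\mathcal A,\dot C_\lambda)$ is ccc. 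The proof copies that of Lemma~\ref{killS} verbatim: one factors $P^\q_{\kappa+\mu}\simeq P_\kappa\star\dot R$, reflects $\mathcal A$ to a $P_\lambda(I_\lambda)\star\dot R_0$-name, invokes Lemma~\ref{step3} in place of Corollary~\ref{step2} to get that $P_{\lambda+2}$ forces $\dot R\times\mathscr M(\mathcal A,\dot C_\lambda)$ ccc, and pushes cccness up through $P_\kappa$ by Lemma~\ref{cccpreserve}. Since $\mathscr M(\mathcal A,\dot C_\lambda)$ has cardinality $\aleph_1<\kappa$, adjoining it as an iterand keeps the iteration inside the classes $\mathcal Q(\kappa,\cdot)$ of Definition~\ref{defineQ}, so it meshes with the framework of Theorem~\ref{mainthm}.

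Next I would modify the recursion defining $\langle\dot Q_\beta':\beta<\kappa\rangle$ in the proof of Theorem~\ref{mainthm}. We keep serving the S-space bookkeeping exactly as there on a stationary set of stages reserved for it; on the complementary stationary set of stages $\lambda\in S^\kappa_1\cap\mathbf E$ we consult $\dot S_\lambda$: if, relative to the part $\q'\restriction\lambda$ already constructed, $\dot S_\lambda$ codes a $P^\lambda_{\lambda+\lambda}(\q')$-name $\mathcal A=\{\dot A_\xi:\xi\in\omega_1\}$ forced to be a Moore--Mrowka task together with a submodel $\bar M\prec H(\kappa^+)$ of size $\aleph_1$, closed under $\omega$-sequences, with $\mathcal A\in\bar M$ and $\bar M\cap\kappa\subset I_\lambda$, then we set $\dot Q_\lambda'=\mathscr M(\mathcal A,\dot C_\lambda)$; otherwise $\dot Q_\lambda'$ is trivial. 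By the Lemma of the previous step each such iterand is forced by the final iteration to be ccc, so --- exactly as in Theorem~\ref{mainthm} --- the sequence remains suitable, $P_{\kappa+\kappa}$ is cardinal preserving, forces $\kappa^{<\kappa}=\kappa=\mathfrak c$, and (with the same supplementary bookkeeping on the coordinates off $\mathbf E$) forces Martin's Axiom; the S-space bookkeeping is untouched, so $P_{\kappa+\kappa}$ still forces that there are no S-spaces.

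It remains to verify that in $V[G]$ no compact separable space of countable tightness has cardinality greater than $\mathfrak c$. Suppose $X$ were one. Since $\mathfrak c^{<\mathfrak c}=\mathfrak c$ holds, Lemma~\ref{kappaMM}, applied inside an appropriate elementary submodel, yields by a routine recursion a $\kappa$-MM sequence $\langle y_\alpha,U_\alpha,W_\alpha:\alpha<\kappa\rangle$ of $X$; fix $P_{\kappa+\kappa}$-names $\dot X$, $\langle\dot y_\alpha,\dot U_\alpha,\dot W_\alpha:\alpha<\kappa\rangle$ for these, and a subset $S\subset\kappa$ coding them together with the auxiliary data from which the maps $f_\lambda$ below are built. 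By Theorem~\ref{reflectMM} there is a cub $C_{\dot X}\subset\kappa$ such that for every $\lambda\in C_{\dot X}\cap S^\kappa_1$ there is an injection $f_\lambda:\omega_1\to\lambda$ for which $\mathcal A_\lambda=\langle\dot A_\eta:\eta<\omega_1\rangle$, $\dot A_\eta=\{\xi:y_{f_\lambda(\xi)}\in\dot U_{f_\lambda(\eta)}\}$, is forced by $P^{\q_\lambda}_{\lambda+\lambda}$ to be a Moore--Mrowka task, and the submodel $M$ produced there may be taken with $M\cap\kappa\subset I_\lambda$, so it witnesses the hypotheses of the Lemma of the second step. By the $\diamondsuit(S^\kappa_1)$-property the set $\{\lambda\in S^\kappa_1:\dot S_\lambda=S\cap\lambda\}$ is stationary and hence meets the cub $C_{\dot X}$; at such a $\lambda$ the construction read $\mathcal A_\lambda$ and the witness $M$ off $\dot S_\lambda$ and set $\dot Q_\lambda'=\mathscr M(\mathcal A_\lambda,\dot C_\lambda)$. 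But this poset is designed (cf.\ the discussion preceding Lemma~\ref{step3}) so that, below a suitable condition, its generic filter produces an uncountable algebraic free sequence in $X$, hence, as observed after Definition~\ref{freesequence}, an uncountable free sequence $\langle x_\xi:\xi\in\omega_1\rangle$ in $X$ whose initial and final segments are separated by open sets of the form $W_H$ and $X\setminus\overline{U_H}$ ($H$ finite) and therefore remain disjoint through the rest of the iteration. This contradicts Proposition~\ref{Arh78}, since $X$ is compact of countable tightness. Hence every compact separable space of countable tightness in $V[G]$ has cardinality at most $\mathfrak c$, which together with the previous paragraph proves the theorem. The step I expect to be the main obstacle is the coordination in this last paragraph: arranging the $\diamondsuit$-driven bookkeeping and the reflection of Theorem~\ref{reflectMM} so that the level $\lambda$ at which a given space is caught simultaneously supports a witnessing $\bar M$ with $\bar M\cap\kappa\subset I_\lambda$; once that is in place, the cccness of $\mathscr M(\mathcal A,\dot C_\lambda)$ in the full iteration is exactly what Lemma~\ref{step3} together with Lemma~\ref{cccpreserve} delivers.
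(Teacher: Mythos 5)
Your overall architecture is the same as the paper's: prove a Moore--Mrowka analogue of Lemma~\ref{killS} via Lemma~\ref{step3} and Lemma~\ref{cccpreserve}, interleave a $\diamondsuit(S^\kappa_1)$-driven bookkeeping for the reflected tasks coming from Theorem~\ref{reflectMM} with the S-space and MA bookkeeping of Theorem~\ref{mainthm}, and derive a contradiction with countable tightness from the generically added free sequence. That much matches.

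The genuine gap is exactly the point you flag at the end and then leave unresolved: the requirement that the witnessing model satisfy $\bar M\cap\kappa\subset I_\lambda$. In your scheme the family $\mathscr I=\{I_\gamma:\gamma\in\mathbf E\}$ is fixed in advance, as in Section~2, and you hope that at a stage $\lambda$ where the diamond guesses correctly the set $I_\lambda$ happens to contain the relevant $\aleph_1$-sized set. This cannot be arranged for a pre-fixed $\mathscr I$: condition (4) on $\mathscr I$ only guarantees that $\{\gamma: I\subset I_\gamma\}$ is \emph{unbounded}, and --- unlike the S-space case, where the task lives in $H(\kappa)$ and one is free to wait for a convenient later $\lambda$ --- the Moore--Mrowka task $\mathcal A_\lambda$ produced by Theorem~\ref{reflectMM} is tied to the specific stage $\lambda=\sup(M\cap\kappa)$ via the cofinal injection $f_\lambda:\omega_1\to\lambda$, so it must be handled \emph{at} $\lambda$ with that particular $I_\lambda$. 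A stationary set of correct guesses need not meet the unbounded (not club) set of good indices, and in fact for a single given $M$ there is only one candidate ordinal $\lambda$. The paper's resolution, which is the real new content of this proof, is to construct $\mathscr I$ \emph{recursively alongside the iteration}: it introduces acceptable partial sequences $\mathscr I_\lambda$ indexed by a cub $D$, maintains properties (1)--(3) and a strengthened form of (4) inductively, and at a stage $\mu\in S^\kappa_1$ where the decoded diamond guess yields a genuine reflected task with witnessing set $I$, it \emph{chooses} $I_\mu\supset I$ at that moment before defining $\dot Q_\mu'=\mathscr M(\mathcal A,\dot C_\mu)$. Without this (or some substitute for it), the final paragraph of your argument does not go through.
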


\begin{proof}
Let $\kappa>\aleph_2$ be a regular cardinal in 
a model of GCH. 
 Using an iteration sequence as in Theorem \ref{mainthm},
 it follows from Theorem \ref{reflectMM} 
 and Lemma \ref{kappaMM}
 that it suffices to ensure that  for each $\dot X$ 
 and
 $\kappa$-MM-sequence as in Theorem \ref{reflectMM},
 there is a $\lambda\in C_{\dot X}\cap S^\kappa_1$
 so that $I_\lambda$ is chosen suitably and so that
  $\dot Q_{\kappa+\lambda}$
 is chosen to be $\mathscr M(\mathcal A,\dot C_\lambda)$
 for a sequence $\mathcal A$ as identified in 
 Theorem \ref{reflectMM}. This is a somewhat
 routine application of $\diamondsuit(S^\kappa_1)$.
 
 Since $S^\kappa_1$ is stationary, we may assume
 that $\diamondsuit(S^\kappa_1)$ holds in $V$.
 There are many equivalent formulations
 of $\diamondsuit(S^\kappa_1)$ and we choose this one:
  There is a sequence $\langle h_\alpha : \alpha 
  \in S^\kappa_1 \rangle $ satisfying
  
\begin{enumerate}
 \item for each $\alpha\in S^\kappa_1$, $h_\alpha :
 \alpha\times \alpha\rightarrow \alpha$ is  a function,
 \item for all functions $h:\kappa\times\kappa\rightarrow \kappa$,
 the set $\{ \alpha\in S^\kappa_1 : h_\alpha\subset h\}$ 
 is stationary.  
\end{enumerate}

We will also have to recursively define our
sequence $\mathscr I = \{ I_\gamma : \gamma\in 
\mathbf{E}\}$ since special choices will have to be
made for indices in $S^\kappa_1$ and which, due
to conditions (3) and (4) impact all the subsequent
choices. 
  To assist with
the condition (4) of the requirements on $\mathscr I$,
we choose
an enumeration $\{ J_\xi : \xi\in\kappa\}$
   of $[\kappa]^{\aleph_1}$ as follows.
Let $D\subset  \kappa$ be 
a cub consisting of 
$\lambda $ such that 
 $\mu+\mu^{\aleph_1}<\lambda$ for all $\mu<\lambda$.
 For each $\mu\in D $,
    the list $\{ J_\xi : \mu\leq \xi < \mu+\mu^{\aleph_1}\}$ 
    is an enumeration of $[\mu]^{\aleph_1}$.
     
Say that a sequence $\mathscr I_\lambda =
\{ I_\gamma : \gamma\in 
 \mathbf{E}\cap \lambda\} \subset [\lambda]^{\leq \aleph_1}
 $ is an acceptable sequence if 
 it satisfies the
 properties (1), (2), and (3) that we assume for the sequence
  $\mathscr I$ in section 2,
  and, it also satisfies that, for each $\xi<\mu\in \lambda$
  such that $\mu+\mu^{\aleph_1}<\lambda$, there is a $\zeta\in 
  \mathbf{E}\cap \mu+\mu^{\aleph_1}$
  such that $J_\xi \subset I_\zeta$.
  If $  \{ \mathscr I_\lambda : \lambda \in D\}$ is
 an increasing sequence of acceptable sequences,
   then the union,
   $\mathscr I$, satisfies the requirements of section 2.
   Similarly, once we have chosen an acceptable sequence
    $\mathscr I_\lambda$, 
     we will assume that the sequence 
     $\langle P_\alpha , \dot Q_\beta : \alpha\leq \lambda, 
     \beta < \lambda \rangle$ is defined as
     in Definition \ref{conditions} using the sequence
      $\mathscr I_\lambda$. 
   
   In a similar fashion, we relativize the definition
   of $\mathcal Q(\lambda,\mu)$ from Definition 
    \ref{defineQ}. Given an acceptable sequence
     $\mathscr I_\lambda$, say that a sequence
      $\q' = \{ \dot Q'_\beta : \beta < \lambda\} \in H(\kappa)$
      is $\mathscr I_\lambda$-suitable providing
      (as in Theorem \ref{mainthm}),
      by induction on $\beta <\lambda$, 
      $\dot Q^\lambda_\beta(\q)  = \dot Q'_\beta$ is a
      $P^\lambda_{\lambda +\beta}(\q)$-name of a ccc poset,
      where $P^\lambda_\alpha(\q) = P_\alpha$ for $\alpha\leq\lambda$
      and, for $\beta>0$, $P^\lambda_{\lambda+\beta}(\q)$ 
      is the usual poset from the iteration sequence
       $\langle P^\lambda_{\alpha}(\q)
       , \dot Q^\lambda_{ \zeta}(\q):  \alpha\leq \beta, \zeta < \beta
       \rangle$.
    
 Let $f$ be any function from $\kappa$ onto
 $H(\kappa)$.  We recursively choose our sequences
  $\{\mathscr I_\lambda : \lambda\in D\}$ and
   $\{ \dot Q_\gamma' : \gamma \in \kappa\}$. 
   The critical inductive assumptions are, for $\lambda\in D$,
\begin{enumerate}
 \item  $\mathscr I_\lambda$ extends $\mathscr I_\mu$
 for all $\mu\in D\cap \lambda$,
 \item $\mathscr I_\lambda$ is acceptable,
 \item $\{ \dot Q_\gamma' : \gamma<\lambda\}$ is 
 $\mathscr I_\lambda$-suitable.  
\end{enumerate}
  
  Now let $\lambda\in D$ and assume we have
  constructed, for each $\mu\in D\cap \lambda$,
  $\mathscr I_\mu$  and
  and $\{\dot Q_\gamma' :  \gamma <\mu\}$.
  If $D\cap \lambda$ is cofinal in $\lambda$,
   then we simply let $\mathscr I_\lambda
    =\bigcup\{\mathscr I_\mu : \mu\in D\cap \lambda\}$
    and there is nothing more to do. 
    Otherwise, let $\mu$ be the maximum element
    of $D\cap \lambda$.
    \medskip
    
    Case 1:
 $\mu\notin S^\kappa_1$. First choose any 
 acceptable $\mathscr I_\lambda\supset \mathscr I_\mu$. 
     Choose $\{ \dot Q'_\beta : \mu\leq\beta < \lambda\}$ by
     induction as follows. For $\mu<\beta \notin \mathbf{E}$,
       let $\q$ 
    denote $\{ \dot Q'_\gamma : \gamma < \beta\}$.
Let $\zeta<\kappa$ be minimal so that $\dot Q'_\beta = 
f(\zeta) $ is a
 $P^\mu_{\mu+\beta}(\q)$-name of a  ccc poset  that is not in the list
  $\{ \dot Q'_\gamma : \gamma <\beta\}$. 
  For $\mu \leq \beta\in \mathbf{E}$, choose, if possible
  minimal $\zeta  <\kappa$ so that  $f(\zeta)$ is
  equal to $Q(\mathcal U, \dot C_\beta)$ for some
  S-space task that is not yet handled
  and let $\dot Q'_\beta = f(\zeta)$.
   Otherwise, let $\dot Q'_\beta = \mathcal C_\omega$.
   \medskip
   
   The verification of the inductive hypotheses in Case 1 is routine.
   We also note that if the induction continues to $\kappa$,
   then $P^\kappa_{\kappa+\kappa}(\{\dot Q'_\beta : \beta <\kappa\})$
   will force that there are no S-spaces and that Martin's Axiom
   holds. 
   \medskip
   
   Case 2: $\mu\in S^\kappa_1$. Let $\q$ denote
    $\{\dot Q'_\beta : \beta < \mu\}$. 
 Now we decode the element
 $h_\mu$ from the $\diamondsuit$-sequence.
 If there is any $(\alpha,\xi)\in \mu\times\mu$
 such that $f(h_\mu(\alpha,\xi))$ is not a
 $P^\mu_{\mu+\mu}(\q)$-name, then proceed as
 in Case 1. For each $\alpha\in\mu$, 
 if $f(h_\mu(\alpha,0))$ is not a name of
 a finite subset of $\mu$, then proceed as in Case 1,
 otherwise let $\dot F_\alpha = f(h_\mu(\alpha,0))$.
 Similarly, if there is an $\alpha\in\mu$ such that
  $f(h_\mu(\alpha,1))$ is not a name of a positive
  rational number, then proceed as in Case 1,
  otherwise let $\dot \epsilon_\alpha = f(h_\mu(\alpha,1))$.
  If there is an $\alpha\in \mu$ and a $\xi >1$
  such that $f(h_\mu(\alpha,\xi))$ is not a name
  of a element of $[0,1]$, then proceed as in Case 1,
  otherwise let 
  $$
\mbox{for}\ (\alpha,\xi)\in \mu\times\mu\ \ \ 
  \dot y_\alpha (\xi) =\begin{cases}
     f( h_\mu(\alpha,\xi+2)) & \mbox{if }\ \xi<\omega\\
     f( h_\mu(\alpha,\xi)) & \mbox{if }\ \omega\leq \xi<\mu   
     \end{cases}\ .$$
 It now follows that $\dot y_\alpha$ is a name of
 an element of $[0,1]^\mu$ and let 
 the name   $\{ x\in [0,1]^\mu : (\forall \beta\in \dot F_\alpha)
   | x(\beta)-\dot y_\alpha(\beta)| < \dot \epsilon_\alpha\}$
   be denoted by
 $\dot U_\alpha$. Now we ask if there is a
 function $f_\mu : \omega_1 \rightarrow \mu$
 as in Theorem \ref{reflectMM}. In particular,
  if there is an $I\in [\mu]^{\aleph_1}$ and such
  a function $f_\mu :\omega_1\rightarrow \mu$
  such that the sequence $\mathcal A = 
   \{ \dot A_\eta : \eta\in\omega_1\}$ as defined
   in the statement of Theorem \ref{reflectMM} satisfies
   that $P^\mu_{\mu+\mu}(\q)$ forces that 
    $\mathcal A$ is a Moore-Mrowka task 
    and each $\dot A_\alpha$ is
    a $P^\mu_{\mu+\mu}(\q)(I)\star \dot R_0$-name
    in the sense of Lemma \ref{step3}. 
    If all this holds, then choose an appropriate $I_\mu$ so
    that $I\subset I_\mu$ and define
    $\dot Q_\mu'$ to be $\mathscr M(\mathcal A,
     \dot C_\mu)$. For the remaining choices
     proceed as in Case 1.
  
  The construction of $P_{\kappa+\kappa} = 
   P^\kappa_{\kappa  +\kappa}(\q)$ where
    $\q= \{ \dot Q_\beta' : \beta < \kappa\}$ is complete. 
As explained at the beginning of the proof, it
follows from Lemma \ref{kappaMM}
and
Theorem \ref{reflectMM},
and that the fact that $D$ is a cub,
  that separable Moore-Mrowka spaces in this model
  have cardinality at most $\mathfrak c$.   
\end{proof}

\begin{bibdiv}

\def\cprime{$'$} 

\begin{biblist}
 
  \bib{ARS}{article}{
   author={Abraham, Uri},
   author={Rubin, Matatyahu},
   author={Shelah, Saharon},
   title={On the consistency of some partition theorems for continuous
   colorings, and the structure of $\aleph_1$-dense real order types},
   journal={Ann. Pure Appl. Logic},
   volume={29},
   date={1985},
   number={2},
   pages={123--206},
   issn={0168-0072},
   review={\MR{801036}},
   doi={10.1016/0168-0072(84)90024-1},
}
			 
 \bib{Avraham}{article}{
   author={Avraham, Uri},
   author={Shelah, Saharon},
   title={Martin's axiom does not imply that every two $\aleph _{1}$-dense
   sets of reals are isomorphic},
   journal={Israel J. Math.},
   volume={38},
   date={1981},
   number={1-2},
   pages={161--176},
   issn={0021-2172},
   review={\MR{599485}},
   doi={10.1007/BF02761858},
}
\bib{Arh78}{article}{
   author={Arhangel\cprime ski\u{\i}, A. V.},
   title={The structure and classification of topological spaces and
   cardinal invariants},
   language={Russian},
   journal={Uspekhi Mat. Nauk},
   volume={33},
   date={1978},
   number={6(204)},
   pages={29--84, 272},
   issn={0042-1316},
   review={\MR{526012}},
}
	
	\bib{Balogh}{article}{
   author={Balogh, Zolt\'{a}n T.},
   title={On compact Hausdorff spaces of countable tightness},
   journal={Proc. Amer. Math. Soc.},
   volume={105},
   date={1989},
   number={3},
   pages={755--764},
   issn={0002-9939},
   review={\MR{930252}},
   doi={10.2307/2046929},
}

	\bib{chMM}{article}{
   author={Dow, Alan},
   author={Eisworth, Todd},
   title={CH and the Moore-Mrowka problem},
   journal={Topology Appl.},
   volume={195},
   date={2015},
   pages={226--238},
   issn={0166-8641},
   review={\MR{3414886}},
   doi={10.1016/j.topol.2015.09.025},
}
	
	\bib{DowCohen}{article}{
   author={Dow, Alan},
   title={Compact spaces of countable tightness in the Cohen model},
   conference={
      title={Set theory and its applications},
      address={Toronto, ON},
      date={1987},
   },
   book={
      series={Lecture Notes in Math.},
      volume={1401},
      publisher={Springer, Berlin},
   },
   date={1989},
   pages={55--67},
   review={\MR{1031765}},
   doi={10.1007/BFb0097331},
}

\bib{Cclosed}{article}{
   author={Dow, A.},
   title={Compact C-closed spaces need not be sequential},
   journal={Acta Math. Hungar.},
   volume={153},
   date={2017},
   number={1},
   pages={1--15},
   issn={0236-5294},
   review={\MR{3713559}},
   doi={10.1007/s10474-017-0739-x},
}

\bib{Fedorchuk}{article}{
   author={Fedor\v{c}uk, V. V.},
   title={Completely closed mappings, and the compatibility of certain
   general topology theorems with the axioms of set theory},
   language={Russian},
   journal={Mat. Sb. (N.S.)},
   volume={99 (141)},
   date={1976},
   number={1},
   pages={3--33, 135},
   review={\MR{0410631}},
}

 \bib{HJ73}{article}{
   author={Hajnal, A.},
   author={Juh\'{a}sz, I.},
   title={On hereditarily $\alpha $-Lindel\"{o}f and $\alpha $-separable spaces.
   II},
   journal={Fund. Math.},
   volume={81},
   date={1973/74},
   number={2},
   pages={147--158},
   issn={0016-2736},
   review={\MR{336705}},
   doi={10.4064/fm-81-2-147-158},
}

\bib{Ismail}{article}{
   author={Ismail, Mohammad},
   author={Nyikos, Peter},
   title={On spaces in which countably compact sets are closed, and
   hereditary properties},
   journal={Topology Appl.},
   volume={11},
   date={1980},
   number={3},
   pages={281--292},
   issn={0166-8641},
   review={\MR{585273}},
   doi={10.1016/0166-8641(80)90027-9},
}

 \bib{Jech2}{book}{
   author={Jech, Thomas},
   title={Set theory},
   series={Springer Monographs in Mathematics},
   note={The third millennium edition, revised and expanded},
   publisher={Springer-Verlag, Berlin},
   date={2003},
   pages={xiv+769},
   isbn={3-540-44085-2},
   review={\MR{1940513 (2004g:03071)}},
}
 \bib{SandL}{article}{
   author={Juh\'{a}sz, I.},
   title={A survey of $S$- and $L$-spaces},
   conference={
      title={Topology, Vol. II},
      address={Proc. Fourth Colloq., Budapest},
      date={1978},
   },
   book={
      series={Colloq. Math. Soc. J\'{a}nos Bolyai},
      volume={23},
      publisher={North-Holland, Amsterdam-New York},
   },
   date={1980},
   pages={675--688},
   review={\MR{588816}},
}
 
 \bib{Mitchell}{article}{
   author={Mitchell, William},
   title={Aronszajn trees and the independence of the transfer property},
   journal={Ann. Math. Logic},
   volume={5},
   date={1972/73},
   pages={21--46},
   issn={0003-4843},
   review={\MR{313057}},
   doi={10.1016/0003-4843(72)90017-4},
}
 
 \bib{Ostaszewski}{article}{
   author={Ostaszewski, A. J.},
   title={On countably compact, perfectly normal spaces},
   journal={J. London Math. Soc. (2)},
   volume={14},
   date={1976},
   number={3},
   pages={505--516},
   issn={0024-6107},
   review={\MR{438292}},
   doi={10.1112/jlms/s2-14.3.505},
}

 \bib{RudinS}{article}{
   author={Rudin, Mary Ellen},
   title={A normal hereditarily separable non-Lindel\"{o}f space},
   journal={Illinois J. Math.},
   volume={16},
   date={1972},
   pages={621--626},
   issn={0019-2082},
   review={\MR{309062}},
}
 
 \bib{properbook}{book}{
   author={Shelah, Saharon},
   title={Proper forcing},
   series={Lecture Notes in Mathematics},
   volume={940},
   publisher={Springer-Verlag, Berlin-New York},
   date={1982},
   pages={xxix+496},
   isbn={3-540-11593-5},
   review={\MR{675955}},
}

 \bib{stevoSspace}{article}{
   author={Todor\v{c}evi\'{c}, Stevo},
   title={Forcing positive partition relations},
   journal={Trans. Amer. Math. Soc.},
   volume={280},
   date={1983},
   number={2},
   pages={703--720},
   issn={0002-9947},
   review={\MR{716846}},
   doi={10.2307/1999642},
}

\end{biblist}
\end{bibdiv}

\end{document}